\documentclass[11pt]{article}
\usepackage[small]{titlesec}
\usepackage[cm]{fullpage}

\addtolength{\oddsidemargin}{1.6cm}
\addtolength{\evensidemargin}{2cm}
\addtolength{\textwidth}{-3.5cm} \addtolength{\topmargin}{1cm}
\addtolength{\textheight}{-2.5cm}



\usepackage{amsmath,amscd, float,rotating}
\usepackage{pb-diagram}

\usepackage{latexsym}
\usepackage{amsfonts}
\usepackage{amsmath}
\usepackage{amssymb}

\numberwithin{equation}{section}
\newcommand{\qed}{\hfill \ensuremath{\Box}}

\def\XXint#1#2#3{{\setbox0=\hbox{$#1{#2#3}{\int}$}
\vcenter{\hbox{$#2#3$}}\kern-.5\wd0}}

\newcommand{\tr}[2]{\textrm{tr}_{#1} \, {#2}}
\newcommand{\ve}{\varepsilon}

\newcommand{\nablaR}{\nabla_{\mathbb{R}}}

\newcommand{\dbar}{\overline{\partial}}

\newcommand{\ddt}[1]{\frac{\partial #1}{\partial t}}

\newcommand{\ov}[1]{\overline{#1}}

\newcommand{\ddbar}{\frac{\sqrt{-1}}{2\pi} \partial\dbar}
\newcommand{\geuc}{g_{\textrm{Eucl}}}
\newcommand{\oeuc}{\omega_{\textrm{Eucl}}}

\begin{document}
\newtheorem{claim}{Claim}
\newtheorem{theorem}{Theorem}[section]
\newtheorem{proposition}{Proposition}[section]
\newtheorem{lemma}{Lemma}[section]
\newtheorem{definition}{Definition}[section]
\newtheorem{conjecture}{Conjecture}[section]
\newtheorem{corollary}{Corollary}[section]
\newtheorem{remark}{Remark}[section]
\newenvironment{proof}[1][Proof]{\begin{trivlist}
\item[\hskip \labelsep {\bfseries #1}]}{\end{trivlist}}
\newenvironment{example}[1][Example]{\addtocounter{remark}{1} \begin{trivlist}
\item[\hskip \labelsep {\bfseries #1
\thesection.\theremark}]}{\end{trivlist}}
~

\centerline{\bf \Large Contracting exceptional divisors by the K\"ahler-Ricci flow\footnote{The first-named author is supported in part by an NSF CAREER grant 
  DMS-08-47524  and the second-named author by the grant DMS-08-48193.  Both authors are also supported in part by Sloan Research Fellowships.}}

\bigskip
\bigskip

\centerline{\large \bf Jian Song$^{*}$ and
Ben Weinkove$^\dagger$}


\bigskip
\bigskip
\noindent
{\bf Abstract} \ We give a criterion under which a solution $g(t)$ of the K\"ahler-Ricci flow contracts exceptional divisors on a compact manifold and can be  uniquely continued   on a new manifold.  As   $t$ tends to the singular time $T$ from each direction,   we prove convergence of $g(t)$ in the sense of Gromov-Hausdorff  and smooth convergence away from the exceptional divisors.  We call this behavior for the K\"ahler-Ricci flow a \emph{canonical surgical contraction}.  In particular, our results show that the K\"ahler-Ricci flow on a projective algebraic surface will perform a sequence of canonical surgical contractions until, in finite time, either the minimal model is obtained, or the volume of the manifold tends to zero.

\bigskip


\section{Introduction}

In his seminal work \cite{H1}, Hamilton introduced the Ricci flow  and showed that it smoothly deforms a metric with positive Ricci curvature on a 3-manifold to a metric of constant curvature. In general, the Ricci flow on a 3-manifold may develop singularities.  Hamilton conjectured that the flow would break the manifold into pieces at the singular times, and initiated a program of `Ricci flow with surgeries' \cite{H2}.  The idea is 
to perform a surgery `by hand' on the manifold just before a singular time $T$ and then restart the Ricci flow on the new manifold.  Ultimately the goal was to  
 prove Thurston's geometrization conjecture for 3-manifolds using the Ricci flow.  In 2002, Perelman's ground-breaking work \cite{P1} refined Hamilton's surgery process and, according to  \cite{CZ, KL, MT}, resolved all of the key difficulties in the  geometrization program.

The method of Ricci flow with surgery appears to be very powerful.  However,  the surgery process is not canonical.  It has been suggested (in, for example, \cite{P1})  that the Ricci flow should carry out surgeries through the singularities in some natural and unique way.  This is referred to as \emph{canonical surgery by the Ricci flow}.  As $t \rightarrow T$, does the flow $(X, g(t))$ converge in some suitable sense to a  `limit manifold'  $(\ov{X}, g_T)$ as $t$ tends to the singular time $T$?   Is there then a unique flow on the new manifold starting at $g_T$?

There has been much interest in studying this phenomenon in the case of K\"ahler manifolds (see for example \cite{FIK}).  The Ricci flow preserves the K\"ahler condition, and the additional structure in the K\"ahler setting makes the problem of formulating and understanding canonical surgery more tractable.  Moreover, if the manifold $X$ is algebraic then one could hope that the canonical surgeries correspond to algebraic operations, thus opening the way for the use of the Ricci flow to prove results in algebraic geometry.

In the 1980's,  Tsuji \cite{Ts} applied the  K\"ahler-Ricci flow on projective varieties with nef canonical bundle  to construct K\"ahler-Einstein metrics with singularities.  More recently,   the  singular 
behavior of the K\"ahler-Ricci flow has been investigated further \cite{TZha, Zha, SoT1, SoT2}.  Moreover, it has been conjectured  that a suitable notion of the K\"ahler-Ricci flow with surgery will carry out the minimal model program in algebraic geometry \cite{T2, LT, SoT3}.   That is, given an algebraic variety $X$, one hopes to obtain a unique model variety within its birational class, called the `minimal model', which is obtained from $X$ by certain algebraic operations (for some exciting recent developments, see \cite{BCHM, Si}).  Conjecturally, the K\"ahler-Ricci flow will carry out these algebraic operations in the form of canonical surgeries and eventually arrive at the minimal model.

A detailed program is laid out in [SoT3] for how the K\"ahler-Ricci flow will behave on a general projective algebraic variety.  More precisely, it is conjectured that the K\"ahler-Ricci flow will either
 deform a projective algebraic variety $X$ to its minimal model via finitely many divisorial contractions and flips in the Gromov-Hausdorff sense, then eventually converge to  a generalized K\"ahler-Einstein metric on the canonical model of $X$ (after normalization), or collapse in finite time. This is  the analytic analogue of Mori's minimal model program. The existence and uniqueness is proved in \cite{SoT3} for the weak solution of the K\"ahler-Ricci flow through divisorial contractions and flips.  However, the Gromov-Hausdorff convergence at the singular time is largely open.  The current paper establishes  the above conjecture when $X$ is a projective algebraic surface.  More generally, we deal with the case of blowing-down exceptional divisors by the K\"ahler-Ricci flow.

We illustrate with a special case  when the manifold $X$ is $\mathbb{P}^n$ blown up at one point.  This is  projective space with a point $y_0$ replaced by a subvariety $E$, the \emph{exceptional divisor}, which is biholomorphic to $\mathbb{P}^{n-1}$ and represents all of the directions through $y_0$.   Under an assumption on the initial K\"ahler class (see (\ref{a0b0}) below), Feldman-Ilmanen-Knopf  \cite{FIK} conjectured  that the K\"ahler-Ricci flow should `contract'  $E$ at the first singular time $T$ and that the manifolds should converge in the Gromov-Hausdorff sense to $\mathbb{P}^n$.  Moreover, they constructed self-similar solutions of the K\"ahler-Ricci flow which exhibit precisely this behavior.  The paper \cite{SW1} established some conjectures of \cite{FIK},  including this contracting of the exceptional divisor,  under a symmetry assumption which reduces the equation to a parabolic PDE in one space variable.   In the current paper we make no symmetry assumptions.  Before we state our results in full generality, we describe informally what we can prove in this special case of $\mathbb{P}^n$ blown up at one point.

Write $\pi: X \rightarrow \mathbb{P}^n$ for the blow-down map, which is an isomorphism from $X \setminus E$ to $\mathbb{P}^n \setminus \{ y_0 \}$.  As $t$ tends to the first singular time $T$, the metrics $g(t)$ converge smoothly on compact subsets of $X \setminus E$ to a smooth K\"ahler metric $g_T$, defined on $X \setminus E$ (this part was already known - see \cite{TZha} and also \cite{SW1}).  Using the map $\pi$,  $g_T$ defines a K\"ahler metric on $\mathbb{P}^n \setminus \{ y_0 \}$.  By extending this tensor to be zero at $y_0$, we  define a distance function $d_T$ on $\mathbb{P}^n$ and show that  the metric space $(\mathbb{P}^n, d_T)$  is homeomorphic to the manifold $\mathbb{P}^n$.
We prove that $(X, g(t))$ converges to $(\mathbb{P}^n, d_T)$ in the Gromov-Hausdorff sense as $t \rightarrow T^-$.  Next, using results of \cite{SoT3}, we show that there is a unique smooth solution $g(t)$ of the K\"ahler-Ricci flow on $\mathbb{P}^n$ for $t$ in $(T, T+\delta)$ for some $\delta>0$ such that $g(t)$ converges to $(\pi^{-1})^* g_T$ smoothly on compact subsets of $\mathbb{P}^n \setminus \{ y_0\}$ as $t \rightarrow T^+$.  Finally, we show that $(\mathbb{P}^n, g(t))$ converges to $(\mathbb{P}^n, d_T)$ in the Gromov-Hausdorff sense as $t \rightarrow T^+$.  Thus in both cases $t\rightarrow T^-$ and $t\rightarrow T^+$ we have smooth convergence away from the exceptional divisor as well as global Gromov-Hausdorff convergence.  We call this behavior for the K\"ahler-Ricci flow a  \emph{canonical surgical contraction}.   We now define this more precisely.

Let $(X, g_0)$ be a compact K\"ahler manifold of complex dimension $n \ge 2$. We write $\omega_0= \frac{\sqrt{-1}}{2\pi} (g_0)_{i \ov{j}}dz^i \wedge d\ov{z^j}$ for the K\"ahler form associated to $g_0$.  We consider the K\"ahler-Ricci flow $\omega=\omega(t)$ given by
\begin{equation} \label{krf0}
\frac{\partial}{\partial t} \omega = - \textrm{Ric}(\omega), \quad \omega|_{t=0} = \omega_0,
\end{equation}
for $ \textrm{Ric}(\omega) = - \ddbar \log \det g$, where $g=g(t)$ is the metric associated to $\omega$.  As long as the flow exists, the K\"ahler class of $\omega(t)$ is given by
\begin{equation}
[\omega(t)] = [\omega_0] + t  c_1(K_X) >0,
\end{equation}
where $K_X$ is the canonical bundle of $X$.   The first singular time  $T$ is characterized by
\begin{equation} \label{T}
T = \sup \{ t \in \mathbb{R} \ | \ [\omega_0] + t  c_1(K_X) >0 \}.
\end{equation}
Clearly the number $T$ depends  only on $X$ and the K\"ahler class $[\omega_0]$, and satisfies  $0 < T \le \infty$.
 It was shown in \cite{TZha} that there exists a smooth solution of the K\"ahler-Ricci flow (\ref{krf0}) for $t$ in $[0,T)$.  If $T$ is finite then the flow must develop a singularity as $t \rightarrow T^-$.

\begin{definition} \label{defn}
We say that the solution $g(t)$ of the K\"ahler-Ricci flow, as above, performs a  {\bf  canonical surgical contraction} if the following holds.  There exist distinct analytic subvarieties $D_1, \ldots, D_k$ of $X$ of codimension 1, a compact K\"ahler manifold $Y$ and a surjective holomorphic map $\pi: X \rightarrow Y$ with  $\pi(D_i) = y_i \in Y$ and
$\pi|_{X \setminus \bigcup_{i=1}^k D_i}$  a biholomorphism onto $Y \setminus \{ y_1, \ldots, y_k \}$ such that:
\begin{enumerate}
\item[(i)]  As $t \rightarrow T^-$,   the metrics $g(t)$ converge to a smooth K\"ahler metric $g_T$ on $X \setminus \bigcup_{i=1}^k D_i$ smoothly on compact subsets of $X \setminus \bigcup_{i=1}^k D_i.$
\item[(ii)]  Let $d_T$ be the metric on $Y$ defined by extending $(\pi^{-1})^*g_T$ to be zero on $\{ y_1, \ldots, y_k \}$ (see Definition \ref{defndy}).  Then $(Y, d_T)$ is a compact metric space homeomorphic to the manifold $Y$.
\item[(iii)] $(X, g(t))$ converges to $(Y, d_T)$ in the Gromov-Hausdorff sense as $t\rightarrow T^-$.
\item[(iv)] There exists a unique maximal smooth solution $g(t)$ of the K\"ahler-Ricci flow on $Y$ for $t\in (T, T_Y)$, with $T< T_Y \le \infty$, such that $g(t)$ converges to $(\pi^{-1})^*g_T$ as $t  \rightarrow T^+$ smoothly on compact subsets of $Y \setminus \{ y_1, \ldots, y_k\}$.  Moreover $g(t)$ for $t \in [0, T)$ is smoothly connected via $\pi$ to $g(t)$ for $t \in [T, T_Y)$ outside the points $y_1, \ldots, y_k$ (see Section \ref{sectionafterT} for a precise definition).
\item[(v)] $(Y, g(t))$ converges to $(Y, d_T)$ in the Gromov-Hausdorff sense as $t\rightarrow T^+$.
\end{enumerate}
\end{definition}

Definition \ref{defn} is a special case of the K\"ahler-Ricci flow with surgery discussed in \cite{SoT3} (see also \cite{FIK}).
Note that the notion of convergence in this definition is stronger than Gromov-Hausdorff convergence.  In particular, no diffeomorphisms are involved.  As an aside, we remark that the definition of canonical surgical contraction can be generalized to allow $X$ and $Y$ to have mild singularities, and further to the case when a flip occurs.

From Definition \ref{defn}, canonical surgical contractions are unique up to biholomorphisms if they exist.   More precisely, suppose a canonical surgical contraction  exists for a solution $g(t)$ of the K\"ahler-Ricci flow on $[0,T)$ with respect to $D_1, \ldots, D_k$, $Y$ and $\pi: X \rightarrow Y$,  and that  there also exists one with respect to $\tilde{D}_1, \ldots, \tilde{D}_{\tilde{k}}$, $\tilde{Y}$ and $\tilde{\pi}: X \rightarrow \tilde{Y}$.  Then $\tilde{k}=k$ and (after possibly reordering) $\tilde{D}_i=D_i$ for $i=1, \ldots, k$.  Moreover, there exists a biholomorphism $F : Y \rightarrow \tilde{Y}$ such that $\tilde{\pi} = F \circ \pi$.  For $t>T$ we have $g(t) = F^* \tilde{g}(t)$.

Our main result is:

\begin{theorem} \label{thmblowup} Let $g(t)$ be a smooth solution of the K\"ahler-Ricci flow (\ref{krf0}) on $X$ for $t$ in $[0,T)$ and  assume $T<\infty$. Suppose there exists a blow-down map $\pi:  X \rightarrow Y$ contracting disjoint irreducible exceptional divisors $E_1, \ldots, E_k$ on $X$ with $\pi(E_i) = y_i \in Y$, for a smooth compact K\"ahler manifold $(Y, \omega_Y)$   such that the limiting K\"ahler class satisfies
\begin{equation} \label{assumption1}
[\omega_0] + T c_1(K_X) = [\pi^*\omega_Y].
\end{equation}
Then the K\"ahler-Ricci flow $g(t)$ performs a canonical surgical contraction with respect to the data $E_1, \ldots, E_k$, $Y$ and $\pi$.
\end{theorem}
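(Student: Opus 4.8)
The plan is to recast the K\"ahler--Ricci flow as a parabolic complex Monge--Amp\`ere equation with a reference form that degenerates at $T$ precisely along $\bigcup_iE_i$, establish uniform a priori estimates, pass to the limit, and then control distances to obtain the Gromov--Hausdorff statements; the continuation past $T$ is obtained from \cite{SoT3}. Fix a smooth volume form $\Omega$ on $X$ and set $\hat{\omega}_t=\omega_0-t\,\ric(\Omega)$, so that $[\hat{\omega}_t]=[\omega(t)]$. Since $\hat{\omega}_t$ is affine in $t$ and $[\hat{\omega}_T]=[\pi^*\omega_Y]$ by (\ref{assumption1}), after an affine-in-$t$ change of the unknown $\varphi$ the flow (\ref{krf0}) is equivalent to $\partial_t\varphi=\log\frac{(\eta_t+\ddbar\varphi)^n}{\Omega}+h$, $\varphi(0)=0$, where $\eta_t=\frac{T-t}{T}\omega_0+\frac{t}{T}\pi^*\omega_Y$ is semi-positive on $[0,T]$ and degenerate at $t=T$ exactly along $\bigcup_iE_i$, $h\in C^\infty(X)$ is fixed, and $\omega(t)=\eta_t+\ddbar\varphi$. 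First I would prove a uniform bound $\|\varphi(t)\|_{C^0(X)}\le C$ on $[0,T)$ — using $c\le\int_X\omega(t)^n\le C$ together with a pluripotential (Ko{\l}odziej-type) or maximum-principle argument — and the bound $\partial_t\varphi\le C$ (maximum principle), which gives $\omega(t)^n\le C\Omega$ and shows $\varphi(t)-Ct$ is nonincreasing, so that $\varphi(t)$ decreases pointwise to some $\varphi_T\in L^\infty(X)$. A parabolic Chern--Lu (Schwarz lemma) inequality with respect to the fixed pullback metric $\pi^*\omega_Y$, which has bounded geometry since $\omega_Y$ is smooth K\"ahler on $Y$, yields $\pi^*\omega_Y\le C\,\omega(t)$ on all of $X$; on any compact $K\Subset X\setminus\bigcup_iE_i$ one gets in addition $\omega(t)\le C_K\,\pi^*\omega_Y$ and $c_K\Omega\le\omega(t)^n\le C\Omega$, so Evans--Krylov and Schauder estimates give uniform $C^\infty$ bounds on $K$. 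Hence $\omega(t)\to\omega_T:=\pi^*\omega_Y+\ddbar\varphi_T$ in $C^\infty_{\mathrm{loc}}(X\setminus\bigcup_iE_i)$ to a smooth K\"ahler metric $g_T$, which is (i) (cf.\ \cite{TZha,SW1}).

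Pushing $\omega_T$ forward by the biholomorphism $\pi|_{X\setminus\bigcup_iE_i}$ and using the uniform $C^0$ bound, $\varphi_T$ descends to a bounded $\omega_Y$-plurisubharmonic function on $Y$ that solves $(\omega_Y+\ddbar\varphi_T)^n=F\,\omega_Y^n$ with $0\le F\in L^p(Y,\omega_Y^n)$ for some $p>1$ (the $L^p$ integrability uses that the $E_i$ are genuinely exceptional, i.e.\ have positive discrepancy under $\pi$). By Ko{\l}odziej's theorem $\varphi_T$ is continuous on $Y$, and then Dini's theorem upgrades the decreasing convergence $\varphi(t)\to\varphi_T$ to uniform convergence on $X$. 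Using continuity of $\varphi_T$ and $F>0$ a.e., one checks that the Riemannian length structure of $(\pi^{-1})^*g_T$ on $Y\setminus\{y_1,\dots,y_k\}$ extends to a genuine distance function $d_T$ on $Y$ (Definition \ref{defndy}), that $(Y,d_T)$ is compact, and that the identity $Y\to(Y,d_T)$ is a homeomorphism; this is (ii). The delicate points here are a uniform diameter bound and the fact that $d_T$ separates points — in particular that each $y_i$ has positive $d_T$-distance to every other point of $Y$ — for which one uses a lower volume bound for $d_T$-balls together with local barriers near the $y_i$.

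For (iii) it suffices to show that the distortion of the maps $\pi\colon(X,g(t))\to(Y,d_T)$ tends to $0$, i.e.\ $\sup_{x,x'\in X}\big|d_{g(t)}(x,x')-d_T(\pi x,\pi x')\big|\to0$ as $t\to T^-$. The inequality $d_{g(t)}(x,x')\ge d_T(\pi x,\pi x')-o(1)$ follows from the smooth convergence on compact subsets of $X\setminus\bigcup_iE_i$, the global bound $\pi^*\omega_Y\le C\,\omega(t)$, and continuity of $d_T$ near the $y_i$ (so that the part of a $g(t)$-geodesic entering a small neighborhood of $\bigcup_iE_i$ only creates a negligible $d_T$-gap). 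The reverse inequality is the heart of the proof and reduces to showing that a fixed neighborhood $U$ of $\bigcup_iE_i$ collapses in $g(t)$: combining the uniform estimates with a local barrier near each $E_i$ built from the $\pi$-ample line bundle $\mathcal{O}_X(-\sum_j a_jE_j)$ for suitable $a_j>0$, one derives on $U$ a metric estimate of the schematic form $\omega(t)\le C\big(\pi^*\omega_Y+\varepsilon(t)\,\omega_0\big)$ with $\varepsilon(t)\to0$; since $\pi^*\omega_Y|_{E_i}=0$ this gives $\mathrm{diam}_{g(t)}(E_i)\to0$ and, more generally, that every point of $U$ is within $o(1)$ of $\bigcup_iE_i$ in $g(t)$-distance. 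A near-optimal $d_T$-path realizing $d_T(\pi x,\pi x')$ and avoiding the $y_i$ then lifts to $X\setminus\bigcup_iE_i$, has $g(t)$-length converging to its $g_T$-length, and its endpoints are joined to $x$ and $x'$ at cost $o(1)$ by the collapse estimate, which yields the upper bound and hence (iii).

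Finally, since $[\omega_Y]$ is K\"ahler, $[\omega_Y]+(t-T)c_1(K_Y)$ stays K\"ahler for $t\in[T,T_Y)$ with some $T_Y>T$, and the existence/uniqueness theory for the K\"ahler--Ricci flow with rough initial data in \cite{SoT3} provides a unique maximal smooth solution $g(t)$ on $Y$, $t\in(T,T_Y)$, with initial current $\omega_T$; parabolic smoothing away from $\{y_1,\dots,y_k\}$ gives $g(t)\to(\pi^{-1})^*g_T$ in $C^\infty_{\mathrm{loc}}(Y\setminus\{y_1,\dots,y_k\})$ as $t\to T^+$, and matching the two families of potentials across $t=T$ outside $\{y_1,\dots,y_k\}$ yields the ``smoothly connected via $\pi$'' statement, proving (iv). Statement (v) follows from the distance analysis of (iii) applied on $Y$: smooth convergence on $Y\setminus\{y_1,\dots,y_k\}$ together with the collapse of small metric balls around the $y_i$ as $t\to T^+$ (again via barriers and the uniform estimates, now on $Y$) gives $\sup|d_{g(t)}-d_T|\to0$. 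The main obstacle throughout is precisely the upper distance bound in (iii) and (v): one must extract a definite rate of collapse of the exceptional divisors and of shrinking tubular neighborhoods of them directly from the Monge--Amp\`ere equation and the relative ampleness of $-\sum_j a_jE_j$, since near $\bigcup_iE_i$ — where the metrics $g(t)$ blow up — there are no curvature or injectivity-radius bounds to rely on, and one must at the same time ensure that the complementary lower bounds do not cause distances away from the exceptional locus to collapse.
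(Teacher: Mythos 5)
Your overall skeleton (parabolic Monge--Amp\`ere formulation with degenerating reference forms, $C^0$ and $\dot\varphi$ bounds, the Schwarz lemma $\pi^*\omega_Y\le C\,\omega(t)$, local $C^\infty$ convergence giving (i), Ko{\l}odziej continuity of the pushed-down potential, continuation via \cite{SoT3}) matches the paper, but the two places you yourself flag as ``the heart'' are exactly where the proposal has genuine gaps. The collapse mechanism you propose --- a pointwise bound $\omega(t)\le C(\pi^*\omega_Y+\varepsilon(t)\,\omega_0)$ near $\bigcup_iE_i$ with $\varepsilon(t)\to0$, obtained from a barrier built out of $\mathcal{O}_X(-\sum_ja_jE_j)$ --- is unsubstantiated and much stronger than what maximum-principle/barrier arguments deliver. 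Such arguments give upper bounds that \emph{blow up} along $E$ (the paper proves $\omega\le C|s|_h^{-2}\pi^*\omega_Y$ and $\omega\le C|s|_h^{-2(1-\delta)}\omega_0$); they do not produce an upper bound whose restriction to $E$ tends to zero pointwise, which is what your schematic estimate asserts (since $\pi^*\omega_Y|_E=0$). The paper never proves such a pointwise statement. Instead it gets $\mathrm{diam}_{g(t)}E\le C(T-t)^{1/3}$ from the purely cohomological area bound $\int_E\omega(t)\le C(T-t)$ via an integral-geometric (length--area) selection of good paths inside thin rectangles in $E$, and it controls distances from nearby points to $E$, and across small tubular neighborhoods, by a separate estimate on the radial holomorphic vector field, $|V|^2_{\omega}\le C|s|_h$, which bounds radial path lengths by $C|x|^{1/2}$ uniformly in $t$. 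Without either a proof of your claimed estimate or substitutes of this kind, the small-diameter statement for $\pi^{-1}(D_{\delta_0})$, and hence (ii), (iii) and (v), are not established.

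The second gap is in (iv). Quoting \cite{SoT3} gives existence, uniqueness and smoothness of the continued flow for $t>T$ and continuity of the potential at $t=T$, but ``parabolic smoothing'' does not give smooth convergence of $g(t)$ to $(\pi^{-1})^*g_T$ on compact subsets of $Y\setminus\{y_1,\dots,y_k\}$ as $t\to T^+$, nor the smooth attachment across $t=T$ via $\pi$. For that the paper needs uniform higher-order estimates up to time $T$ that degenerate only like powers of $|s|_h^{-1}$ (bounds on $S$, on $|\nabla_{\mathbb R}^m\mathrm{Rm}(g)|$, and on all $g_0$-derivatives of $g(t)$), and these must be carried through the approximation scheme on $Y$: smoothed volume forms $\Omega_{\varepsilon}$, elliptic estimates for the approximate initial potentials $\psi_{T,\varepsilon}$ via Ko{\l}odziej's stability theorem, and then uniform parabolic estimates for $\omega_{\varepsilon}(t)$ on $[T,T']$ away from $y_0$. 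Your outline omits all of this, and it is precisely what makes the two flows ``smoothly connected via $\pi$'' rather than merely continuable. Likewise, for (v) the key upper bounds on $\omega(t)$ for $t\in[T,T']$ do not come from rerunning the $t<T$ barrier argument on $Y$; the paper derives them by solving auxiliary Monge--Amp\`ere flows back on $X$ and passing to the limit, and only then repeats the Gromov--Hausdorff argument.
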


This theorem generalizes some of the results in \cite{SW1} which considered the case of certain manifolds with symmetry.  A version of 
 Theorem \ref{thmblowup}, broadly interpreted, was conjectured by Feldman-Ilmanen-Knopf \cite{FIK}.
 
 \begin{remark} \label{remark1}
We conjecture that, in addition, $\pi|_{X\setminus \bigcup_{i=1}^k E_i}$ extends to an isometry from the metric completion of $(X \setminus \bigcup_{i=1}^k E_i, g_T)$ to $(Y, d_T)$, so that a stronger notion of canonical surgical contraction holds under the hypotheses of Theorem \ref{thmblowup}.   In an earlier version of this paper we erroneously claimed this stronger result.  We  are grateful to Yuguang Zhang for pointing out a gap in our previous argument.
\end{remark}
 
We  remark that  if the K\"ahler-Ricci flow $g(t)$ performs a canonical surgical contraction then there exists a uniform constant $C$ such that for $t \in [0,T)$,
\begin{equation}
\textrm{diam}_{g(t)} X \le C.
\end{equation}
Indeed, this is an immediate consequence of part (iii) of the definition.

Returning to the example of $\mathbb{P}^n$ blown up at one point, we apply our theorem to obtain:

\begin{example} \label{ex} Denote by $X$ the blow-up of $\mathbb{P}^n$ at one point $y_0$.  Let  $\pi: X \rightarrow \mathbb{P}^n$ be the blow-down map, which sends the exceptional divisor $E$ to $y_0 \in \mathbb{P}^n$.  K\"ahler classes on $X$ can be written as
\begin{equation}
\alpha = b\, \pi^*[H] - a [E], \qquad \textrm{for } 0<a<b,
\end{equation}
where $H$ is a hyperplane in $\mathbb{P}^n$.  We consider a solution of the K\"ahler-Ricci flow (\ref{krf0}) starting at $\omega_0$ in a K\"ahler class $\alpha_0 = b_0 \, \pi^*[H] - a_0 [E]$ where $a_0$ and $b_0$ satisfy the condition:
\begin{equation} \label{a0b0}
a_0(n+1) < b_0(n-1).
\end{equation}
One can compute that $T=a_0/(n-1)$ and
\begin{equation}
[\omega_0]+ T c_1(K_X) = [\kappa \pi^*\omega_{\textrm{FS}}], \qquad  \textrm{for }\  \kappa = \frac{(n-1)b_0-(n+1)a_0}{n-1} >0,
\end{equation}
where $\omega_{\textrm{FS}}$ is the Fubini-Study metric on $\mathbb{P}^n$.  Hence Theorem \ref{thmblowup} can be applied and the K\"ahler-Ricci flow performs a canonical surgical contraction at time $T$ with respect to $E$, $\mathbb{P}^n$ and $\pi$.
\end{example}

Once a canonical surgical contraction has been performed on $[0,T)$,
we have a unique smooth solution of the K\"ahler-Ricci flow on the manifold $Y$, which exists for a maximal time interval $(T, T_1)$ say.  Repeating this process, we can ask whether the flow performs a contraction on $Y$.

We define a \emph{maximal K\"ahler-Ricci flow with canonical surgical contractions} starting from $(X, \omega_0)$ to be a sequence of $k+1$ smooth solutions of the K\"ahler-Ricci flow, each written $g(t)$, on manifolds $X_0=X, X_1, X_2, \ldots, X_k$ on maximal time intervals $[0,T_0)$, $(T_0, T_1), \ldots, (T_{k-1}, T_k)$  such that $g(t)$ performs a canonical surgical contraction at times $T_0, T_1, \ldots, T_{k-1}$ but not at time $T_k$.  Note that we allow $T_k=\infty$ as a possibility.  By definition, if such a flow exists starting from $(X, \omega_0)$ then it is unique.

In the case when $X$ is an algebraic surface and $\omega_0$ is in a rational class, we see that such a maximal flow always exists and indeed can be used to give information about the original manifold $X$.  In this case, exceptional divisors are referred to as exceptional curves (or exceptional curves of the first kind).  For definitions of some more of the algebraic geometry terminology see Section \ref{sectionapply}.

\begin{theorem} \label{theoremclass}
Let $X$ be a projective algebraic surface and $\omega_0$ a K\"ahler metric with $[\omega_0] \in H^{1,1}(X, \mathbb{Q})$.  Then there exists a unique maximal K\"ahler-Ricci flow $\omega(t)$ on $X_0, X_1, \ldots, X_k$ with canonical surgical contractions  starting at $(X, \omega_0)$. Moreover, each canonical surgical contraction  corresponds to a blow-down $\pi: X_i \rightarrow X_{i+1}$ of a finite number of disjoint exceptional curves on $X_i$.  In addition we have:
\begin{enumerate}
\item[(i)]  Either $T_k < \infty$ and the flow $\omega(t)$ collapses $X_k$, in the sense that
$$\emph{Vol}_{\omega(t)} X_k \rightarrow 0, \quad \textrm{as} \ \ t \rightarrow T_k^-.$$
In this case $X_k$ is a Fano surface or a ruled surface.
\item[(ii)]  Or $T_k = \infty$ and $X_k$ has no exceptional curves of the first kind.
\end{enumerate}
\end{theorem}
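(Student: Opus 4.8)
The plan is to run the flow one stage at a time, applying Theorem~\ref{thmblowup} at each singular time and using classical surface geometry to decide when the process stops and what the terminal surface looks like. Set $X_0=X$, $\beta_{-1}=[\omega_0]$, $T_{-1}=0$, and suppose inductively that we have reached a smooth projective surface $X_i$ carrying a rational K\"ahler class $\beta_{i-1}$, at which the K\"ahler--Ricci flow restarts at $t=T_{i-1}$. By Tian--Zhang the flow has a smooth solution on $[T_{i-1},T_i)$ with $T_i=\sup\{t:\beta_{i-1}+(t-T_{i-1})c_1(K_{X_i})>0\}$, and the limiting class $\alpha_i:=\beta_{i-1}+(T_i-T_{i-1})c_1(K_{X_i})=\lim_{t\to T_i^-}[\omega(t)]$ is nef, being a limit of K\"ahler classes. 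There are three mutually exclusive cases: (a) $T_i=\infty$; (b) $T_i<\infty$ and $\alpha_i^2=0$; (c) $T_i<\infty$ and $\alpha_i^2>0$. In cases (a),(b) I claim the maximal flow has terminated, so $k=i$; in case (c) I produce a canonical surgical contraction and pass to $X_{i+1}$.

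In case (c), $\alpha_i$ is nef, not K\"ahler (since $T_i$ is the singular time), with $\alpha_i^2>0$, so by the Nakai--Moishezon criterion there is an irreducible curve $C$ with $\alpha_i\cdot C=0$. By the Hodge index theorem the intersection form is negative definite on $\alpha_i^\perp$, so $C^2<0$; writing $\alpha_i=\beta_{i-1}+s\,c_1(K_{X_i})$ with $s=T_i-T_{i-1}>0$ and using $\beta_{i-1}\cdot C>0$ gives $K_{X_i}\cdot C<0$; the adjunction formula $2p_a(C)-2=C^2+K_{X_i}\cdot C$ then forces $C^2=K_{X_i}\cdot C=-1$ and $p_a(C)=0$, i.e. $C$ is a $(-1)$-curve. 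Any two such curves are disjoint (Hodge index applied to $C_1+C_2\in\alpha_i^\perp$ gives $C_1\cdot C_2=0$) and there are finitely many of them. Castelnuovo's criterion contracts them via $\pi_i:X_i\to X_{i+1}$ with $X_{i+1}$ smooth projective, and $\alpha_i=\pi_i^*\beta_i$ where $\beta_i$ has positive self-intersection and is positive on every curve of $X_{i+1}$ (the only $\alpha_i$-null curves are the contracted ones), hence is ample by Nakai--Moishezon, hence represented by a K\"ahler form $\omega_{X_{i+1}}$; moreover $\beta_i$ is rational, since in case (c) the flow leaves the K\"ahler cone through a curve wall, making $T_i$ rational. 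After a time translation the flow on $X_i$ then satisfies hypothesis (\ref{assumption1}) with $Y=X_{i+1}$ and $\pi=\pi_i$, so Theorem~\ref{thmblowup} yields a canonical surgical contraction at $T_i$, realized as a blow-down of finitely many disjoint exceptional curves, together with the unique continuation on $X_{i+1}$. This completes the inductive step.

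Each application of case (c) strictly decreases the second Betti number, so it occurs only finitely often and the process terminates at some $X_k$ in case (a) or (b). In case (a), $\tfrac1t[\omega(t)]\to c_1(K_{X_k})$ through K\"ahler classes, so $K_{X_k}$ is nef; a $(-1)$-curve $C$ would give $K_{X_k}\cdot C=-1<0$, so $X_k$ has no exceptional curve of the first kind, which is alternative (ii). In case (b), $\mathrm{Vol}_{\omega(t)}X_k=\tfrac12[\omega(t)]^2\to\tfrac12\alpha_k^2=0$, so the flow collapses $X_k$ (and conversely, if $\alpha_k^2>0$ we are in case (c)); this is alternative (i), and it remains to identify $X_k$. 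If $\alpha_k=0$, then $-c_1(K_{X_k})$ is a positive multiple of the K\"ahler class $\beta_{k-1}$, so $X_k$ is Fano. If $\alpha_k\neq0$, write $\alpha_k=\beta_{k-1}+s\,c_1(K_{X_k})$ with $s>0$; since $\alpha_k$ is a nonzero nef class on a projective surface, $\beta_{k-1}\cdot\alpha_k>0$ (the form is negative definite on $\beta_{k-1}^\perp$), hence $c_1(K_{X_k})\cdot\alpha_k=\tfrac1s(\alpha_k^2-\beta_{k-1}\cdot\alpha_k)=-\tfrac1s\,\beta_{k-1}\cdot\alpha_k<0$. As $\alpha_k$ is nef, $K_{X_k}$ cannot be pseudo-effective, so by the Enriques classification $X_k$ is uniruled, that is, rational or ruled. (When $\alpha_k$ is rational it is semiample, and the induced morphism $X_k\to B$ onto a curve has general fibre $F$ with $\alpha_k\cdot F=0$, whence $K_{X_k}\cdot F<0$ and $F\cong\mathbb{P}^1$ by adjunction, an honest ruling.) Finally, uniqueness of the whole maximal flow follows from uniqueness of the K\"ahler--Ricci flow solution on each $X_i$ together with the uniqueness up to biholomorphism of canonical surgical contractions recorded after Definition~\ref{defn}.

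The analytic content --- convergence at each singular time and unique continuation on the new manifold --- is entirely supplied by Theorem~\ref{thmblowup}, so the work specific to this theorem is the algebro-geometric bookkeeping at the singular times. I expect the main point to be the analysis of the limiting class $\alpha_i$ at each finite singular time: showing that its null curves are precisely disjoint $(-1)$-curves, that the induced contraction is to a \emph{smooth} surface with ample quotient class so that (\ref{assumption1}) is met, and --- for the terminal dichotomy --- that the combination $\alpha_k^2=0$ and $c_1(K_{X_k})\cdot\alpha_k<0$ pins $X_k$ down via the classification of surfaces. A minor subtlety is that the final exit time $T_k$ may be irrational, when the flow leaves the K\"ahler cone through $\{\alpha^2=0\}$ rather than a curve wall; but this happens only at the terminal, non-contracting stage and does not affect the inductive construction.
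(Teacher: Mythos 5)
Your proposal is correct in substance and shares the paper's overall skeleton (induct on stages, show the curves annihilated by the limiting nef class are disjoint $(-1)$-curves via the Hodge index theorem and adjunction, contract, apply Theorem~\ref{thmblowup}, terminate because $b_2$ drops), but the algebro-geometric engine is genuinely different. The paper works with the line bundle $L+\tilde{T}K_X$ and imports minimal model machinery: the Rationality Theorem of Kawamata--Shokurov gives rationality of the singular time at every stage (including the collapsing one), the Base-Point-Free Theorem gives semi-ampleness of $M=r(L+TK_X)$, and the associated morphism (Theorem 2.1.27 of [L]) simultaneously produces the blow-down when $M$ is big and, when it is not, the map to a point or a smooth curve that exhibits $X_k$ as Fano or ruled. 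You instead analyze the nef limit class directly: Nakai--Moishezon plus Hodge index plus adjunction locate the disjoint $(-1)$-curves, Castelnuovo's criterion contracts them, the wall equation $\beta_{i-1}\cdot C+(T_i-T_{i-1})\,K_{X_i}\cdot C=0$ gives rationality of $T_i$ and hence ampleness of the pushed-down class by classical Nakai--Moishezon, so hypothesis (\ref{assumption1}) holds; in the collapsing case you use $c_1(K_{X_k})\cdot\alpha_k<0$ and nefness of $\alpha_k$ to rule out pseudo-effectivity of $K_{X_k}$ and invoke the Enriques classification. This buys an essentially MMP-free treatment of the contraction step, at the cost of two places where you should cite precisely: first, before $T_i$ is known to be rational, $\alpha_i$ is only an $\mathbb{R}$-divisor class, so producing the null curve requires the Nakai--Moishezon criterion for $\mathbb{R}$-divisors (Campana--Peternell, or Buchdahl--Lamari on surfaces), not the classical integral version; second, your unconditional conclusion in the collapsing case is ``$\kappa(X_k)=-\infty$, hence rational or birationally ruled,'' which yields the stated dichotomy ``Fano or ruled'' only after the standard classification fact that such a surface is either $\mathbb{P}^2$ (which is Fano) or admits a ruling over a curve --- alternatively one gets the honest ruling, as the paper does, from rationality of $T_k$ (Rationality Theorem) together with the Base-Point-Free Theorem, which is also the justification missing behind your parenthetical ``semiample'' claim. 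Neither point is a gap --- both are standard, citable results --- but they are exactly the inputs the paper's heavier machinery supplies automatically.
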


Observe that in case (i), by well-known results in algebraic geometry, $X$ has negative Kodaira dimension,  and thus is either $\mathbb{P}^2$ or a ruled surface. 

It was shown in \cite{So} that the K\"ahler-Ricci flow shrinks to a point if and only if $X$ is Fano and the initial K\"ahler class is proportional to $c_1(X)$, establishing a special case of a conjecture in \cite{T2}.  If this occurs, it is natural to renormalize the flow so that the volume is constant.  
The problem of how  this normalized  flow behaves  is of great interest and is still open in general.  Assuming the existence of a K\"ahler-Einstein metric \cite{P2,TZhu} or soliton \cite{TZhu}, the flow has been shown to converge to a K\"ahler-Einstein metric or soliton respectively (see also \cite{SeT, Zhu}).  The existence of K\"ahler-Einstein metrics  on Fano manifolds is conjecturally related to stability \cite{Y2,T2,Do}.  The connection between stability conditions and the behavior of the K\"ahler-Ricci flow has been studied in \cite{PS, PSSW1, PSSW2, R, Sz, To, MS, CW} for example.

In case (ii),  $X$ must have   nonnegative Kodaira dimension (see Remark \ref{negkod}.1). Thus $X_k$ is the minimal model of $X$ (the unique algebraic surface birational to $X$ which has no exceptional curves of the first kind).  We remark that although the final minimal surface is unique, the order of the canonical surgical contractions  does depend on the initial K\"ahler class. 

 In fact, we can say more in case (ii).  There are  three distinct behaviors of the K\"ahler-Ricci flow as $t \rightarrow \infty$ depending on whether 
 whether $X$ has Kodaira dimension, written $\textrm{Kod}(X)$, equal to 0, 1 or 2:
\begin{itemize}
\item if $\textrm{Kod}(X)=0$, then the minimal model of $X$ is a Calabi-Yau surface. The flow $g(t)$ converges smoothly to a Ricci-flat K\"ahler metric as $t\rightarrow \infty$ \cite{Y1, Cao}.
\item If $\textrm{Kod}(X)=1$, then 
 $\frac{1}{t} \omega(t)$ converges in the sense of currents   to the pullback of the unique generalized K\"ahler-Einstein metric on the canonical model     of $X$ as $t\rightarrow \infty$ \cite{SoT1}.
\item If $\textrm{Kod}(X)=2$, $\frac{1}{t} \omega(t)$ converges in the sense of currents (and smoothly outside a subvariety) to the pullback of the unique smooth orbifold K\"ahler-Einstein metric on the canonical model of $X$ as $t\rightarrow \infty$ \cite{Ts, TZha}. 
\end{itemize}
Hence the K\"ahler-Ricci flow appears to give a rough classification of algebraic surfaces.   For precise definitions and futher details we refer the readers to \cite{T2} and the papers listed above. 

To generalize these kinds of results to higher dimensions requires an understanding of the behavior of the K\"ahler-Ricci flow through a divisorial flip.  In the work of La Nave-Tian \cite{LT} the conjectural behavior of the flow through a flip is discussed in relation to their  V-soliton equation\footnote{The authors have been informed by G. Tian that he and G. La Nave are currently writing up some further results on the V-soliton equation which can treat the case of Theorem \ref{thmblowup}.}.

We remark that our techniques can also be used in some other situations, once we extend our definitions to deal with possible orbifold singularities.
  Using the notation of \cite{SW1}, let $X$ be the manifold $M_{n,k}$  with  $1 \le k \le n-1$. The manifolds $M_{n,k}$ are $\mathbb{P}^1$ bundles over $\mathbb{P}^{n-1}$ which generalize the Hirzebruch surfaces $M_{2,k}$ (see \cite{C1}, for example).   Under a necessary condition on the initial K\"ahler class,  the K\"ahler-Ricci flow will perform a canonical surgical contraction of the `zero' divisor $D_0$ in $M_{n,k}$ (if $k=1$ this coincides with Example \ref{ex}.1, with $D_0$ the exceptional divisor).    
If the initial metric satisfies a certain symmetry condition the behavior of the flow as $t \rightarrow T^-$ was dealt with in  \cite{SW1}.

   In addition, our methods  can be used to show that if $X$ is a minimal surface of general type with  one irreducible $(-2)$ curve then the solution $(X, g(t))$ of the normalized K\"ahler-Ricci flow will converge in the Gromov-Hausdorff sense to the canonical model of $X$ with its K\"ahler-Einstein metric.  These results will appear in a forthcoming work.

We now give a brief outline of the current paper.  In Section \ref{sectex} we prove the main estimates for the metric $g(t)$ as $t\rightarrow T^-$ along the flow under the assumptions of Theorem \ref{thmblowup}.  In Section \ref{sectionGH} we prove the Gromov-Hausdorff convergence of $(X,g(t))$ as $t \rightarrow T^-$, establishing (ii) and (iii) in the definition of canonical surgical contraction.  In Section \ref{sectionhigher} we prove higher order estimates for $g(t)$ as $t \rightarrow T^-$, which are needed for continuing the flow after time $T$.  In Sections
 \ref{sectionafterT} and \ref{sectionproof} we complete the proof of Theorem \ref{thmblowup}.  Finally, in Section \ref{sectionapply} we prove Theorem \ref{theoremclass}.

A remark about notation:  we will often use $C$ or $C'$ to denote a uniform constant, which may differ from line to line.

\setcounter{equation}{0}
\section{Key estimates} \label{sectex}

In this section we prove our main estimates for the K\"ahler-Ricci flow under the assumptions of Theorem \ref{thmblowup}. 
Since the $E_1, \ldots, E_k$ do not intersect each other, we may assume for simplicity that $k=1$ (it will be clear that the arguments below generalize to the case of $k$ non-intersecting divisors).
Thus the map $\pi$  contracts exactly one exceptional divisor, which we denote by $E$.  We write the image of $E$ under $\pi$ as $y_0: =\pi(E) \in Y$.

We write the K\"ahler-Ricci flow (\ref{krf0}) as a parabolic complex Monge-Amp\`ere equation.  First, using assumption (\ref{assumption1}),  define a family of reference metrics $\hat{\omega}_t$ for $t \in [0,T)$ by
\begin{equation}
\hat{\omega}_t = \frac{1}{T} \left( (T-t) \omega_0 + t \pi^* \omega_Y \right) \in [\omega(t)] =  [\omega_0]+t c_1(K_X).
\end{equation}
 Now let $\Omega$ be the unique volume form on $X$ satisfying
 \begin{equation}
 \frac{\sqrt{-1}}{2\pi} \partial \ov{\partial} \log \Omega = \ddt{} \hat{\omega}_t = \frac{1}{T}( \pi^* {\omega}_Y - \omega_0) \in c_1(K_X), \quad \int_X \Omega =1.
 \end{equation}

It follows that if $\varphi=\varphi(t)$ solves the parabolic complex Monge-Amp\`ere equation
\begin{equation} \label{KRF2}
\ddt{\varphi} = \log \frac{ (\hat{\omega}_t + \frac{\sqrt{-1}}{2\pi} \partial \ov{\partial} \varphi )^n}{\Omega}, \quad \varphi|_{t=0} =0,
\end{equation}
for $t$ in $[0,T)$ then $\omega = \hat{\omega}_t + \frac{\sqrt{-1}}{2\pi} \partial \ov{\partial} \varphi$ solves (\ref{krf0}).  Conversely, given the solution $\omega=\omega(t)$ of $(\ref{krf0})$ on $[0,T)$ one can obtain, after normalization, the unique solution $\varphi=\varphi(t)$ of (\ref{KRF2}).

In the following, we assume the hypotheses of Theorem \ref{thmblowup}.  The estimates of the first lemma are essentially contained in \cite{TZha, Zha} (see also \cite{SW1}), but we give here a somewhat more elementary proof (see also \cite{T2}).

\begin{lemma} \label{Linfinity}
There is a uniform constant $C$ depending only on $(X, \omega_0)$  such that the solution $\varphi=\varphi(t)$ of (\ref{KRF2}) satisfies, for $t \in [0,T)$,
\begin{enumerate}
\item[(i)] $\displaystyle{\| \varphi \|_{L^{\infty}} \le C.}$
\item[(ii)] $\displaystyle{ \dot{\varphi} \le C},$ where we are writing $\dot{\varphi}$ for  $\partial \varphi/\partial t$.
\item[(iii)] $\displaystyle{ \omega^n \le C \Omega}$.
\item[(iv)] As $t \rightarrow T^-$, $\varphi(t)$ converges pointwise on $X$ to a bounded function $\varphi_T$ satisfying
\begin{equation}
\omega_T:= \hat{\omega}_T + \frac{\sqrt{-1}}{2\pi} \partial \ov{\partial} \varphi_T \ge 0,
\end{equation}
and $\omega(t)$ converges weakly in the sense of currents to the closed positive (1,1) current $\omega_T$.
\end{enumerate}
\end{lemma}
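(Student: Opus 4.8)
The plan is to run the standard maximum principle machinery for the parabolic complex Monge-Amp\`ere equation (\ref{KRF2}), exploiting that the reference forms $\hat\omega_t$ stay semipositive and converge to the (semipositive) pullback $\pi^*\omega_Y$ at time $T$. First, for (i), I would bound $\varphi$ from above and below separately. For the upper bound, note $\hat\omega_t \le C\omega_0$ uniformly on $[0,T)$ (since $T<\infty$ and $\hat\omega_t$ is an affine combination of two fixed forms), so by the comparison/maximum principle applied to the quantity $\varphi - At$ for suitable $A$, or directly via the Aubin-Yau $C^0$-estimate with parameter, one gets $\varphi \le C$. Actually the cleanest route: at a maximum of $\varphi$ in space, $\hat\omega_t + \ddbar\varphi \le \hat\omega_t \le C\omega_0$, hence $\dot\varphi = \log(\omega^n/\Omega) \le \log(C\omega_0^n/\Omega) \le C'$ there, giving $\frac{d}{dt}\max_X\varphi \le C'$ and so $\max_X\varphi \le C$ on $[0,T)$ since $T<\infty$; this simultaneously yields (ii) once combined with the next step. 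For the lower bound on $\varphi$, one uses that $\int_X\Omega = 1$ and $\int_X \omega_t^n = \int_X \hat\omega_t^n$ is uniformly bounded below away from $0$ — wait, it does tend to $\int_X(\pi^*\omega_Y)^n > 0$, so it is bounded below — and applies a Jensen-type or Tsuji/Zhang lower barrier argument (comparison with the solution of the complex Monge-Amp\`ere equation $(\hat\omega_T + \ddbar\psi)^n = c\,\Omega$, which has a bounded solution by Ko\l odziej's theorem since $\pi^*\omega_Y$ is big and semipositive). Alternatively, a direct minimum-principle argument: at a spatial minimum of $\varphi$, $\omega \ge \hat\omega_t$, so $\dot\varphi \ge \log(\hat\omega_t^n/\Omega)$, and the right side is integrable-in-$t$ trouble only where $\hat\omega_t^n$ degenerates, i.e. near $E$; here one inserts a fixed quasi-psh barrier with logarithmic pole along $E$ to absorb the degeneration.

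For (ii), the bound $\dot\varphi \le C$: differentiate (\ref{KRF2}) in $t$ to get $\frac{\partial}{\partial t}\dot\varphi = \Delta_\omega \dot\varphi + \mathrm{tr}_\omega(\dot{\hat\omega}_t)$, where $\dot{\hat\omega}_t = \frac1T(\pi^*\omega_Y - \omega_0)$. Since $\mathrm{tr}_\omega(\pi^*\omega_Y) \ge 0$, we get $\frac{\partial}{\partial t}\dot\varphi \le \Delta_\omega\dot\varphi + \frac1T\,\mathrm{tr}_\omega(-\omega_0) \le \Delta_\omega\dot\varphi$, so by the maximum principle $\dot\varphi$ is bounded above by $\max_X\dot\varphi|_{t=0} = \max_X\log(\omega_0^n/\Omega) \le C$. (A small technical point: one may need to consider $(T-t)\dot\varphi + \varphi + nt$ or a similar combination, as in Tian--Zhang, to handle the behavior near $t=0$ or to get a cleaner statement; but the essential mechanism is $\mathrm{tr}_\omega(\pi^*\omega_Y)\ge 0$.)

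For (iii), from (ii) we have $\dot\varphi \le C$, i.e. $\log(\omega^n/\Omega) \le C$, which is exactly $\omega^n \le C\Omega$. For (iv): from (ii), $\varphi$ is decreasing up to a linear-in-$t$ correction — more precisely, $\frac{\partial}{\partial t}((T-t)\dot\varphi + \varphi + \text{something})$ has a sign, or one argues directly that $t \mapsto \varphi(t) + Ct$ is monotone for suitable $C$ (using $\dot\varphi \le C$) — hence $\varphi(t)$ has a pointwise limit $\varphi_T$ as $t\to T^-$, and $\varphi_T$ is bounded by (i). The semipositivity $\omega_T = \hat\omega_T + \ddbar\varphi_T \ge 0$ follows because each $\omega(t) = \hat\omega_t + \ddbar\varphi(t) \ge 0$ is a positive current and $\hat\omega_t \to \hat\omega_T = \pi^*\omega_Y$ smoothly, so $\ddbar\varphi(t) \to \ddbar\varphi_T$ in the sense of currents (distributional convergence of $\varphi(t)\to\varphi_T$, which holds by dominated convergence using the uniform bound (i)), giving $\omega_T \ge 0$ as a limit of positive currents; the weak convergence $\omega(t)\rightharpoonup\omega_T$ is the same statement.

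The main obstacle is the lower bound on $\varphi$ in (i) (equivalently, showing the limit $\varphi_T$ is bounded below rather than merely $\ge -\infty$): the reference metric $\hat\omega_T = \pi^*\omega_Y$ degenerates along $E$, so the naive minimum principle loses control there, and one genuinely needs either Ko\l odziej-type pluripotential estimates for the degenerate Monge-Amp\`ere equation on $(X,\pi^*\omega_Y)$ — using that $[\pi^*\omega_Y]$ is big (its top self-intersection equals $\int_Y\omega_Y^n>0$) and nef — or an explicit quasi-psh barrier adapted to the exceptional divisor. I expect the paper to invoke the results of \cite{TZha, Zha} (or reprove them elementarily as promised in the text) precisely at this point.
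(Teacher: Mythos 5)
Your treatment of the upper bound in (i), of (iii), and of (iv) is fine and matches the paper, but there are two genuine problems. First, your direct argument for (ii) has a sign error: from
\begin{equation*}
\Big(\frac{\partial}{\partial t}-\Delta\Big)\dot{\varphi}
=\frac{1}{T}\,\mathrm{tr}_{\omega}(\pi^{*}\omega_{Y})-\frac{1}{T}\,\mathrm{tr}_{\omega}(\omega_{0}),
\end{equation*}
you cannot discard the \emph{nonnegative} term $\frac{1}{T}\mathrm{tr}_{\omega}(\pi^{*}\omega_{Y})$ and conclude $(\partial_{t}-\Delta)\dot{\varphi}\le 0$; dropping a nonnegative term on the right reverses the inequality, and a priori $\mathrm{tr}_{\omega}(\pi^{*}\omega_{Y})$ can be large (controlling it is exactly the content of the later Schwarz-lemma estimate). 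So ``$\mathrm{tr}_{\omega}(\pi^{*}\omega_{Y})\ge 0$'' is not the mechanism for the \emph{upper} bound on $\dot\varphi$. The paper instead applies the maximum principle to $t\dot{\varphi}-\varphi-nt$, for which the $\eta$-terms cancel identically and one gets $(\partial_{t}-\Delta)(t\dot{\varphi}-\varphi-nt)=-\mathrm{tr}_{\omega}\,\omega_{0}\le 0$, whence $t\dot{\varphi}\le\varphi+nt\le C$ by part (i). Note that your fallback quantity $(T-t)\dot{\varphi}+\varphi+nt$ satisfies $(\partial_{t}-\Delta)(\cdot)=\mathrm{tr}_{\omega}(\pi^{*}\omega_{Y})\ge 0$ and therefore produces a \emph{lower} bound on $\dot{\varphi}$, not the upper bound claimed in (ii).

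Second, the lower bound in (i), which you single out as ``the main obstacle'' requiring Ko{\l}odziej-type pluripotential estimates or a quasi-psh barrier with a pole along $E$, is in fact elementary, and your proposed barrier route would not give the stated estimate. The key observation you missed is that for $t<T$ the degeneration of $\hat{\omega}_{t}$ is purely in time, not in space: since $\pi^{*}\omega_{Y}\ge 0$,
\begin{equation*}
\hat{\omega}_{t}\ \ge\ \frac{T-t}{T}\,\omega_{0},\qquad\text{so}\qquad
\hat{\omega}_{t}^{\,n}\ \ge\ c_{0}(T-t)^{n}\,\Omega\quad\text{uniformly on }X.
\end{equation*}
Applying the minimum principle to $\theta=\varphi+n(T-t)\bigl(\log(T-t)-1\bigr)-(\log c_{0}-1)t$ (whose time-barrier absorbs the integrable divergence $n\log(T-t)$) gives $\varphi\ge -C$ with no input from the geometry of $E$ whatsoever; this is the paper's ``different maximum principle argument.'' By contrast, a barrier of the form $\epsilon\log|s|_{h}^{2}$ forces the minimum away from $E$ but only yields $\varphi\ge -C+\epsilon\log|s|_{h}^{2}$, which is not an $L^{\infty}$ bound, and the comparison-with-Ko{\l}odziej route is precisely the Tian--Zhang argument the paper explicitly set out to replace, requiring the deep generalization of Ko{\l}odziej to the degenerate class $[\pi^{*}\omega_{Y}]$ rather than anything you verify yourself. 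As written, your proposal leaves the lower bound of (i) (and hence the boundedness of $\varphi_{T}$ in (iv)) unproved.
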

\begin{proof}
Two proofs of the uniform bound of $\varphi$ are given in \cite{TZha}  (see also \cite{Zha}).  The first uses a generalization of the deep result of Kolodziej \cite{Kol1} and the second uses the maximum principle.  We give here a different maximum principle argument.
 For the upper bound of $\varphi$, observe that there exists a uniform constant $C_0>0$ such that
$$\hat{\omega}_t^n \le C_0 \Omega.$$
Define  $\psi=\varphi-(\log C_0+1)t$ and suppose that there exists $(x_0, t_0) \in X \times (0,T)$ with
$\psi(x_0, t_0) = \sup_{X \times [0,t_0]} \psi.$
At $(x_0,t_0)$ we have $\sqrt{-1} \partial \ov{\partial}\varphi \le 0$ and
\begin{eqnarray*}
0 \le \ddt{\psi} \le \log \frac{\hat{\omega}_t^n}{\Omega} - \log C_0 -1 \le -1,
\end{eqnarray*}
a contradiction.  Hence $\psi$ attains its maximum at $t=0$ and it follows that $\psi$ and thus $\varphi$ are uniformly bounded from above for $t \in [0,T)$.

For the lower bound of $\varphi$ observe  that since $\pi^* \omega_Y \ge 0$ we have
$$\hat{\omega}_t^n \ge \frac{(T-t)^n}{T^n} \omega_0^n \ge c_0 (T-t)^n \Omega,$$
for a uniform constant $c_0>0$.  Now compute the evolution of the function $$\theta = \varphi + n(T-t)(\log (T-t)-1) - (\log c_0 - 1)t.$$
If there exists a point $(x_0, t_0) \in X \times (0,T)$ with $\theta(x_0, t_0) = \inf_{X \times [0,t_0]} \theta$ we have
\begin{eqnarray*}
0 \ge \ddt{\theta} \ge \log \frac{\hat{\omega}_t^n}{\Omega} - n \log (T-t) - \log c_0 +1 \ge 1,
\end{eqnarray*}
a contradiction.  Hence there exists a uniform constant $C$ with
$\theta \ge -C$.  Since the function $(T-t) \log (T-t)$ is uniformly bounded for $t\in [0,T)$ this shows that $\varphi$ is bounded from below and completes the proof of (i).

For (ii), we use the argument of \cite{TZha, Zha} (see also Theorem 3.2 in \cite{SW1}).  Writing $\eta = \frac{1}{T}(\pi^*\omega_Y - \omega_0)$ and $\Delta$ for the Laplacian with respect to $\omega$ we see that
 \begin{equation}
 (\ddt{} - \Delta) \dot{\varphi} = \tr{\omega}{\eta},
 \end{equation}
 where by definition $\tr{\omega}{\eta}= n (\eta \wedge \omega^{n-1})/ \omega^n$.
 Then
 \begin{equation}
 (\ddt{} - \Delta)(t \dot{\varphi}- \varphi -nt) =  - \textrm{tr}_{\omega} \omega_0 \le 0,
 \end{equation}
 using the fact that $\Delta \varphi = n- \textrm{tr}_{\omega}(\omega_0 + t \eta)$.
 It follows from the maximum principle that $t \dot{\varphi}$ is uniformly bounded from above.    Hence $\dot{\varphi}$ is uniformly bounded from above.

 The inequality (iii) follows immediately from (ii).  Part (iv) follows from (i) and (ii).
 \qed
\end{proof}

We can now show that we have $C^{\infty}$ estimates on $\omega(t)$ on compact subsets of $X \setminus E$   thus establishing part (i) in the definition of canonical surgical contraction.  This result is contained in \cite{TZha, Zha} (cf. \cite{Ts}), but we include a proof for the reader's convenience.

\begin{lemma} \label{lemmack}  With the assumptions of Theorem \ref{thmblowup}, the solution $\omega=\omega(t)$ of the K\"ahler-Ricci flow satisfies the following.
\begin{enumerate}
\item[(i)] There exists a uniform constant $c>0$ such that
\begin{equation} \label{uniformmetric}
\omega \ge c \pi^* \omega_Y.
\end{equation}
\item[(ii)] For every compact set $K \subset X \setminus E$, there exist constants $C_{K,i}$ for $i=0, 1, 2, \ldots$, such that
\begin{equation}
\| \omega \|_{C^i(K)} \le C_{K,i}.
\end{equation}
\item[(iii)]  The closed (1,1) current $\omega_T$, given by Lemma \ref{Linfinity}, is a smooth K\"ahler form on $X \setminus E$.
\item[(iv)]  As $t \rightarrow T^-$, the metrics $\omega(t)$ converge to $\omega_T$ in $C^{\infty}$ on compact subsets of $X \setminus E$.
\end{enumerate}

\end{lemma}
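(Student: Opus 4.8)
Given Lemma \ref{Linfinity}, the four assertions amount to a one-sided metric estimate together with local parabolic regularity; here is the plan.

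The crux is (i). I would bound $\tr{\omega}{\pi^*\omega_Y}$ from above by a uniform constant, which, since $\pi^*\omega_Y \ge 0$, is equivalent to the inequality in (i). To this end I would apply the parabolic Schwarz lemma to the holomorphic map $\pi : (X, g(t)) \to (Y, \omega_Y)$: since $Y$ is compact, $\omega_Y$ has holomorphic bisectional curvature bounded above by some constant $C_0$, and the lemma gives
\[
\left( \frac{\partial}{\partial t} - \Delta \right) \log \tr{\omega}{\pi^* \omega_Y} \le C_0 \, \tr{\omega}{\pi^* \omega_Y}
\]
at points where $\tr{\omega}{\pi^*\omega_Y} > 0$. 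To upgrade this to a uniform bound on $[0,T)$ I would work on the time interval $[T/2, T)$, where $\hat\omega_t = \frac{T-t}{T}\omega_0 + \frac{t}{T}\pi^*\omega_Y \ge \frac12 \pi^*\omega_Y$, so that $\Delta\varphi = n - \tr{\omega}{\hat\omega_t} \le n - \frac12 \tr{\omega}{\pi^*\omega_Y}$, and apply the maximum principle to an auxiliary quantity of the form $\log \tr{\omega}{\pi^*\omega_Y} - A\varphi$ with $A$ chosen large, so that $A\Delta\varphi$ absorbs the Schwarz term; the bounds on $\varphi$ and $\dot\varphi$ from Lemma \ref{Linfinity} then enter. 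The subtle point is the behaviour near $E$, where $\dot\varphi$ has no uniform lower bound; one way to handle it is to insert a barrier term $\varepsilon \log |S|_h^2$, where $S$ is the section of $\mathcal{O}_X(E)$ cutting out $E$ and $h$ is a fixed smooth Hermitian metric normalised so that $|S|_h^2 \le 1$, forcing the supremum to be attained in $X \setminus E$, and then letting $\varepsilon \to 0$. I expect this control near the exceptional divisor to be the main obstacle.

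Assertion (ii) then follows by standard local estimates. Fix a compact $K \subset X \setminus E$ and a slightly larger compact $K' \Subset X\setminus E$. On $K'$ the map $\pi$ is a biholomorphism onto its image, so $\pi^*\omega_Y$ is a genuine K\"ahler metric there and hence $\pi^*\omega_Y \ge c'\omega_0$ on $K'$; combined with (i) this gives $\omega \ge c''\omega_0$ on $K'$, and together with $\omega^n \le C\Omega \le C'\omega_0^n$ (Lemma \ref{Linfinity}(iii)) an elementary eigenvalue comparison yields $c''\omega_0 \le \omega \le C'''\omega_0$ on $K'$. Thus, on $K'$, $\varphi$ satisfies a uniformly parabolic complex Monge--Amp\`ere equation with $\varphi$ and $\dot\varphi = \log(\omega^n/\Omega)$ both bounded. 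The local parabolic Evans--Krylov estimate, followed by differentiating the equation and Schauder bootstrapping, then gives $\|\omega\|_{C^i(K)} \le C_{K,i}$ for every $i$.

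Finally, (iii) and (iv) follow from (i), (ii) and Lemma \ref{Linfinity}(iv). By (ii) and Arzel\`a--Ascoli, any sequence $t_j \to T^-$ admits a subsequence along which $\omega(t_j)$ converges in $C^\infty_{\mathrm{loc}}(X\setminus E)$ to a smooth closed $(1,1)$-form, which by (i) is $\ge c\,\pi^*\omega_Y > 0$, hence a K\"ahler metric on $X \setminus E$. On the other hand, by Lemma \ref{Linfinity}(iv), $\omega(t) \to \omega_T$ weakly as currents on all of $X$, so every such subsequential limit coincides with $\omega_T$ on $X \setminus E$. Therefore $\omega_T|_{X\setminus E}$ is a smooth K\"ahler form, which is (iii), and $\omega(t) \to \omega_T$ in $C^\infty_{\mathrm{loc}}(X \setminus E)$ as $t \to T^-$, which is (iv).
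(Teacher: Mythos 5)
Your treatment of (ii), (iii) and (iv) is essentially the paper's: uniform equivalence of $\pi^*\omega_Y$ and $\omega_0$ on compact subsets of $X\setminus E$, the volume bound $\omega^n\le C\Omega$, local parabolic regularity, and weak convergence to identify the limit. The gap is in (i), and it is exactly at the point you flag but your proposed fix does not repair it. Running the maximum principle on $\log \tr{\omega}{\pi^*\omega_Y} - A\varphi$ (with or without the barrier $\varepsilon\log|S|_h^2$), the term produced at the maximum is $-A\dot{\varphi}$, which must be bounded \emph{above}, i.e.\ you need a lower bound on $\dot{\varphi}=\log(\omega^n/\Omega)$. Lemma \ref{Linfinity} gives only the upper bound $\dot\varphi\le C$, and no uniform lower bound on the volume form exists: $\omega^n$ degenerates along $E$ as $t\to T$. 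Forcing the supremum into $X\setminus E$ with the barrier changes nothing, because the maximum point may still approach $E$ and you have no quantitative lower bound for $\dot\varphi$ there (indeed, the difficulty was never the location of the maximum: $\tr{\omega}{\pi^*\omega_Y}$ and $\varphi$ are smooth on all of $X$ for $t<T$, and a maximum point automatically has $\tr{\omega}{\pi^*\omega_Y}>0$, so the paper needs no barrier and no restriction to $[T/2,T)$).

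The paper closes this step with a different device: it applies the maximum principle to
\begin{equation}
Q \;=\; \log \tr{\omega}{\pi^* \omega_Y} \;-\; A\varphi \;-\; An(T-t)\bigl(\log (T-t)-1\bigr),
\end{equation}
so that the time derivative contributes $An\log(T-t)$, and it keeps part of the trace term in the form
\begin{equation}
\tr{\omega}{\hat{\omega}_t} \;\ge\; \frac{T-t}{T}\,\tr{\omega}{\omega_0} \;\ge\; c_0\left(\frac{(T-t)^n\,\Omega}{\omega^n}\right)^{1/n},
\end{equation}
the last inequality by the arithmetic--geometric mean inequality. Then the problematic combination becomes $-A\dot\varphi + An\log(T-t) = A\log\bigl((T-t)^n\Omega/\omega^n\bigr)$, and setting $\mu=(T-t)^n\Omega/\omega^n$ one uses that $\mu\mapsto A\log\mu - c_0\mu^{1/n}$ is bounded above; the remaining portion of $-A\tr{\omega}{\hat\omega_t}$ (after choosing $A$ so that $(A-1)\hat\omega_t\ge (C_0+1)\pi^*\omega_Y$ uniformly in $t$) dominates the Schwarz term $C_0\,\tr{\omega}{\pi^*\omega_Y}$ and yields the bound on $Q$, hence on $\tr{\omega}{\pi^*\omega_Y}$. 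Without this (or some equivalent mechanism converting the missing lower bound on $\dot\varphi$ into a term controlled by the trace and explicit factors of $T-t$), your argument for (i) does not close; once (i) is in place, the rest of your proof goes through as in the paper.
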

\begin{proof}
Part (i) follows from the parabolic Schwarz lemma \cite{SoT1}.  Indeed, if $w = \tr{\omega} \pi^* \omega_Y$ then (see for example the proof of Lemma 3.2 of \cite{SW1}),
\begin{equation}
\left( \ddt{} - \Delta \right) \log w \le C_0 w,
\end{equation}
for a constant $C_0$ depending on the curvature tensor of $\omega_Y$.  Choose a constant $A$ large enough so that $(A-1) \hat{\omega}_t \ge (C_0+1)\pi^* \omega_Y$ all $t\in [0,T)$.  
If $Q= \log w- A \varphi - An(T-t) ( \log (T-t)-1)$ then
\begin{eqnarray} \nonumber
\left( \ddt{} - \Delta \right) Q & \le & C_0 w - A \dot{\varphi} + An + An\log (T-t) - A\tr{\omega}{\hat{\omega}_t} \\
& \le & - w + A \log \left( \frac{(T-t)^n \Omega}{\omega^n} \right) +An - c_0  \left(\frac{(T-t)^n \Omega}{\omega^n}\right)^{1/n},
\end{eqnarray}
for a constant $c_0>0$ chosen so that 
\begin{equation}
\tr{\omega}{\hat{\omega}_t} \ge \left( \frac{T-t}{T} \right) \tr{\omega}{\omega_0} \ge  c_0 \left( \frac{ (T-t)^n\Omega}{\omega^n} \right)^{1/n}.
\end{equation}
   Since the function $\mu \mapsto A \log \mu - c_0 \mu^{1/n}$ for $\mu>0$ is bounded from above, a maximum principle argument shows that $Q$ and hence $w$ is uniformly bounded from above, giving (i).

For part (ii) of the lemma, note that on any compact set $K \subset X \setminus E$,  the (1,1) form $\pi^* \omega_Y$ is uniformly equivalent to $\omega_0$ (depending  on $K$).  From Lemma \ref{Linfinity}.(iii) we have the  bound $\omega^n \le C \Omega$.  Combining this with  (\ref{uniformmetric}), we see that $\omega$ is uniformly bounded from above on a fixed compact  subset $K \subset X \setminus E$.
The desired estimates then follow from the standard local parabolic theory.

Parts (iii) and (iv) of the lemma follows from parts (i) and (ii) together with Lemma \ref{Linfinity}.
\qed
\end{proof}

We now use the description of the blow-up map $\pi: X \rightarrow Y$ near $E$ to compare $\pi^* \omega_Y$ with a fixed K\"ahler metric $\omega_0$ on $X$.  Recall that we write $\pi(E)=y_0$.  It will be  convenient for us to fix, once and for all, a coordinate chart $U$ centered at $y_0$, which we identify via coordinates $z^1, \ldots, z^n$ with the unit ball $D$ in $\mathbb{C}^n$,
\begin{equation}
D= \{ (z^1,\ldots, z^n) \in \mathbb{C}^n \ | \ \sum_{i=1}^n |z^i|^2 <1 \}.
\end{equation}
Denote by $g_{\textrm{Eucl}}$ the Euclidean metric on $D$.  Since $g_{\textrm{Eucl}}$ and $g_{Y}$ are uniformly equivalent on $D$, we will see in the following that it often suffices to prove estimates for $g_{\textrm{Eucl}}$ instead of $g_Y$.  Write $D_r \subset D$ for the ball of radius $0<r<1$ with respect to $g_{\textrm{Eucl}}$.

We recall  the definition of the blow-up construction, following the exposition in \cite{GH}.  We identify $\pi^{-1}(D)$  with the submanifold $\tilde{D}$ of $D \times \mathbb{P}^{n-1}$ given by
\begin{equation} \label{Dtilde}
\tilde{D} = \{ (z,l) \in D \times \mathbb{P}^{n-1} \ | \ z^i l^j = z^j l^i \},
\end{equation}
where $l=[l^1, \ldots, l^n]$ are homogeneous coordinates on $\mathbb{P}^{n-1}$.  The map $\pi$ restricted to $\tilde{D}$ is the projection $\pi|_{\tilde{D}}(z,l) = z \in D$, with the exceptional divisor $E \cong \mathbb{P}^{n-1}$ given by $\pi^{-1}(0)$.   The map $\pi$ gives an isomorphism from  $\tilde{D} \setminus E$  onto the punctured ball $D\setminus \{0\}$.

On $\tilde{D}$ we have coordinate charts $\tilde{D}_i = \{ l^i \neq 0 \}$ with local coordinates $\tilde{z}(i)^1, \ldots, \tilde{z}(i)^n$ given by $\tilde{z}(i)^j = l^j/l^i = z^j/z^i$ for $j \neq i$ and $\tilde{z}(i)^i = z^i$.  The divisor $E$ is given in $\tilde{D}_i$ by $\{ \tilde{z}(i)^i=0 \}$.  The line bundle $[E]$ over $\tilde{D}$ has transition functions $z^i/z^j$ on $\tilde{D}_i \cap \tilde{D}_j$.
We can define a global section $s$ of $[E]$ over $X$ by setting $s(z) = z^i$ on $\tilde{D}_i$ and $s=1$ on $X \setminus \pi^{-1}(D_{1/2})$.  The section $s$ vanishes along the exceptional divisor $E$.
We also define a Hermitian metric $h$ on $[E]$ as follows.   First let  $h_1$ be the Hermitian metric on $[E]$ over $\tilde{D}$ given in $\tilde{D}_i$ by
\begin{equation}
h_1 = \frac{ \sum_{j=1}^n | l^j|^2}{| l^i|^2},
\end{equation}
and let $h_2$ be the Hermitian metric on $[E]$ over $X \setminus E$ determined by
$|s |_{h_2}^2 =1.$
Now define the Hermitian metric $h$ by
$h= \rho_1 h_1 + \rho_2 h_2,$
where $\rho_1$, $\rho_2$ is a partition of unity for the cover $(\pi^{-1}(D), X\setminus \pi^{-1}(D_{1/2}))$ of $X$, so that $h=h_1$ on $\pi^{-1}(D_{1/2})$.  The function $|s|_h^2$ on $X$ is given on $\pi^{-1}(D_{1/2})$ by
\begin{equation} \label{r}
|s|_h^2 (x) = |z^1|^2 + \cdots + |z^n|^2=:r^2,\quad \textrm{for } \pi(x) = (z^1, \ldots, z^n). 
\end{equation}
On $\pi^{-1} (D_{1/2} \setminus \{0 \})$, the curvature $R(h)$ of $h$ is given by
\begin{equation} \label{Rh}
R(h) = - \frac{\sqrt{-1}}{2\pi} \partial \ov{\partial} \log ( |z^1|^2 + \cdots + |z^n|^2).
\end{equation}
We have the following lemma (for a proof, see \cite{GH}, p. 187).

\begin{lemma} \label{lemmaomegaX}
For sufficiently small $\varepsilon_0>0$,
\begin{equation} \label{omegaXdefn}
\omega_X = \pi^* \omega_Y - \varepsilon_0 R(h)
\end{equation}
is a K\"ahler form on $X$.
\end{lemma}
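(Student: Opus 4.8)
The plan is to show that $\omega_X$ is a smooth closed real $(1,1)$-form which is positive definite at every point of $X$; since such a form is automatically a Kähler form, this suffices. Smoothness, closedness and the type are immediate: $\pi^*\omega_Y$ is the pullback of a Kähler form, and $R(h)$ is the Chern curvature of the Hermitian line bundle $([E],h)$, which locally equals $-\ddbar \log |e|_h^2$ for a local holomorphic frame $e$ of $[E]$ and is therefore a globally defined smooth closed real $(1,1)$-form. So the entire content is pointwise positivity, which I would verify on a neighborhood of $E$ and on a compact subset disjoint from $E$ separately.

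First, away from $E$: set $N = \pi^{-1}(D_{1/4})$ and $K = X \setminus \pi^{-1}(D_{1/8})$, so that $K$ is compact (a closed subset of $X$), $K \subset X \setminus E$ (since $E = \pi^{-1}(0) \subset \pi^{-1}(D_{1/8})$), and $N \cup K = X$. Since $\pi$ restricts to a biholomorphism from $X \setminus E$ onto $Y \setminus \{y_0\}$, the form $\pi^*\omega_Y$ is an honest Kähler metric on $X \setminus E$, hence there is $c_K > 0$ with $\pi^*\omega_Y \ge c_K \omega_0$ on $K$; and since $R(h)$ is a fixed smooth form there is $C_K$ with $-R(h) \ge -C_K \omega_0$ on $K$. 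Thus $\omega_X = \pi^*\omega_Y - \varepsilon_0 R(h) \ge (c_K - \varepsilon_0 C_K)\omega_0 > 0$ on $K$ provided $\varepsilon_0 < c_K/C_K$. This is the only place the smallness of $\varepsilon_0$ is used.

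Second, on the neighborhood $N$ of $E$: here $h = h_1$, and using the formula (\ref{r}) one computes in the chart $\tilde{D}_i$ that, for the local holomorphic frame $e_i$ of $[E]$ with $s = \tilde{z}(i)^i e_i$, one has $|e_i|_h^2 = |s|_h^2/|\tilde{z}(i)^i|^2 = 1 + \sum_{j \ne i} |\tilde{z}(i)^j|^2$, whence $-R(h) = \ddbar \log\!\big(1 + \sum_{j \ne i} |\tilde{z}(i)^j|^2\big)$ is smooth and $\ge 0$ on all of $N$ (no puncture needed), and its restriction to $E = \{\tilde{z}(i)^i = 0\}$ is exactly the Fubini–Study form under the identification $E \cong \mathbb{P}^{n-1}$, in particular positive definite on $TE$. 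On the other hand $\pi^*\omega_Y \ge 0$ on $N$, and because $\pi$ collapses $E$ to $y_0$, at a point $x \in E$ one has $(\pi^*\omega_Y)_x(v,\bar v) = (\omega_Y)_{y_0}(d\pi_x v, \overline{d\pi_x v})$, which vanishes precisely when $v \in \ker d\pi_x = T_x E$. I would then argue pointwise: at $x \in E$, if $(\omega_X)_x(v,\bar v) = 0$ then both nonnegative summands vanish, so $(\pi^*\omega_Y)_x(v,\bar v) = 0$ forces $v \in T_x E$ (the null vectors of a positive semidefinite Hermitian form lie in its kernel, by Cauchy–Schwarz), and then $-R(h)_x(v,\bar v) = 0$ together with positive definiteness of $-R(h)|_{T_x E}$ forces $v = 0$; hence $(\omega_X)_x > 0$ for every $\varepsilon_0 > 0$. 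At $x \in N \setminus E$, $\pi^*\omega_Y$ is already positive definite (local biholomorphism) and $-\varepsilon_0 R(h) \ge 0$, so $(\omega_X)_x > 0$ there as well. Combining with the previous paragraph shows $\omega_X > 0$ on all of $X$ once $\varepsilon_0$ is small.

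The main obstacle is the behavior near $E$, where $\pi^*\omega_Y$ degenerates along $TE$ while $-R(h)$ degenerates in the normal direction, and one must see that these degeneracies are complementary so that the sum is strictly positive. The point that keeps this manageable is that $\pi^*\omega_Y$ is already strictly positive off $E$, so the only genuine verification is the linear-algebra statement at points of $E$ itself, while the smallness of $\varepsilon_0$ is needed only on the compact set $K$, where $\pi^*\omega_Y$ is uniformly bounded below and $R(h)$ is a fixed bounded form. (This is essentially the argument of \cite{GH}, p.\ 187.)
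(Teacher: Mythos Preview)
Your argument is correct and is essentially the standard proof from \cite{GH}, p.~187, which is exactly what the paper cites in lieu of a proof. The key observation---that $-R(h)$ is nonnegative near $E$ and positive along $TE$, while $\pi^*\omega_Y$ is nonnegative and positive transverse to $E$, so their degeneracies are complementary---is precisely the content of that reference, and your handling of the compact set away from $E$ (where the smallness of $\varepsilon_0$ enters) is the standard complement.
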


From now on we fix $\varepsilon_0>0$ as in the lemma, with $\omega_X$ defined by (\ref{omegaXdefn}).  One can see from (\ref{Rh}) that in $\pi^{-1} (D_{1/2} \setminus \{0\})$,  which we can identify with $D_{1/2} \setminus \{ 0 \}$, the metric $\omega_X$ has the form:
\begin{equation}\label{omegaX}
\omega_X = \pi^* \omega_Y +  \frac{\sqrt{-1}}{2\pi} \frac{\varepsilon_0}{r^2} \sum_{i,j=1}^n \left(\delta_{ij} - \frac{\ov{z^i} z^j}{r^2} \right) dz^i \wedge d\ov{z^j},
\end{equation}
for $r$ given by (\ref{r}).
It is then not difficult to prove:

\begin{lemma} \label{lemmacomp}
There exist positive  constants $C$, $C'$ such that
\begin{equation} \label{ineq1}
\pi^* \omega_Y \le \omega_X \le C \frac{\pi^* \omega_Y}{|s|^2_h}
\end{equation}
and
\begin{equation} \label{ineq2}
\frac{1}{C'} \pi^* \omega_Y \le \omega_0 \le C' \frac{\pi^* \omega_Y}{|s|^2_h}.
\end{equation}
\end{lemma}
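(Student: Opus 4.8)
The plan is to split $X$ into a neighborhood $\pi^{-1}(D_{1/2})$ of the exceptional divisor $E$, where the explicit expression (\ref{omegaX}) for $\omega_X$ is available, and the compact complement $X\setminus\pi^{-1}(D_{1/2})$, on which $E$ and hence the factor $|s|^2_h$ play no role. On the compact complement $\pi$ is a biholomorphism onto its image, so $\pi^*\omega_Y$ is a smooth K\"ahler metric there; this metric, together with the smooth K\"ahler metrics $\omega_X$ (Lemma \ref{lemmaomegaX}) and $\omega_0$ on the compact manifold $X$, are mutually uniformly equivalent, and $|s|^2_h$ is bounded above and bounded away from $0$ since $s$ vanishes only along $E\subset\pi^{-1}(D_{1/2})$. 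Hence all four inequalities in (\ref{ineq1})--(\ref{ineq2}) hold there for appropriate constants; the only point needing the structure of the construction rather than just compactness is the constant-free bound $\pi^*\omega_Y\le\omega_X$, which on this region amounts to $-\varepsilon_0 R(h)\ge 0$ and follows from the choice of $h$ in (\ref{omegaXdefn}) (and in any case is only ever used up to a uniform constant).

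For the region $\pi^{-1}(D_{1/2}\setminus\{0\})\cong D_{1/2}\setminus\{0\}$ I would work directly from (\ref{omegaX}),
\[
\omega_X=\pi^*\omega_Y+\frac{\sqrt{-1}}{2\pi}\,\frac{\varepsilon_0}{r^2}\sum_{i,j=1}^n\Big(\delta_{ij}-\frac{\ov{z^i}z^j}{r^2}\Big)\,dz^i\wedge d\ov{z^j},\qquad r^2=|s|^2_h .
\]
The key observation is that the Hermitian matrix $\big(\delta_{ij}-\ov{z^i}z^j/r^2\big)$ is the orthogonal projection onto the complement of the vector $z$: by Cauchy--Schwarz,
\[
0\le\sum_{i,j}\Big(\delta_{ij}-\frac{\ov{z^i}z^j}{r^2}\Big)v^i\ov{v^j}=|v|^2-\frac{1}{r^2}\Big|\sum_i\ov{z^i}v^i\Big|^2\le|v|^2
\]
for every $v\in\mathbb C^n$, so the extra $(1,1)$-form in (\ref{omegaX}) lies between $0$ and $\varepsilon_0 r^{-2}\oeuc$. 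The lower bound gives $\pi^*\omega_Y\le\omega_X$ at once. For the upper bound I would use that $\geuc$ and $g_Y$ are uniformly equivalent on $D$, so $\oeuc\le A\,\pi^*\omega_Y$ on $D_{1/2}$, together with $r^2=|s|^2_h<1$, so $\pi^*\omega_Y\le|s|^{-2}_h\pi^*\omega_Y$; combining,
\[
\omega_X\le\pi^*\omega_Y+\frac{\varepsilon_0 A}{|s|^2_h}\,\pi^*\omega_Y\le\frac{1+\varepsilon_0 A}{|s|^2_h}\,\pi^*\omega_Y ,
\]
which together with the compact region establishes (\ref{ineq1}).

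Finally, (\ref{ineq2}) I would deduce from (\ref{ineq1}): $\omega_0$ and $\omega_X$ are both smooth K\"ahler metrics on the compact manifold $X$, hence $(C')^{-1}\omega_X\le\omega_0\le C'\omega_X$ for some $C'>0$, and sandwiching with (\ref{ineq1}) yields $(C')^{-1}\pi^*\omega_Y\le\omega_0\le C'C\,|s|^{-2}_h\,\pi^*\omega_Y$. I do not expect a genuine obstacle: the only steps needing care are the linear-algebra fact that the extra term in (\ref{omegaX}) is a projection (controlled from above by $\varepsilon_0 r^{-2}\oeuc$ and nonnegative from below) and keeping track of constants across the transition between the two regions; everything else is routine.
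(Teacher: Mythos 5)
Your proof is correct and follows essentially the same route as the paper: reduce to $\pi^{-1}(D_{1/2}\setminus\{0\})$, use the explicit formula (\ref{omegaX}) together with the Cauchy--Schwarz/projection observation to obtain (\ref{ineq1}), and deduce (\ref{ineq2}) from the uniform equivalence of $\omega_0$ and $\omega_X$ on the compact manifold $X$. The only slight imprecision is the claim that $-\varepsilon_0 R(h)\ge 0$ on the transition region where $h$ interpolates between $h_1$ and $h_2$ (this is not automatic there), but as you yourself note the first inequality is only ever used up to a uniform constant, so this is harmless.
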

\begin{proof} Note that (\ref{ineq2}) follows immediately from (\ref{ineq1}) since $\omega_X$ and $\omega_0$ are uniformly equivalent K\"ahler metrics on $X$.
 It suffices to prove (\ref{ineq1}) in $\pi^{-1}(D_{1/2} \setminus \{0\})$.  We  make use of (\ref{omegaX}).
The first inequality of (\ref{ineq1}) follows from the fact that 
 if $Y^i$ is any $T^{1,0}$ vector then by the Cauchy-Schwarz inequality,
\begin{equation}
\sum_{i,j} \frac{\ov{z^i} z^j }{r^2} Y^i \ov{Y^j} = \frac{1}{r^2} \sum_i(\ov{z^i} Y^i) \sum_j (z^j \ov{Y^j}) \le  |Y|^2 = \sum_{i,j}\delta_{ij} Y^i \ov{Y^j}.
\end{equation}
The second inequality of (\ref{ineq1}) follows from the fact that $\ov{z^i} z^j$ is positive semi-definite. \qed

\end{proof}

The following lemma contains the key estimates of this section.

\begin{lemma} \label{lemmaest1}
There exists $\delta>0$ and a uniform constant $C$ such that for $\omega= \omega(t)$ a solution of the K\"ahler-Ricci flow:
\begin{enumerate}
\item[(i)] $\displaystyle{
 \omega \le \frac{C}{|s|_{h}^2}{ \pi^* \omega_Y}}$.
\item[(ii)] $\displaystyle{
 \omega \le \frac{C}{|s|_{h}^{2(1- \delta)}} \,\omega_0}$.
\end{enumerate}
\end{lemma}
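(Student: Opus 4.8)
The plan is to derive a differential inequality for the quantity $u = \log \tr{\omega}{\pi^*\omega_Y}$ (or, more precisely, for $\log \tr{\omega}{\omega_X}$, which is essentially the same off $E$ up to controlled corrections), and then feed in the extra negative term coming from $-A\varphi$ together with a term that blows up along $E$, so that a maximum principle argument gives the singular bound (i). Concretely, I would start from the parabolic Schwarz-type inequality already used in the proof of Lemma \ref{lemmack}, namely $(\partial_t - \Delta)\log w \le C_0 w$ for $w = \tr{\omega}{\pi^*\omega_Y}$, where $C_0$ depends on the curvature of $\omega_Y$. The point is that $w$ is not uniformly bounded near $E$ — the bound from Lemma \ref{lemmack}.(i) only goes the other way — so I need to subtract off a barrier that absorbs the blow-up. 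Using $|s|_h^2 = r^2$ near $E$ and the identity $R(h) = -\ddbar \log |s|_h^2$, the natural barrier is $-\log|s|_h^2$, which is smooth and bounded on $X\setminus E$ and $\to +\infty$ on $E$; moreover $\sqrt{-1}\partial\dbar(-\log|s|_h^2) = R(h) = \frac{1}{\varepsilon_0}(\omega_X - \pi^*\omega_Y)$, so $-\Delta(-\log|s|_h^2) = -\frac{1}{\varepsilon_0}\tr{\omega}{(\omega_X - \pi^*\omega_Y)} = -\frac{1}{\varepsilon_0}(\tr{\omega}{\omega_X} - w)$.

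So I would consider a test function of the shape
\begin{equation*}
Q = \log \tr{\omega}{\omega_X} - A\varphi + B\log|s|_h^2 - An(T-t)(\log(T-t)-1),
\end{equation*}
for suitable constants $A, B>0$. Using Lemma \ref{lemmacomp} to compare $\omega_X$, $\pi^*\omega_Y$, $\omega_0$ (up to the factor $|s|_h^{-2}$), the Schwarz inequality gives an upper bound $C_0'\tr{\omega}{\omega_X}$ for $(\partial_t-\Delta)\log\tr{\omega}{\omega_X}$; the $-A\varphi$ term contributes $-A\dot\varphi + A\,\Delta\varphi = -A\dot\varphi + A(n - \tr{\omega}{\hat\omega_t})$, which after using Lemma \ref{Linfinity} and the arithmetic–geometric mean inequality $\tr{\omega}{\hat\omega_t} \ge c_0((T-t)^n\Omega/\omega^n)^{1/n}$ produces a term like $A\log((T-t)^n\Omega/\omega^n) - c_0((T-t)^n\Omega/\omega^n)^{1/n}$, bounded above, plus $-c_1 A\,\tr{\omega}{\omega_X}$ once $A$ is chosen with $A\,\omega_X \ge (C_0'+1)\omega_X$ in the sense that the negative trace term dominates (here I use $\hat\omega_t \ge \frac{T-t}{T}\omega_0 \ge c\,\omega_X/\ldots$ type comparisons, choosing $A$ large); and the $B\log|s|_h^2$ term contributes $+B\,R(h)$-trace, i.e. $+\frac{B}{\varepsilon_0}(\tr{\omega}{\omega_X} - w) \le \frac{B}{\varepsilon_0}\tr{\omega}{\omega_X}$. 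Choosing $A$ large relative to $C_0'$, $B$, $\varepsilon_0$, the coefficient of $\tr{\omega}{\omega_X}$ in $(\partial_t-\Delta)Q$ becomes negative, so at an interior spatial maximum of $Q$ (which exists, since $Q\to-\infty$ on $E$ because of the $B\log|s|_h^2$ term with $B>0$... wait — I need $B<0$ for that, let me instead take $B>0$ and note $\log|s|_h^2 \le 0$ so this term is bounded above and the maximum is attained away from $E$ automatically only if the sign works out; the cleaner route is: since $\log\tr{\omega}{\omega_X}$ is bounded above on compact subsets away from $E$ by Lemma \ref{lemmack} but a priori blows up near $E$, I add $+B\log|s|_h^2$ with $B>0$ chosen large enough that $Q$ is bounded above near $E$, which forces the sup of $Q$ to be attained at an interior point of $X\setminus E$ or controlled). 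At such a point the maximum principle then bounds $\tr{\omega}{\omega_X}$, hence $w$, by $C/|s|_h^{2B}$; tracking constants and using Lemma \ref{lemmacomp} converts this into statement (i) with the exponent normalized to $1$, i.e. $\omega \le C|s|_h^{-2}\pi^*\omega_Y$. Part (ii) then follows immediately: by Lemma \ref{lemmacomp}, $\pi^*\omega_Y \le |s|_h^2 \omega_X \le C|s|_h^2 \cdot |s|_h^{-2}\cdot(\ldots)$ — more directly, $\pi^*\omega_Y \le C'|s|_h^2\omega_0$ is false; rather $\omega_0 \le C'|s|_h^{-2}\pi^*\omega_Y$, so $\omega \le C|s|_h^{-2}\pi^*\omega_Y \le C|s|_h^{-2}\cdot C'|s|_h^{0}\cdot(\ldots)$. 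The loss of the small $\delta$ in (ii) strongly suggests the actual argument is subtler — I expect one must interpolate: combine the crude bound (i) with the volume bound $\omega^n \le C\Omega$ from Lemma \ref{Linfinity}.(iii) and the reverse bound $\omega \ge c\pi^*\omega_Y$ from Lemma \ref{lemmack}.(i). Indeed $\omega \le (\tr{\omega_X}{\omega}) \omega_X$ pointwise is not how it works; instead one writes, for the largest eigenvalue $\lambda$ of $\omega$ w.r.t. $\omega_0$, that $\lambda^{n-1} \le (\tr{\omega_0}{\omega})^{n-1} \le C(\omega^n/\omega_0^n) \cdot (\text{product of other eigenvalues})^{-1}$... so using $\omega^n\le C\Omega$ and lower bounds on the small eigenvalues coming from $\omega \ge c\pi^*\omega_Y$, one trades off the singularity: the bound $\tr{\omega_0}\omega \le C|s|_h^{-2}$ gets improved to $C|s|_h^{-2(1-\delta)}$ for the specific $\delta$ determined by how $\Omega$ and $\pi^*\omega_Y$ degenerate along $E$ (recall $\Omega$ is a fixed smooth volume form, while $\omega_0^n \sim |s|_h^{2(n-1)}(\pi^*\omega_Y)^n$-ish near $E$).

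The main obstacle I anticipate is twofold. First, bookkeeping the constants $A$, $B$, $\varepsilon_0$ in the maximum principle so that the coefficient of the good negative term $\tr{\omega}{\omega_X}$ genuinely wins — this requires care because the Schwarz lemma constant $C_0$, the choice $A\hat\omega_t \ge (C_0+1)\pi^*\omega_Y$, and the barrier constant $B$ all interact, and one must ensure $Q$ really does attain its spatial maximum away from $E$ (or argue via a cutoff/exhaustion by $\{|s|_h^2 \ge \epsilon\}$ and let $\epsilon\to 0$). Second, and more seriously, obtaining the \emph{improved} exponent $1-\delta$ in (ii) rather than the naive $1$: this is where one must genuinely exploit the upper volume bound $\omega^n \le C\Omega$ together with the structure of $\Omega$ and $\omega_0^n$ near $E$ — a pure Schwarz-lemma/maximum-principle argument gives only the exponent $1$, and squeezing out the $\delta$ needs an eigenvalue/determinant interpolation that is the technical heart of the lemma. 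I would handle the $\delta$ by running a second maximum-principle argument on a modified quantity like $\log\tr{\omega}{\omega_0} - A\varphi + (1-\delta)\log|s|_h^2$ where now the term $-A\dot\varphi$ is replaced (using $\dot\varphi = \log(\omega^n/\Omega)$) by a term that, via $\omega^n\le C\Omega$ again but now with the sharper local form of $\Omega$, produces exactly the room needed to lower the exponent.
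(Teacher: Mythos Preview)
Your proposal has a fundamental error at the very first step: you are working with the wrong trace. You set $w = \tr{\omega}{\pi^*\omega_Y}$ (the trace of $\pi^*\omega_Y$ with respect to the evolving metric), but an upper bound on this quantity gives $\pi^*\omega_Y \le Cw\,\omega$, i.e.\ a \emph{lower} bound on $\omega$ --- exactly what Lemma~\ref{lemmack}(i) already provides. To prove $\omega \le C|s|_h^{-2}\pi^*\omega_Y$ you must bound $\tr{\pi^*\omega_Y}{\omega}$ (trace of $\omega$ with respect to the fixed, degenerate form $\pi^*\omega_Y$) from above; this is the Aubin--Yau/Cao direction, not the Schwarz-lemma direction. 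The same confusion runs through the whole sketch: $\log\tr{\omega}{\omega_X}$ is again the wrong way round, and the maximum-principle computation you set up, even if the constants were organized correctly, would at best reprove the lower bound you already have.

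The paper's argument uses the correct direction and, crucially, combines \emph{two} such traces in a single test function:
\[
Q_\varepsilon \;=\; \log \tr{\omega_0}{\omega} \;+\; A \log\bigl(|s|_h^{2+2\varepsilon}\,\tr{\pi^*\omega_Y}{\omega}\bigr) \;-\; A^2\varphi,
\]
applying the evolution inequality $(\partial_t - \Delta)\log\tr{\tilde\omega}{\omega} \le \tilde C\,\tr{\omega}{\tilde\omega}$ once with $\tilde\omega = \omega_0$ and once with $\tilde\omega = \pi^*\omega_Y$. The factor $|s|_h^{2+2\varepsilon}$ makes $Q_\varepsilon \to -\infty$ along $E$ (via Lemma~\ref{lemmacomp}), forcing an interior maximum; the $-A^2\varphi$ term produces $-A^2\dot\varphi + A^2(n - \tr{\omega}{\hat\omega_t})$, and $A$ is chosen so that $A(A\hat\omega_t - (1+\varepsilon)R(h) - C'\pi^*\omega_Y) \ge (C+1)\omega_0$, which absorbs both curvature terms at once. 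At the maximum one obtains $\tr{\omega}{\omega_0} + A^2\log(\omega^n/\Omega) \le C$, and then the volume bound and the algebraic inequality $\tr{\omega_0}{\omega} \le \frac{1}{(n-1)!}(\tr{\omega}{\omega_0})^{n-1}\omega^n/\omega_0^n$ give $Q_\varepsilon \le C$. Letting $\varepsilon \to 0$ yields (i) directly. The improvement (ii) is \emph{not} a separate interpolation step as you conjecture: since $\tr{\omega_0}{\omega} \le C\,\tr{\pi^*\omega_Y}{\omega}$, the bound $Q_0 \le C$ reads $(A+1)\log\tr{\omega_0}{\omega} + 2A\log|s|_h \le C$, whence $\tr{\omega_0}{\omega} \le C|s|_h^{-2A/(A+1)}$ with $\delta = 1/(A+1)$. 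The $\delta$ is built into the weighting of the two traces in $Q_\varepsilon$, not extracted afterward from eigenvalue gymnastics.
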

\begin{proof}

Fix $0 < \ve \le 1$.  We will apply the maximum principle to the quantity
\begin{equation}
Q_{\ve} = \log \tr{\omega_0}{\omega} + A \log \left( |s|_{h}^{2+2\ve} \tr{\pi^* \omega_Y}{\omega} \right) - A^2 \varphi,
\end{equation}
on $X\setminus E$ where $A$ is a constant to be determined.
From Lemma \ref{lemmacomp} we have
\begin{equation} \label{oYX}
|s|^2_{h}\tr{\pi^*\omega_Y}{\omega} \le C\tr{\omega_0}{\omega}.
\end{equation}
It follows that at any fixed time $t$, $Q_{\ve}(x,t)$ tends to negative infinity as $x \in X$ tends to $E$.  Suppose there exists $(x_0, t_0) \in X\setminus E \times (0,T)$ with $\sup_{X\setminus E \times [0,t_0]} Q_{\ve} = Q_{\ve}(x_0,t_0)$. At $(x_0,t_0)$ we have
\begin{eqnarray}  \nonumber
0 \le \left( \ddt{} - \Delta \right) Q_{\ve} & \le & C \tr{\omega}{\omega_0} - A \tr{\omega} ( A \hat{\omega}_{t_0} - (1 +\varepsilon) R(h)- C' \pi^* \omega_Y) \\ && \mbox{} - A^2 \log \frac{\omega^n}{\Omega} + A^2n, \label{Hve1}
\end{eqnarray}
for some uniform constants $C, C'$.  Indeed, to see (\ref{Hve1}), we note that for any fixed K\"ahler metric $\tilde{\omega}$ on $X$, if $\omega=\omega(t)$ solves the K\"ahler-Ricci flow, we have by a well-known computation (\cite{Y1, A, Cao}).
\begin{eqnarray} \nonumber
\left( \ddt{} - \Delta \right) \log \tr{\tilde{\omega}}{\omega} & = & \frac{1}{\tr{\tilde{\omega}}{\omega}  } \left( - g^{i \ov{j}} \tilde{R}_{i \ov{j}}^{ \ \ k \ov{\ell}} g_{k \ov{\ell}} - g^{i \ov{j}} \tilde{g}^{k \ov{\ell}} g^{p\ov{q}} \tilde{\nabla}_i g_{k \ov{q}} \tilde{\nabla}_{\ov{j}} g_{p\ov{\ell}} + \frac{ |\nabla \tr{\tilde{\omega}}{\omega}|^2}{\tr{\tilde{\omega}}{\omega}} \right) \\
& \le & \tilde{C} \, \tr{\omega}{\tilde{\omega}}, \label{tr1}
\end{eqnarray}
for $\tilde{C}$ depending only on the lower bound of the bisectional curvature of $\tilde{g}$.  Applying (\ref{tr1}) with $\tilde{\omega} = \omega_0$ and then $\tilde{\omega}= \pi^*\omega_Y$ at the point $(x_0, t_0)$ we obtain (\ref{Hve1}), where in the second case the constant $\tilde{C}$ in (\ref{tr1}) depends only on the lower bound of the bisectional curvature of $\omega_Y$ on $Y$.

  But by Lemma \ref{lemmaomegaX} and the definition of $\hat{\omega}_t$ we can choose $A$ sufficiently large (and independent of $\ve$, $t_0$) so that
\begin{equation}
A \left( A \hat{\omega}_{t_0} - (1 +\varepsilon) R(h)- C' \pi^* \omega_Y \right) \ge (C+1)\omega_0.
\end{equation}
Then at $(x_0, t_0)$ we have
\begin{equation}
\left( A^2 \log  \frac{\omega^n}{\Omega} +  \tr{\omega}{\omega_0} \right)(x_0,t_0) \le C.
\end{equation}
But, for $c>0$, since the map $y \mapsto \log y + c/y$ is uniformly bounded from below and tends to infinity as $y \rightarrow 0^+$, this  implies that
\begin{equation}
(\tr{\omega}{\omega_0}) (x_0,t_0) \le C.
\end{equation}
Since the volume form $\omega^n$ is uniformly bounded from above by Lemma \ref{Linfinity}, we see that
\begin{equation*} \label{trdet}
(\tr{\omega_0}{\omega})(x_0,t_0) \le \frac{1}{(n-1)!}   \left(\tr{\omega}{\omega_0}\right)^{n-1} (x_0,t_0) \, \left(\frac{\omega^n}{\omega_0^n}\right) (x_0,t_0) \le C.
\end{equation*}
Moreover, from (\ref{oYX}) we see that
\begin{equation}
(|s|_h^2 \tr{\pi^*\omega_Y}{\omega})(x_0,t_0) \le C,
\end{equation}
and by Lemma \ref{Linfinity}, $\varphi$ is uniformly bounded.  It follows that 
\begin{equation} \label{He}
Q_{\ve} \le C,
\end{equation}
for $C$ independent of $\varepsilon$.   Letting $\ve \rightarrow 0$ and applying again (\ref{oYX})  immediately gives (i).
Using the fact that $\tr{\omega_0}{\omega} \le C \tr{\pi^*\omega_Y} \omega$ (by
 Lemma \ref{lemmacomp}), we also have
\begin{equation}
\log \left(  |s|_{h}^{2A} (\tr{\omega_0}{\omega})^{A+1} \right) \le C,
\end{equation}
and hence for some $\delta>0$, 
\begin{equation}
\tr{\omega_0}{\omega} \le \frac{C}{|s|^{2(1-\delta)}_{h}},
\end{equation}
giving
 (ii). \qed
\end{proof}

The estimate (ii) of Lemma \ref{lemmaest1} shows that, for any tangent vector $v$ on $X$ of unit length with respect to $\omega_0$, the quantity $|v|^2_{\omega}$ is bounded from above by $C/|s|_h^{2(1-\delta)}$, where $C$ may depend on $v$.  Although it is not strictly needed for the proof of the main theorem,  we can improve the exponent $2(1-\delta)$ to 1 if the vector $v$ is in a `radial direction' pointing away from the exceptional divisor.  More precisely, consider the holomorphic vector field
$$z^i \frac{\partial}{\partial z^i},$$
defined on the unit ball $D$.  This  defines via $\pi$ a holomorphic vector field $V$ on $\pi^{-1}(D) \subset X$ which vanishes to order 1 along the exceptional divisor $E$.  We then extend $V$ to be a smooth $T^{1,0}$ vector  field on the whole of $X$.   We then have the following lemma.

\begin{lemma} \label{lemmaradial} For $\omega=\omega(t)$ a solution of the K\"ahler-Ricci flow, we have the estimate
\begin{equation} \label{estimateV}
|V|_{\omega}^2 \le C|s|_h.
\end{equation}
for a uniform $C$.  Locally, in $D_{1/2} \setminus \{0\}$ we have
\begin{equation} \label{estimateW}
|W|^2_{g} \le \frac{C}{r},
\end{equation}
for $W = \sum_{i=1}^n \left( \frac{x^i}{r} \frac{\partial}{\partial x^i} + \frac{y^i}{r} \frac{\partial}{\partial y^i} \right)$ the unit length radial vector field with respect to $g_{\textrm{Eucl}}$, where $z^i = x^i + \sqrt{-1} y^i$.
\end{lemma}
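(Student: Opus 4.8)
The plan is to prove the sharper estimate $|V|_\omega^2 \le C|s|_h$ by a maximum-principle argument on $X \setminus E$, modeled on the proof of Lemma \ref{lemmaest1} but now applied to the scalar quantity $\log(|s|_h^2 |V|_\omega^2)$ rather than to a trace of the metric. The point is that the holomorphic vector field $V$ vanishes to order exactly $1$ along $E$, so $|s|_h^2 |V|_\omega^2 = |s|_h^2 g_{i\ov j} V^i \ov{V^j}$ is, up to bounded factors, comparable on $\pi^{-1}(D_{1/2}) \setminus E$ to $|s|_h^2$ times the $\omega$-length squared of the radial field $W$ (after rescaling by $1/r$), and by Lemma \ref{lemmaest1}(i) together with the explicit form \eqref{omegaX} one sees this product is bounded near $E$; in fact it tends to zero, which is what lets the maximum principle get traction. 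So the first step is the bookkeeping: show $|V|_\omega^2 \to 0$ as $x \to E$ and that \eqref{estimateV} is equivalent, on $\pi^{-1}(D_{1/2})\setminus\{0\}$, to \eqref{estimateW} with $r^2 = |s|_h^2$ and $V = r W$ up to a bounded holomorphic change, while away from a neighborhood of $E$ the estimate is trivial since $\omega(t)$ is uniformly bounded there by Lemma \ref{lemmack}.

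The core step is the evolution inequality. Because $V$ is holomorphic, $f := |V|_\omega^2 = g_{i\ov j}V^i\ov{V^j}$ satisfies a Bochner-type identity: writing $\Delta$ for the $\omega$-Laplacian and using $\partial_t g_{i\ov j} = -R_{i\ov j}$ one computes
\begin{equation}
\left(\ddt{} - \Delta\right) \log f = - \frac{|\nabla' f|_\omega^2}{f^2} + \frac{(\text{lower order})}{f},
\end{equation}
and more precisely $(\partial_t - \Delta)\log f \le 0$ holds for the length squared of a holomorphic vector field under Kähler-Ricci flow, by the standard computation (the curvature terms from $\partial_t g$ and from $\Delta$ of $g_{i\ov j}V^i\ov{V^j}$ cancel against each other exactly as in the Schwarz-lemma calculation, cf. \eqref{tr1}). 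Granting this, one forms the test quantity
\begin{equation}
Q = \log\left(|s|_h^2 \, |V|_\omega^2\right) - A\varphi = \log f + \log|s|_h^2 - A\varphi
\end{equation}
on $X \setminus E$, for $A$ a large constant. Since $\frac{\sqrt{-1}}{2\pi}\partial\dbar\log|s|_h^2 = -R(h)$ on $\pi^{-1}(D_{1/2})\setminus\{0\}$ and $R(h)$ is a smooth form on all of $X$ (bounded with respect to $\omega_0$), and since $(\partial_t - \Delta)(-A\varphi) = -A\dot\varphi + A\,\tr{\omega}{\hat\omega_t} - An$ with $\tr{\omega}{\hat\omega_t} \ge 0$ and $\dot\varphi \le C$ by Lemma \ref{Linfinity}, we get at an interior maximum of $Q$ (which exists because $Q \to -\infty$ near $E$)
\begin{equation}
0 \le \left(\ddt{} - \Delta\right) Q \le \tr{\omega}{A R(h)} + A\,\tr{\omega}{\hat\omega_t} - An - A\dot\varphi + C,
\end{equation}
and here is where we must be careful: the trace terms have the wrong sign for a naive contradiction, so instead of $-A\varphi$ alone we should use the already-established bound from Lemma \ref{lemmaest1}. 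Concretely, I would run the maximum principle on $Q' = \log f + \log|s|_h^2 + \epsilon\,(\text{the quantity } Q_\ve \text{ of Lemma \ref{lemmaest1}})$ or, more cleanly, first note that Lemma \ref{lemmaest1}(i) already gives $|s|_h^2\,\tr{\pi^*\omega_Y}{\omega} \le C$, hence $|s|_h^2 f = |s|_h^2 g_{i\ov j}V^i\ov{V^j} \le C|s|_h^2 |V|^2_{\pi^*\omega_Y}\cdot \tr{\pi^*\omega_Y}{\omega}/(\text{something})$ — and since $V$ vanishes to order $1$, $|V|^2_{\pi^*\omega_Y} \le C|s|_h^2$ near $E$ (this is \eqref{omegaX}: the radial part of $\omega_X$ blows up like $1/r^2$ but $\pi^*\omega_Y$ is bounded, so $|V|^2_{\pi^*\omega_Y}\le Cr^2$), giving $|s|_h^2 f \le C|s|_h^2\cdot C/|s|_h^2 \cdot |s|_h^2 = C|s|_h^2$ — wait, this over-counts. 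The right statement is: $f = g_{i\ov j}V^i\ov{V^j} \le (\tr{\pi^*\omega_Y}{\omega})\cdot \sup_{\text{unit }\pi^*\omega_Y} |V|^2 \le (C/|s|_h^2)\cdot (C|s|_h^2) = C$, which only recovers $|V|^2_\omega \le C$, not the gain of a factor $|s|_h$. So the genuine improvement does require the Bochner argument, and the test function must be $Q = \log f + (1+\ve)\log|s|_h^2 - A\varphi - An(T-t)(\log(T-t)-1)$ with the $R(h)$ curvature contribution absorbed, exactly as $\hat\omega_t$ absorbs $(1+\ve)R(h)$ in Lemma \ref{lemmaest1} for $A$ large; then $(\partial_t - \Delta)Q \le C\,\tr{\omega}{\omega_0}\cdot(\text{from the sign-wrong leftover}) - \ldots$, and one closes using that $\tr{\omega}{\omega_0}$ is controlled where $Q$ is large via $\omega^n \le C\Omega$ and the bound on $\tr{\omega_0}{\omega}$ from Lemma \ref{lemmaest1}(ii), by the same $\log$-vs-power trick $(y \mapsto A\log y - c\,y^{1/n})$ used there.

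The main obstacle I anticipate is verifying the clean Bochner inequality $(\partial_t - \Delta)\log|V|_\omega^2 \le 0$ for the length of a holomorphic vector field along Kähler-Ricci flow — tracking that the curvature term $g^{i\ov j}R_{i\ov j}{}^{k}{}_{\ell}V^\ell \ov{V}$-type contributions from differentiating the metric in $t$ cancel exactly against the curvature terms produced by $\Delta(g_{k\ov\ell}V^k\ov{V^\ell})$, leaving only a manifestly non-positive gradient term, together with handling the fact that $V$ is only holomorphic on $\pi^{-1}(D)$ and merely smooth elsewhere (so the inequality picks up bounded error terms supported away from $E$, which are harmless). The second obstacle is the sign of the trace terms $\tr{\omega}{R(h)}$ and $\tr{\omega}{\hat\omega_t}$ in the evolution of $Q$: these must be dominated by combining the $-A^2\varphi$-type Lagrangian with the barrier terms exactly as in Lemma \ref{lemmaest1}, choosing $A$ so that $A^2\hat\omega_t - (1+\ve)R(h) \ge (C+1)\omega_0$; I expect this to go through verbatim, with the extra factor $|s|_h$ in the conclusion coming precisely from replacing the exponent $2+2\ve$ in $Q_\ve$ by $1+2\ve$ — i.e. one power of $|s|_h^2$ is "spent" controlling the blow-up of the trace and the half-power of improvement reflects the order-$1$ vanishing of $V$. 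The local statement \eqref{estimateW} then follows from \eqref{estimateV} by writing $W = (1/r)\,\mathrm{Re}(2V)$ in real coordinates on $D_{1/2}\setminus\{0\}$ and using $|s|_h^2 = r^2$ from \eqref{r}, since $|W|_g^2 = (1/r^2)|2\,\mathrm{Re}\,V|_g^2 \le (C/r^2)|V|_\omega^2 \le (C/r^2)\cdot C r = C/r$.
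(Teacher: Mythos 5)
Your Bochner-type ingredient is correct and is indeed the paper's key computation: for holomorphic $V$ one has $(\partial_t-\Delta)\log|V|^2_\omega\le 0$ by exactly the cancellation-plus-Cauchy--Schwarz you describe, and your reduction of \eqref{estimateW} to \eqref{estimateV} via $\mathrm{Re}(\tfrac1r V)=\tfrac12 W$ also matches. But the mechanism you propose for extracting the factor $|s|_h$ is a genuine gap. With the test function as you wrote it, $Q=\log|V|^2_\omega+(1+\ve)\log|s|^2_h-A\varphi-\cdots$, the maximum principle (even if it closes) only bounds $|V|^2_\omega\,|s|_h^{2(1+\ve)}$ from above, i.e.\ it gives $|V|^2_\omega\le C|s|_h^{-2(1+\ve)}$ --- weaker than the trivial bound $|V|^2_\omega\le C$ you yourself derived from Lemma \ref{lemmaest1}(i). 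A positive power of $|s|_h$ inside the quantity can never prove \emph{smallness} of $|V|^2_\omega$ near $E$. If you flip the sign to bound $|V|^2_\omega/|s|_h^{2\lambda}$, the term $\lambda\,\tr{\omega}{(-R(h))}$ now enters the evolution with the bad sign, and since $-R(h)=\tfrac1{\ve_0}(\omega_X-\pi^*\omega_Y)\ \gtrsim\ \omega_0-C\pi^*\omega_Y$ it is of size $\tr{\omega}{\omega_0}$ in exactly the $E$-directions where $\hat\omega_t\to\pi^*\omega_Y$ degenerates as $t\to T$; so it cannot be absorbed by $A\,\tr{\omega}{\hat\omega_t}$ uniformly in $t$, and neither the (merely nonpositive) evolution of $\log|V|^2_\omega$ nor $-A\varphi$ supplies the needed $-\tr{\omega}{\omega_0}$. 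In Lemma \ref{lemmaest1} it is precisely the \emph{positive} power of $|s|_h$ that produces the helpful $-\tr{\omega}{\omega_0}$ term (Tsuji's trick), which is why that scheme does not transfer to proving a decay estimate.

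The paper's actual route avoids weighting by $|s|_h$ altogether: it applies the maximum principle to $Q_\ve=\log\bigl(|V|_\omega^{2+2\ve}\,\tr{\pi^*\tilde\omega_Y}{\omega}\bigr)-t$, where $\tilde\omega_Y$ is chosen \emph{flat} (Euclidean) on $D$, so that both $\log|V|^2_\omega$ and $\log\tr{\pi^*\tilde\omega_Y}{\omega}$ satisfy $(\partial_t-\Delta)(\cdot)\le 0$ at a maximum in $D_{1/2}\setminus\{0\}$ (and $Q_\ve$ is a priori bounded elsewhere by Lemma \ref{lemmaest1}, so non-holomorphicity of $V$ away from $\pi^{-1}(D)$ never enters); the $-t$ term then gives the contradiction. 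This yields $(\tr{\pi^*\omega_Y}{\omega})\,|V|^2_\omega\le C$, and the half-power gain comes from a purely algebraic step you are missing: since $|V|^2_\omega\le(\tr{\omega_0}{\omega})\,|V|^2_{\omega_0}\le C(\tr{\omega_0}{\omega})\,|s|^2_h$ and $\tr{\omega_0}{\omega}\le C\,\tr{\pi^*\omega_Y}{\omega}$, one gets $|V|^4_\omega\le C|s|^2_h$, i.e.\ \eqref{estimateV}. So your proposal has the right analytic lemma but lacks the product-with-a-flat-trace test function and the final squaring argument; as written, the weighting scheme cannot produce the estimate.
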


In the statement and proof of the lemma, we are identifying $\pi^{-1}(D_{1/2} \setminus \{0 \})$ with $D_{1/2} \setminus \{0\}$ via the map $\pi$, and writing $g$ for the K\"ahler metric $(\pi^{-1})^* g$  `downstairs' on the punctured ball $D_{1/2} \setminus \{ 0 \} \subset Y$.  We remark that a related result about holomorphic vector fields and the behavior of the K\"ahler-Ricci flow can be found in \cite{PSSW3}.

\begin{proof}[Proof of Lemma \ref{lemmaradial}]
From the expression (\ref{omegaX}) we have, in $D_{1/2}$,
\begin{equation}
|V|^2_{\omega_X} = |V|^2_{\pi^* \omega_Y}.
\end{equation}
It follows that $|V|^2_{\omega_0}$ is uniformly equivalent to $|s|^2_h=r^2$ in $D_{1/2}$.  Hence there exists a positive constant $C(t)$ depending on $t$ such that
\begin{equation} \label{V2}
\frac{1}{C(t)} |s|_h^2 \le |V|_{\omega}^2 \le C(t) |s|^2_h.
\end{equation}
Now define a Hermitian metric $\tilde{\omega}_Y$ on $Y$ by
\begin{equation}
\tilde{\omega}_Y = \oeuc, \quad \textrm{on } D,
\end{equation}
and extending in an arbitrary way to be a smooth Hermitian metric on $Y$.
For small $\ve>0$, we consider the quantity
\begin{equation}
Q_{\ve} = \log \left( |V|_{\omega}^{2+2\ve} \tr{\pi^*\tilde{\omega}_Y}{\omega} \right) -t.
\end{equation}
Since $\tilde{\omega}_Y$ is uniformly equivalent to $\omega_Y$, we see that for fixed $t$, using (\ref{oYX}) and (\ref{V2}),  $(|V|^{2+2\ve}_{\omega} \tr{\pi^*{\tilde{\omega}_Y}}{\omega})(x)$ tends to zero as $x$ tends to $E$ and thus $Q_{\ve}(x)$ tends to negative infinity.  We now apply the maximum principle to $Q_{\ve}$. Since $Q_{\ve}$ is uniformly bounded from above on the complement of $D_{1/2} \setminus \{0 \}$, we only need to rule out the case when $Q_{\ve}$ attains its maximum at a point in $D_{1/2}\setminus \{0 \}$.
 Assume  that at some point $(x_0, t_0) \in D_{1/2} \setminus \{0 \} \times (0,T)$, we have    $\sup_{(X\setminus E) \times [0,t_0]}   Q_{\ve} = Q_{\ve}(x_0,t_0)$.
By (\ref{tr1}), we have in $D_{1/2} \setminus \{ 0\}$,
\begin{equation}
 \left( \ddt{} - \Delta \right) \tr{\pi^* \tilde{\omega}_Y}{\omega} \le 0,
 \end{equation}
since the curvature of $\tilde{\omega}_Y$ vanishes.
Next, compute in a normal coordinate system for $g$ at $(x_0,t_0)$,
\begin{eqnarray} 
(\ddt{} - \Delta) \log |V|^2_{\omega} & = & \frac{1}{|V|_{\omega}^2} \left( - g^{i \ov{j}} g_{k \ov{\ell}} (\partial_i V^k) ( \ov{\partial_j V^\ell})  + \frac{ | \nabla |V|^2_{\omega} |_{\omega}^2}{|V|_{\omega}^2} \right)  \le 0, \label{logV}
\end{eqnarray}
since, applying the Cauchy-Schwarz inequality,
\begin{eqnarray} \nonumber
 | \nabla |V|^2_{\omega} |_{\omega}^2 & = & g^{i \ov{j}} \partial_i (g_{k \ov{\ell}} V^k \ov{V^\ell}) \partial_{\ov{j}} (g_{p \ov{q}} V^p \ov{V^q}) \\ \nonumber
 &  = & g^{i \ov{j}} g_{k \ov{\ell}} g_{p \ov{q}} (\partial_i V^k) ( \ov{\partial_j V^q}) V^p \ov{V^\ell} \\ \nonumber
 & = & \sum_{i} \left( \sum_k (\partial_i V^k) \ov{V^k} \right) \left( \sum_p ( \ov{\partial_i V^p})V^p \right) \\
 & \le & \sum_i | \partial_i V|_{\omega}^2 |V|_{\omega}^2 = |V|_{\omega}^2 g^{i \ov{j}} g_{k \ov{\ell}} (\partial_i V^k) ( \ov{\partial_j V^\ell}).
\end{eqnarray}
Then  we obtain, at $(x_0, t_0)$,
\begin{eqnarray}
0 \le \left( \ddt{} - \Delta \right) Q_{\ve} \le - 1,
\end{eqnarray}
a contradiction.   Thus $Q_{\ve}$ is uniformly bounded from above.  Letting $\ve$ tend to zero we obtain
\begin{equation}
(\tr{\pi^*\omega_Y} \omega) |V|^2_{\omega} \le C,
\end{equation}
for some uniform constant $C$.   By Lemma \ref{lemmacomp} we have
$(\tr{\omega_0}{\omega}) |V|^2_{\omega} \le C,$
and since $|V|^2_{\omega} \le  (\tr{\omega_0}{\omega}) |V|^2_{\omega_0}$ this gives
\begin{equation}
|V|^4_{\omega} \le C |V|^2_{\omega_0}
\end{equation}
and (\ref{estimateV}) follows from the fact that $|V|^2_{\omega_0}$ is uniformly equivalent to $|s|^2_h$ in $D_{1/2}$.

Finally observe that 
$\textrm{Re} \left( \frac{1}{r} V \right) = \frac{1}{2} W$  in $D_{1/2} \setminus \{0\}$ and hence (\ref{estimateW}) follows from (\ref{estimateV}).
 \qed
\end{proof}

We end this  section with estimates on the lengths of spherical and radial paths in the punctured ball $D_{1/2} \setminus \{0\}$ which again we identify with its preimage under $\pi$.  We obtain in this way a diameter bound of $X$ with respect to $g(t)$.

\pagebreak[2]
\begin{lemma} \label{lemmakey}  We have
\begin{enumerate}
\item[(i)] For $0<r<1/2$, the diameter of the $2n-1$ sphere $S_r$ of radius $r$ in $D$ centered at the origin with the metric induced from $\omega$ is uniformly bounded from above, independent of $r$.
\item[(ii)]  For any $x \in D_{1/2} \setminus \{0 \}$, the length of a radial path $\gamma(\lambda) = \lambda x$ for $\lambda \in (0,1]$ with respect to $\omega$ is uniformly bounded from above by a uniform constant multiple of $|x|^{1/2}$.
\end{enumerate}
Hence the diameter of $D_{1/2} \setminus \{0 \}$ with respect to $\omega$ is uniformly bounded from above and
$$\emph{diam}_{\omega} X \le C.$$
\end{lemma}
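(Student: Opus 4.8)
The plan is to localize everything to the fixed coordinate ball. Outside a neighborhood of $E$ the metric is already controlled: $X\setminus\pi^{-1}(D_{1/4})$ is a compact subset of $X\setminus E$, so Lemma \ref{lemmack}(ii) gives a uniform $C$ with $\omega(t)\le C\omega_0$ there, whence the $\omega(t)$-diameter of $X\setminus\pi^{-1}(D_{1/4})$ is at most $\sqrt C$ times its fixed $\omega_0$-diameter, hence uniformly bounded. The two statements (i) and (ii) will then be extracted from the pointwise bounds of Lemma \ref{lemmaest1}(i) and Lemma \ref{lemmaradial}, using also that $\pi^*\omega_Y\le C'\geuc$ on $D_{1/2}$ (uniform equivalence of $g_Y$ and $\geuc$ on $D$), and the final diameter bound is obtained by joining everything through the sphere $S_{1/4}$.

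\textbf{Part (i).} On $S_r$ the function $|s|_h^2=r^2$ is constant, so Lemma \ref{lemmaest1}(i) gives $\omega\le (C/r^2)\,\pi^*\omega_Y\le (C'/r^2)\,\geuc$ on $S_r$. Any two points of $S_r$ are joined by a great-circle arc of Euclidean length at most $\pi r$, along which the $\omega$-length element is at most $(\sqrt{C'}/r)$ times the Euclidean one; hence their $\omega$-distance along this arc is at most $\pi\sqrt{C'}$, independent of $r$. This proves (i).

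\textbf{Part (ii).} The radial path $\gamma(\lambda)=\lambda x$, $\lambda\in(0,1]$, stays in $D_{1/2}\setminus\{0\}$, and its velocity $\dot\gamma(\lambda)=x$ equals, at the point $\gamma(\lambda)$, exactly $|x|$ times the Euclidean unit radial field $W$ of (\ref{estimateW}), where $r:=|\gamma(\lambda)|=\lambda|x|$. By (\ref{estimateW}) we get $|\dot\gamma(\lambda)|_g\le |x|\sqrt{C/r}=\sqrt{C|x|}\,\lambda^{-1/2}$, and integrating over $\lambda\in(0,1]$ (the integrand $\lambda^{-1/2}$ being integrable at $0$) yields $\omega$-length of $\gamma$ at most $2\sqrt{C}\,|x|^{1/2}$. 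In particular this radial path has finite $\omega$-length although it limits, as $\lambda\to 0^+$, to a point of $E$.

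\textbf{Assembling the diameter bound.} I would fix $S_{1/4}$ and show every point of $X$ lies within a uniform $\omega$-distance of it. A point $x\in D_{1/2}\setminus\{0\}$ is joined to its radial projection $\tfrac14 x/|x|\in S_{1/4}$ by a subarc of a radial path as in (ii), hence at $\omega$-distance at most $2\sqrt C(1/2)^{1/2}$; a point of $E$ is joined to the corresponding point of $S_{1/4}$ by the continuous extension to $\lambda=0$ of such a radial path, whose $\omega$-length (well defined since $g(t)$ is a genuine smooth metric on all of $X$ for $t<T$) satisfies the same bound. Together with (i) this shows $\pi^{-1}(D_{1/2})$ has uniformly bounded $\omega$-diameter; since $S_{1/4}$ lies in both $\pi^{-1}(D_{1/2})$ and $X\setminus\pi^{-1}(D_{1/4})$, combining with the estimate on $X\setminus\pi^{-1}(D_{1/4})$ above gives $\textrm{diam}_\omega X\le C$ uniformly. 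The only genuinely delicate point is the bookkeeping near $E$ — that the radial paths pointing at $E$ have finite length and that this controls distances to points of $E$ — but since $\omega(t)$ is smooth on $X$ for $t<T$ this is routine, and all the substantive input sits in Lemmas \ref{lemmaest1} and \ref{lemmaradial}.
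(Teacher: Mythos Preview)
Your proof is correct and follows essentially the same approach as the paper: part (i) from Lemma \ref{lemmaest1}(i) via the $\omega\le (C/r^2)\,\oeuc$ bound on $S_r$, part (ii) from the radial estimate (\ref{estimateW}) of Lemma \ref{lemmaradial}, and the global diameter bound by combining these with the compact-set estimates of Lemma \ref{lemmack}. The only cosmetic difference is that the paper handles points on $E$ by a limiting-sequence argument rather than extending the radial paths to $\lambda=0$, but both are routine given that $g(t)$ is smooth on $X$ for $t<T$.
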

\begin{proof}
For (i), we use  Lemma \ref{lemmaest1}.
Since $\omega_Y$ is equivalent to the Euclidean metric $\oeuc$ in these coordinates, we have
\begin{equation}
\omega \le \frac{C}{r^2} \oeuc,
\end{equation}
for $r^2= |z^1|^2 + \cdots + |z^n|^2$.  Write $\iota_r$ for the inclusion map $\iota_r: S_r \rightarrow D$.
 Since $d_{\iota_r^*\geuc}(p,q) \le \pi r$ for $p, q \in S_r$, we have, for any $0 < r < 1/2$,
\begin{equation} \label{sphere}
d_{\iota_r^*g}(p,q) \le \sqrt{C} r^{-1} d_{\iota_r^*\geuc}(p,q)  \le \sqrt{C} \pi, \quad \textrm{for } p,q \in S_r,
\end{equation}
and this gives (i).

For (ii), we use  the estimate (\ref{estimateW}) of Lemma \ref{lemmaradial}.  The
 length of the radial path $\gamma(\lambda) = \lambda x$, for $\lambda \in (0,1]$ is given by
\begin{equation}
\int_0^1 \sqrt{g_{\gamma(\lambda)}(\gamma'(\lambda), \gamma'(\lambda))} d\lambda \le C \sqrt{| x|} \int_0^1 \frac{1}{ \sqrt{\lambda}}  d\lambda \le C' |x|^{1/2}.
\end{equation}
We remark as an aside that one could also prove a weaker version of (ii) (with an upper bound $C|x|^{\delta}$ for $\delta>0$) using Lemma \ref{lemmaest1}.

From (i) and (ii) we immediately obtain an upper bound for the diameter of $D_{1/2} \setminus \{0 \}$  with respect to $\omega$.  Combined with Lemma \ref{lemmack}, this is enough to obtain the diameter bound of $X$.  Indeed it only remains to check the case when $p$ and $q$ are points in $\pi^{-1}(D_{1/2})$, with one or both possibly lying on the exceptional divisor $E$.  Take any sequences of points $p_i$ and $q_i$ in $X$, such that  $p_i \rightarrow p$ and $q_i \rightarrow q$ with respect to any fixed metric, and so that $p_i, q_i$ correspond to points in the punctured ball $D_{1/2} \setminus \{0 \}$.  Since $d_{\omega}(p_i, q_i)$ is uniformly bounded from above, independent of $i$, we can take the limit as $i$ tends to infinity to obtain an upper bound for $d_{\omega} (p,q)$.\qed
\end{proof}

\begin{remark}  As $\lambda \rightarrow 0^+$ the path $\gamma(\lambda)$ in Lemma \ref{lemmakey}, regarded as a path in $X$, tends to a point $x_0 \in E$ determined by $x \in D_{1/2} \setminus \{0 \}$.  Indeed $x$ is contained in a complex line in $\mathbb{C}^n$, to which corresponds a point  $l=[l^1, \ldots, l^n] \in \mathbb{P}^{n-1}$.  The point $x_0\in E$ is the point $(0,l) \in \tilde{D}$, with the notation of (\ref{Dtilde}).
\end{remark}

\section{Gromov-Hausdorff convergence} \label{sectionGH}

We now prove parts (ii) and (iii) in the definition of canonical surgical contraction, under the assumptions of Theorem \ref{thmblowup} (again supposing for simplicity that we only have one exceptional divisor $E$).  We define a compact metric space $(Y, d_T)$ and show that $(X,g(t))$ converges in the Gromov-Hausdorff sense to this metric space.

From Lemma \ref{lemmack} we have established part (i) in the definition of canonical surgical contraction, that $\omega(t)$ converges smoothly on compact subsets of $X \setminus E$ to a $\omega_T$, a K\"ahler metric on $X\setminus E$.
Recall that  $X \setminus E$ can be identified with $Y \setminus \{ y_0 \}$ via the map $\pi$.  Abusing notation, we will write $\omega_T$ for the smooth K\"ahler form $(\pi|_{X\setminus E}^{-1})^* \omega_T$ on $Y \setminus \{ y_0 \}$, and $g_T$ for the corresponding K\"ahler metric.  We extend  $g_T$ to a nonnegative (1,1)-tensor  $\tilde{g}_T$ on the whole space $Y$ by setting $\tilde{g}_T|_{y_0}( \cdot, \cdot) =0$.  Of course, $\tilde{g}_T$ may be discontinuous at $y_0$.

We use $\tilde{g}_T$ to define a distance function $d_T$ on $Y$ as follows.

\begin{definition}  \label{defndy}   Define a distance function $d_T: Y \times Y \rightarrow \mathbb{R}$ by
\begin{equation}
d_T(y_1, y_2) = \inf_{\gamma}  \int_{0}^1  \sqrt{ \tilde{g}_T ( \gamma'(s), \gamma'(s))}  ds,
\end{equation}
where the infimum is taken over all piecewise smooth paths $\gamma: [0,1] \rightarrow Y$ with $\gamma(0)=y_1$, $\gamma(1)=y_2$.
\end{definition}

We make a couple of remarks about this definition.  First, it is easily seen that   $d_T$ is well-defined: the quantity $d_T(y_1, y_2)$ is finite for all $y_1, y_2 \in Y$.  Indeed, we only need to check the case when $y_1 =y_0$ and $y_2 \neq y_0$.  From Lemma \ref{lemmakey}, the lengths of radial paths in $D_{1/2}$ emanating from $y_0 \in Y$, with respect to $\omega(t)$ are uniformly bounded, independent of $t$.  It follows that there is a curve of finite length with respect to $\tilde{g}_T$ from $y_1$ to a fixed point $y' \in D_{1/2} \setminus \{0 \}$, and hence $d_T(y_1, y_2)$ is finite.
Second, note that $d_T$ is independent of the extension of $g_T$ to $Y$.

We have the following lemma.

\begin{lemma}  $(Y, d_T)$ is a compact metric space.
\end{lemma}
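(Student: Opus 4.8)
The plan is to check first that $d_T$ is a genuine metric on $Y$, and then to deduce compactness by realizing $(Y,d_T)$ as the continuous image of the compact manifold $Y$ equipped with its usual topology. The pseudometric axioms for $d_T$ are all immediate from its definition as an infimum of $\tilde{g}_T$-lengths of paths: symmetry comes from reversing a path, the triangle inequality from concatenation, and $d_T(y,y)=0$ from the constant path. Finiteness of $d_T$ was already noted after Definition \ref{defndy}: for $y_1,y_2\neq y_0$ one connects them inside $Y\setminus\{y_0\}$, where $g_T$ is a smooth metric, and to reach $y_0$ one first runs along a radial segment, which has $\tilde{g}_T$-length at most $C|y'|^{1/2}$ by Lemma \ref{lemmakey}(ii).

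The one point with real content is that $d_T$ separates points. Here I would use Lemma \ref{lemmack}: part (i) gives $\omega(t)\ge c\,\pi^*\omega_Y$ on $X$ for a uniform $c>0$, and part (iv) gives $\omega(t)\to\omega_T$ in $C^\infty$ on compact subsets of $X\setminus E$, so passing to the limit yields $\omega_T\ge c\,\pi^*\omega_Y$ on $X\setminus E$, i.e. $g_T\ge c\,g_Y$ on $Y\setminus\{y_0\}$. Now let $\gamma\colon[0,1]\to Y$ be any piecewise smooth path and set $Z=\{s:\gamma(s)=y_0\}$. On $Z$ one has $\tilde{g}_T(\gamma'(s),\gamma'(s))=0$ by definition of $\tilde{g}_T$, while $\gamma'(s)=0$ for almost every $s\in Z$ (a piecewise $C^1$ map has vanishing derivative a.e.\ on any level set); on $[0,1]\setminus Z$ one has $\tilde{g}_T(\gamma',\gamma')=g_T(\gamma',\gamma')\ge c\,g_Y(\gamma',\gamma')$. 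Hence $\tilde{g}_T(\gamma',\gamma')\ge c\,g_Y(\gamma',\gamma')$ almost everywhere, so
\[
L_{\tilde{g}_T}(\gamma)\;\ge\;\sqrt{c}\;L_{g_Y}(\gamma)\;\ge\;\sqrt{c}\;d_{g_Y}(\gamma(0),\gamma(1)).
\]
Taking the infimum over $\gamma$ gives $d_T(y_1,y_2)\ge\sqrt{c}\,d_{g_Y}(y_1,y_2)$, and since $g_Y$ is a smooth Riemannian metric on $Y$ its distance function $d_{g_Y}$ separates points; hence so does $d_T$. Thus $(Y,d_T)$ is a metric space.

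It remains to prove compactness. I would show the identity map $\iota\colon(Y,\tau)\to(Y,d_T)$ is continuous, where $\tau$ is the manifold topology. At a point $y\neq y_0$, pick a coordinate ball $B$ around $y$ with $\overline B\subset Y\setminus\{y_0\}$; on $\overline B$ the smooth metric satisfies $g_T\le C_0\,\oeuc$, so $d_T(y,y')\le\sqrt{C_0}\,|y-y'|$ for $y'\in B$, proving continuity at $y$. At $y=y_0$, take $y'$ in the fixed chart $D$ with $y'\neq 0$; the radial segment $\lambda\mapsto\lambda y'$, $\lambda\in[0,1]$, has $g(t)$-length at most $C|y'|^{1/2}$ uniformly in $t$ by Lemma \ref{lemmakey}(ii), and since $g(t)\to g_T$ smoothly on every compact subset of $D_{1/2}\setminus\{0\}$ (with the point $\lambda=0$ contributing nothing to the length integral) its $\tilde{g}_T$-length is also at most $C|y'|^{1/2}$; hence $d_T(y_0,y')\le C|y'|^{1/2}\to 0$ as $y'\to y_0$. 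So $\iota$ is a continuous surjection from the compact space $(Y,\tau)$ onto $(Y,d_T)$, and therefore $(Y,d_T)$ is compact. (As a bonus, $\iota$ is then a continuous bijection from a compact space to a Hausdorff space, hence a homeomorphism, which is part (ii) of Definition \ref{defn}.)

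The only genuine obstacle is the separation-of-points step: one must make sure the degeneracy of $\tilde{g}_T$ at $y_0$ does not create distinct points at zero $d_T$-distance. Lemma \ref{lemmack}(i) is exactly what makes this work, since it survives the limit $t\to T$ to give a uniform lower bound $g_T\ge c\,g_Y$ away from $y_0$; the measure-zero observation at $y_0$ then completes the estimate $d_T\ge\sqrt{c}\,d_{g_Y}$. Everything else is formal or is a direct quotation of Lemmas \ref{lemmack} and \ref{lemmakey}.
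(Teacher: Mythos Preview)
Your proof is correct, and differs from the paper's in two respects worth noting.

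For separation of points, the paper argues locally: given $y_1\neq y_2$ with, say, $y_1\neq y_0$, it takes a small ball $B$ around $y_1$ in $Y\setminus\{y_0\}$ not containing $y_2$, and observes that since $g_T$ is a smooth K\"ahler metric on $B$, any path leaving $B$ has $g_T$-length bounded below by a positive constant. You instead invoke Lemma~\ref{lemmack}(i) globally, pass to the limit to obtain $g_T\ge c\,g_Y$ on $Y\setminus\{y_0\}$, and deduce the uniform inequality $d_T\ge\sqrt{c}\,d_{g_Y}$. This is strictly stronger than what the paper proves here (and the paper only appeals to Lemma~\ref{lemmack}(i) later, after Lemma~\ref{diam}, to finish part (ii) of Definition~\ref{defn}); your measure-zero argument at the level set $\gamma^{-1}(y_0)$ is correct.

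For compactness, the paper proceeds by sequential compactness: extract a $d_{\omega_Y}$-convergent subsequence and show it is $d_T$-convergent, splitting into the cases $y_\infty=y_0$ (use radial paths) and $y_\infty\neq y_0$ (use local equivalence of $g_T$ and $g_Y$). You instead show directly that the identity $(Y,\tau)\to(Y,d_T)$ is continuous and appeal to ``continuous image of compact is compact''; the two case analyses are the same in content, only the packaging differs. Your route has the pleasant side effect of yielding the homeomorphism statement in Definition~\ref{defn}(ii) immediately, whereas the paper assembles it from this lemma together with Lemmas~\ref{lemmack} and \ref{diam}.
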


\begin{proof}  First we show that $d_T$ defines a metric on $Y$.  The only somewhat non-trivial statement we need to check is the implication
\begin{equation} \label{im}
y_1 \neq y_2 \qquad \Longrightarrow \qquad  d_T (y_1, y_2) >0.
\end{equation}
To see (\ref{im}), we may assume without loss of generality that $y_1 \neq y_0$.  Then there is a small coordinate ball $B$ centered at $y_1$ with $B \subset Y \setminus \{ y_0 \}$ and $y_2 \notin B$.  Since $g_T$ is a smooth K\"ahler metric on $B$, the length of any path from $y_1$ to the boundary of $B$ is bounded below by a strictly positive constant.  It follows that $d_T(y_1, y_2)>0$.

Now we will show that $(Y, d_T)$ is compact. It suffices to show that every sequence of points $\{y_j\}$ in $Y$ has a convergent subsequence with respect to $d_T$.
Let $d_{\omega_Y}$ be the distance function on $Y$ associated to  the smooth Kahler form $\omega_Y$ on $Y$.  Since $(Y,d_{\omega_Y})$ is compact, there is a  subsequence $\{ y_{j_k} \}$ of $\{ y_j \}$ which converges in the $d_{\omega_Y}$ metric to a point $y_{\infty}$ in $Y$.  We will show that $\{ y_{j_k} \}$ converges to $y_{\infty}$ in the $d_T$ metric.

Suppose $y_{\infty} = y_0$.  Then for $k$ sufficiently large, $\{ y_{j_k} \}$ is contained in the ball $D \subset Y$.  The length of the radial path from $y_0$ to $y_{j_k}$ with respect to $d_T$ tends to zero as $k$ tends to infinity, by Lemma \ref{lemmakey}.  Hence $d_T (y_{j_k}, y_0 ) \rightarrow 0$ as $k \rightarrow \infty$.

Now suppose $y_\infty \neq y_0$.  For $k$ sufficiently large, the points $y_{j_k}$ lie in an open geodesic ball $B_{\varepsilon}(y_{\infty})$ with respect to $g_Y$, centered at $y_{\infty}$ of radius $\varepsilon>0$,    with $\ov{B_{\varepsilon}(y_{\infty})} \subset Y \setminus \{ y_0\}$.   But in this geodesic ball, the metric $g_T$ is uniformly equivalent to the metric $g_Y$.   Since $d_{\omega_Y} (y_{j_k}, y_{\infty}) \rightarrow 0$ it follows that $d_T (y_{j_k}, y_{\infty}) \rightarrow 0$. \qed
\end{proof}

Now let $d_{\omega}= d_{\omega(t)}$ be the distance function on $X$ associated to the evolving K\"ahler metric $\omega$.  We prove the following:

\begin{lemma} \label{diameterE} There exists a uniform constant $C$ such that for any points $p, q \in E$, and any $t \in [0,T)$,
\begin{equation} \label{dpq}
d_{\omega}(p,q) \le C(T-t)^{1/3}.
\end{equation}
\end{lemma}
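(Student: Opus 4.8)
The plan is to estimate the distance between two points $p, q \in E$ by connecting them through the punctured ball $D_{1/2} \setminus \{0\}$ and optimizing the choice of an intermediate radius. Since $E$ is irreducible and connected, and since the exceptional divisor corresponds to $\mathbb{P}^{n-1}$, any two points $p, q$ of $E$ can be reached from points at radius $r$ in $D_{1/2}$ by radial paths. Concretely, pick points $x_p, x_q \in D_{1/2} \setminus \{0\}$ lying on complex lines through the origin corresponding to $p$ and $q$ respectively (via the identification in the Remark following Lemma~\ref{lemmakey}), with $|x_p| = |x_q| = r$ for a radius $r = r(t) \in (0, 1/2)$ to be chosen. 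First I would connect $p$ to $x_p$ by a radial path: by Lemma~\ref{lemmakey}(ii) its length with respect to $\omega$ is at most $C r^{1/2}$. Similarly $q$ to $x_q$ has length at most $C r^{1/2}$. Then connect $x_p$ to $x_q$ by a path lying inside the sphere $S_r$; by Lemma~\ref{lemmakey}(i) the diameter of $S_r$ with respect to $\omega$ is uniformly bounded \emph{independent of $r$}, so this costs at most a constant $C$. That alone only gives $d_\omega(p,q) \le C$, which is not good enough — we need the decay $(T-t)^{1/3}$.

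The improvement must come from the fact that as $t \to T^-$, the metric is degenerating near $E$, so we should be able to take $r$ small and pick up extra smallness. The key extra input is a bound on the length of the spherical path $S_r$ that \emph{does} depend on $r$ and also on $t$. Rather than using the crude bound $\omega \le (C/r^2)\oeuc$ from Lemma~\ref{lemmaest1}(i) that gives the $r$-independent estimate, I would instead use Lemma~\ref{lemmaest1}(ii): $\omega \le (C/|s|_h^{2(1-\delta)})\,\omega_0 = (C/r^{2(1-\delta)})\,\omega_0$ on $D_{1/2}$. This gives, for the spherical path of Euclidean length $\le \pi r$,
\begin{equation}
d_{\iota_r^* g}(x_p, x_q) \le \sqrt{C}\, r^{-(1-\delta)} \cdot \pi r = C' r^{\delta}.
\end{equation}
So combining, $d_\omega(p,q) \le C(r^{1/2} + r^\delta) \le C r^{\min(1/2,\delta)}$, which is small as $r \to 0$. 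But this still does not involve $t$ at all. To get the $(T-t)$ dependence I would bring in the $L^\infty$-type control from Lemma~\ref{Linfinity}: the volume form satisfies $\omega^n \le C\Omega$, and more importantly $\omega \ge c\,\pi^*\omega_Y$ only gives a lower bound. The $t$-dependence should instead enter through a quantitative version of how the radial length scales: reexamining the proof of Lemma~\ref{lemmakey}(ii), the radial path $\gamma(\lambda) = \lambda x$ for $\lambda \in (0,1]$ has $\omega$-length $\le C|x|^{1/2}$, but if we only traverse $\lambda \in (\mu, 1]$ for the portion of the path \emph{outside} radius $\mu r$... Actually the cleaner route: choose the intermediate radius $r$ to \emph{depend on $t$}, say $r = (T-t)^a$ for an exponent $a$ to be optimized, and then one of the two competing estimates — the radial length $\sim r^{1/2} = (T-t)^{a/2}$ and the spherical length $\sim r^\delta$ — must be balanced against a third estimate that gets \emph{worse} as $r \to 0$ unless $t$ is close to $T$.

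The third estimate, which is the actual point, is that near $E$ the evolving metric is controlled by the degenerating reference metric: from $\hat\omega_t = \frac{1}{T}((T-t)\omega_0 + t\,\pi^*\omega_Y)$ and the bound on $\dot\varphi$, combined with Lemma~\ref{lemmaest1}, one obtains that on the region where $|s|_h^2 = r^2 \le (T-t)$, say, the metric $\omega$ is comparable to $(T-t)$ times something bounded, contributing an extra factor. The hard part will be making this last comparison precise — extracting a quantitative bound of the form $\omega \le C(T-t)^{b}\,|s|_h^{-c}\,\omega_0$ or similar on an appropriate shrinking neighborhood of $E$, and then optimizing the radius $r$ (and hence the split between "radial cost" $\sim (T-t)^{?}$, "spherical cost" $\sim (T-t)^{?}$, and "crossing the collapsing region cost" $\sim (T-t)^{?}$) to land exactly on the exponent $1/3$. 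I expect the exponent $1/3$ arises as the solution of a simple balancing equation among these three competing powers of $(T-t)$; the choice $r \sim (T-t)^{2/3}$ so that $r^{1/2} \sim (T-t)^{1/3}$ is the natural guess, with the remaining terms needing to be shown to be $O((T-t)^{1/3})$ as well. Once the three length contributions are each bounded by $C(T-t)^{1/3}$, the triangle inequality $d_\omega(p,q) \le d_\omega(p, x_p) + d_\omega(x_p, x_q) + d_\omega(x_q, q)$ finishes the proof, with the outside-$D_{1/2}$ portion being irrelevant since everything stays within the ball.
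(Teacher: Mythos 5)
There is a genuine gap, and you have in fact put your finger on it yourself: every estimate you invoke (Lemma~\ref{lemmaest1}, Lemma~\ref{lemmaradial}, Lemma~\ref{lemmakey}) is uniform in $t$, and since $\omega(t)$ converges smoothly away from $E$ to the fixed nondegenerate metric $\omega_T$, no choice of an intermediate radius $r=r(t)$ can convert $t$-independent bounds into a quantity that decays as $t\to T^-$. The ``third estimate'' you postulate, a pointwise bound of the form $\omega \le C(T-t)^{b}|s|_h^{-c}\omega_0$ on a shrinking neighborhood of $E$, is exactly the missing ingredient, and it is neither proved in the paper nor obtainable from the stated lemmas; it would be a much stronger statement than what is actually available. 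The $t$-dependence in the paper enters cohomologically: since $\pi^*\omega_Y|_E=0$, one has $\int_E \omega(t)=\frac{T-t}{T}\int_E\omega_0\le C(T-t)$ (and $\int_S\omega(t)\le C(T-t)$ for a projective line $S\subset E\cong\mathbb{P}^{n-1}$ through $p,q$ when $n>2$). Setting $\varepsilon=(T-t)^{1/3}$, one integrates $\tr{\hat{g}_0}{g}$ over a thin rectangle of width $2\varepsilon$ inside a coordinate chart of $E$ (or of $S$) and uses Fubini to find a horizontal segment at some height $y'$ with $\int_0^{x_0}(\tr{\hat{g}_0}{g})(x,y')\,dx\le C(T-t)^{2/3}$; Cauchy--Schwarz then bounds the $\omega$-length of that segment by $C(T-t)^{1/3}$. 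The endpoints $p,p'$ and $q,q'$, which are only $O(\varepsilon)$ apart in the fixed metric on $E$, are then joined by detouring slightly off the divisor using the radial estimate (cost arbitrarily small for small $r$) and the spherical estimate at radius $r$ (cost $\le C\varepsilon$, because the Euclidean distance between the corresponding points of $S_r\cap L_p$ and $S_r\cap L_{p'}$ is $O(r\varepsilon)$). The exponent $1/3$ is precisely the balance $\varepsilon \sim \left((T-t)/\varepsilon\right)^{1/2}$; it does not come from any pointwise collapsing estimate near $E$.

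A secondary but concrete error: your claimed improvement of the spherical bound to $C'r^{\delta}$ misapplies Lemma~\ref{lemmaest1}(ii) by treating $\omega_0$ as comparable to $\oeuc$ near $E$. By Lemma~\ref{lemmacomp}, $\omega_0$ is only bounded above by $C'|s|_h^{-2}\pi^*\omega_Y$, and in the angular (projective) directions tangent to the spheres $S_r$ it is of unit size (it restricts to a fixed metric on $\mathbb{P}^{n-1}$), not of size comparable to $\oeuc$, which degenerates quadratically there. So a path on $S_r$ of Euclidean length $\pi r$ has $\omega_0$-length of order $1$, and Lemma~\ref{lemmaest1}(ii) then gives only $Cr^{\delta-1}$, which blows up; the correct uniform bound for the spherical path is the $O(1)$ estimate coming from Lemma~\ref{lemmaest1}(i), as in the paper's inequality (\ref{sphere}). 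Thus even the $r$-decay you hoped for in the spherical leg is not available, which reinforces that the argument must be run inside $E$ with the area bound rather than off the divisor.
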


\begin{proof}   We first deal with the case of complex dimension 2.  In this case we can identify $E$ with $\mathbb{P}^1$ and
\begin{equation} \label{n2}
\int_E \omega(t) =  \int_E \frac{1}{T} \left((T-t) \omega_0 + t \pi^* \omega_Y \right) =\frac{T-t}{T} \int_E  \hat{\omega}_0 \le C(T-t).
\end{equation}
Write $\varepsilon = (T-t)^{1/3}>0$, which we may assume is sufficiently small.
  Without loss of generality, we may assume that the two points $p$ and $q$ in $E \cong \mathbb{P}^1$ lie in a fixed coordinate chart $U$ whose image under the holomorphic coordinate $z=x+\sqrt{-1}y$ is  a ball of radius $2$ in $\mathbb{C}= \mathbb{R}^2$ with respect to the Euclidean metric.  Moreover, we may assume that $p$ is represented by the origin in $\mathbb{C}= \mathbb{R}^2$, and $q$ is represented by the point $(x_0,0)$ with $0<x_0<1$ and that the rectangle
\begin{equation}
\mathcal{R} = \{ (x,y) \in \mathbb{R}^2 \ | \ 0 \le x \le x_0, \ -\varepsilon \le y \le \ve \} \subset \mathbb{R}^2 = \mathbb{C},
\end{equation}
is contained in the image of $U$.  Now in $\mathcal{R}$, the fixed metric $g_0$ induced from the metric $g_0$ on $X$ is uniformly equivalent to the Euclidean metric.  Thus from (\ref{n2}),
\begin{equation}
\int_{-\ve}^{\ve} \left( \int_{0}^{x_0} (\tr{\hat{g}_0}{g}) dx \right) dy  = \int_{\mathcal{R}} (\tr{\hat{g}_0}{g}) dxdy \le  C(T-t).
\end{equation}
Hence there exists $y' \in (-\ve, \ve)$ such that
\begin{equation}
\int_0^{x_0} (\tr{\hat{g}_0}g)(x, y') dx \le \frac{C}{\ve} (T-t) = C (T-t)^{2/3}.
\end{equation}
Now let $p'$ and $q'$ be the points represented by coordinates $(0,y')$ and $(x_0, y')$.  Then, considering the horizontal path $s \mapsto (s, y')$  between $p'$ and $q'$, we have
\begin{eqnarray} \nonumber
d_{\omega}(p',q') & \le & \int_0^{x_0} \left(\sqrt{ g( \partial_x, \partial_x)} \right)(x,y') dx \\ \nonumber 
& = & \int_0^{x_0} \left(\sqrt{ \tr{\hat{g}_0}{g}} \sqrt{ \hat{g}_0( \partial_x, \partial_x)} \right)(x,y') dx\\ \nonumber
& \le & \left( \int_0^{x_0} (\tr{\hat{g}_0}{g})(x,y') dx\right)^{1/2} \left( \int_0^{x_0} \left( \hat{g}_0 ( \partial_x, \partial_x) \right)(x,y')dx \right)^{1/2} \\
& \le & C(T-t)^{1/3}.
\end{eqnarray}
We now claim that
\begin{equation} \label{pq}
d_{\omega}(p,p') + d_{\omega}(q,q') \le C \varepsilon,
\end{equation}
for some constant $C$, and then this will give (\ref{dpq}).  To see (\ref{pq}) we will work in the open ball $D \subset Y$ rather than on $X$.  The points $p$ and $p'$ in $E= \mathbb{P}^1$ can be identified with complex lines $L_p$ and $L_{p'}$ in $\mathbb{C}^2$.  We will also write $L_p$ and $L_{p'}$ for the intersection of these lines with $D \subset \mathbb{C}^2$.  Note that, for example,  a path in $L_p \subset D$ emanating from the origin corresponds in $X$ to a path which starts at $p \in E$ and is transversal to $E$.

For any $0<r < 1/2$, to be chosen later, we can find points $\tilde{p}$ and  $\tilde{p}'$ on $S_r \cap L_p$ and $S_r \cap L_{p'}$ respectively, where $S_r$ is the sphere of radius $r$ centered at the origin, such that
\begin{equation}
d_{\omega} (\tilde{p}, \tilde{p}') \le C \varepsilon.
\end{equation}
This follows from the same reasoning as in the argument of (\ref{sphere}), together with the fact that the distance between $p$ and $p'$ in the standard metric on $\mathbb{P}^1$ is bounded by a constant multiple of $\varepsilon$.  On the other hand choosing $r$ sufficiently  small we can see that, by the estimate of $g$ in the radial direction, the distances from $p$ to $\tilde{p}$ and from $p'$ to $\tilde{p}'$ can be made arbitrarily small (and thus less than a multiple of $\varepsilon$).  Then
\begin{equation}
d_{\omega}(p,p') \le d_{\omega}(p, \tilde{p}) + d_{\omega}(\tilde{p}, \tilde{p}') + d_{\omega} (\tilde{p}', p') \le C \varepsilon.
\end{equation}
Thus (\ref{pq}) follows after applying the same argument to $q$.

For the case of complex dimension $n>2$, we identify $E$ with $\mathbb{P}^{n-1}$.   Given $p, q$ in $\mathbb{P}^{n-1}$ there exists a projective line $S \cong \mathbb{P}^1$ containing the points $p$ and $q$.  Then
\begin{equation} \label{S}
\int_S \omega(t) \le C(T-t),
\end{equation}
and we obtain the same result by a similar argument to the one given above. This completes the proof of the lemma. \qed
\end{proof}

\begin{lemma} \label{diam}  For any $\varepsilon>0$ there exists $\delta_0>0$ and $T_0 \in [0,T)$ such that
\begin{equation} \label{smallballdT}
\emph{diam}_{d_T}  D_{\delta_0} < \varepsilon.
\end{equation}
and
\begin{equation} \label{smallball1}
\emph{diam}_{{\omega(t)}} \pi^{-1}(D_{\delta_0})  < \varepsilon, \quad \textrm{for all } \ t \in [T_0,T).
\end{equation}
\end{lemma}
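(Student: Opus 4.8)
The plan is to prove the two diameter estimates using the tools assembled in Section \ref{sectex}, treating the $\omega(t)$-estimate (\ref{smallball1}) as the primary statement; the $d_T$-estimate (\ref{smallballdT}) will follow by taking $t \to T^-$ since $d_T$ is the limiting distance. First I would fix $\varepsilon > 0$ and recall the two mechanisms by which a point in $\pi^{-1}(D_{\delta_0})$ can be connected cheaply to a fixed reference point: radially along a path $\gamma(\lambda) = \lambda x$, and along spheres $S_r$. By Lemma \ref{lemmakey}(ii), the $\omega(t)$-length of the radial path from the origin (i.e.\ from $E$, using the identification of $\pi^{-1}(D_{1/2}\setminus\{0\})$ with $D_{1/2}\setminus\{0\}$) out to a point $x$ with $|x| \le \delta_0$ is at most $C|x|^{1/2} \le C\delta_0^{1/2}$. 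So any point of $\pi^{-1}(D_{\delta_0}\setminus\{0\})$ is within $\omega(t)$-distance $C\delta_0^{1/2}$ of $E$, uniformly in $t$. Combined with Lemma \ref{diameterE}, which bounds the $\omega(t)$-diameter of $E$ itself by $C(T-t)^{1/3}$, I get
\begin{equation}
\textrm{diam}_{\omega(t)} \pi^{-1}(D_{\delta_0}) \le 2C\delta_0^{1/2} + C(T-t)^{1/3}.
\end{equation}

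Now I would choose $\delta_0$ small enough that $2C\delta_0^{1/2} < \varepsilon/2$, and then choose $T_0 \in [0,T)$ close enough to $T$ that $C(T-t)^{1/3} < \varepsilon/2$ for all $t \in [T_0, T)$; this gives (\ref{smallball1}). (One should also handle points of $E$ themselves, which is exactly the content of Lemma \ref{diameterE}, and points lying on $E$ that arise as limits of radial paths — but this is routine, via sequences, just as at the end of the proof of Lemma \ref{lemmakey}.) For the first estimate (\ref{smallballdT}), I would argue that for any two points $y_1, y_2 \in D_{\delta_0} \subset Y$, their $d_T$-distance is the infimum of $\tilde{g}_T$-lengths of paths, and since $\tilde{g}_T = \lim_{t\to T^-} g(t)$ smoothly away from $y_0$, a path achieving $\omega(t)$-length close to $d_{\omega(t)}$ (which we can take to avoid $y_0$, or to be radial through it) has $\tilde g_T$-length bounded by $\liminf_{t\to T^-}$ of its $\omega(t)$-length. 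More directly: the radial-path and spherical-path length bounds of Lemmas \ref{lemmaest1}, \ref{lemmaradial}, \ref{lemmakey} hold uniformly in $t$ and hence pass to the limiting tensor $\tilde g_T$, so the same computation gives $\textrm{diam}_{d_T} D_{\delta_0} \le 2C\delta_0^{1/2} < \varepsilon$ once $\delta_0$ is chosen as above. Note the $(T-t)^{1/3}$ term is absent here because at $t=T$ the exceptional divisor has collapsed to the point $y_0$, so the "diameter of $E$" contribution vanishes.

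The main obstacle I anticipate is the bookkeeping around the point $y_0$ (equivalently $E$) and making the "connect to $E$, then move within $E$" strategy rigorous when one or both endpoints lie on $E$ or at $y_0$. For (\ref{smallball1}) this means: given $p \in \pi^{-1}(D_{\delta_0})$ with $\pi(p) \ne 0$, the radial path from $p$ inward limits to a well-defined point $x_0 \in E$ (as noted in the Remark following Lemma \ref{lemmakey}), and its total $\omega(t)$-length is still $\le C|x|^{1/2}$ by monotone convergence / continuity of the length integral; then Lemma \ref{diameterE} connects any two such limit points within $E$. For endpoints already on $E$, one approximates by nearby points off $E$ and passes to the limit using that $d_{\omega(t)}$ is continuous with respect to a fixed background metric, exactly as in the last paragraph of the proof of Lemma \ref{lemmakey}. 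For (\ref{smallballdT}) the analogous care is needed at $y_0 \in Y$, where $\tilde g_T$ is only a nonnegative tensor, possibly discontinuous; but Definition \ref{defndy} and the finiteness discussion right after it already show radial paths into $y_0$ have finite, indeed arbitrarily small, $\tilde g_T$-length for $y_0$-endpoints near $y_0$, so the argument closes. I would present (\ref{smallball1}) in full and then remark that (\ref{smallballdT}) follows by the identical estimates applied to $\tilde g_T$, or alternatively by lower semicontinuity of length under the smooth convergence $g(t) \to g_T$ on $X \setminus E$ together with the uniform radial bound.
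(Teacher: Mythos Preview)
Your proposal is correct and follows essentially the same approach as the paper: radial paths from Lemma \ref{lemmakey}(ii) bound the distance from any point of $\pi^{-1}(D_{\delta_0})$ to $E$ by $C\delta_0^{1/2}$, Lemma \ref{diameterE} handles the diameter of $E$ itself for (\ref{smallball1}), and the radial bounds pass to $\tilde g_T$ via smooth convergence on compact subsets of $D_{\delta_0}\setminus\{0\}$ for (\ref{smallballdT}). The paper's proof is in fact just a two-sentence sketch invoking exactly these lemmas, so your write-up is if anything more detailed; the only cosmetic difference is that the paper treats (\ref{smallballdT}) first.
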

\begin{proof} For (\ref{smallballdT}), let $p,q \in D_{\delta_0}$.  Applying Lemma \ref{lemmakey},  the lengths of radial paths $\gamma_p, \gamma_q: [0,1] \rightarrow D_{\delta_0}$  from the origin to $p$, $q$ respectively with respect to $\omega(t)$ are each less than $\ve/4$ say after choosing  $\delta_0$ small enough.  Since $\omega(t)$ converges to $\omega_T$ smoothly on compact subsets of $D_{\delta_0} \setminus \{ 0 \}$, we obtain $d_T(p,q)  \le \ve/2 < \ve$.

For (\ref{smallball1}), combine Lemma \ref{lemmakey} with Lemma \ref{diameterE}.
\qed
\end{proof}

Then, using in addition the first part of Lemma \ref{lemmack}, we have established part (ii) in the definition of canonical surgical contraction.

\begin{remark}  In fact, we can prove the following statement:  for any $\varepsilon>0$ there exists $\delta_0>0$ and $T_0 \in [0,T)$ such that
\begin{equation} \label{smallball2}
\emph{diam}_{\omega(t)} \pi^{-1}(D_{\delta_0} \setminus \{0 \}) < \varepsilon, \quad \textrm{for all } \ t \in [T_0,T).
\end{equation}
The proof of this statement uses the ideas of Lemma \ref{diameterE} together with an application of Stoke's Theorem.  If we could replace the conclusion (\ref{smallball2}) by the assertion
\begin{equation}
\emph{diam}_{\omega_T} \pi^{-1}(D_{\delta_0} \setminus \{0 \}) < \varepsilon,
\end{equation}
then it would follow that $(Y, d_T)$ can be identified with the metric completion of $(X \setminus E, g_T)$  (see Remark \ref{remark1}).
\end{remark}

We now show that $(X,g(t))$ converges to the compact metric space $(Y, d_T)$ in the sense of Gromov-Hausdorff, as $t \rightarrow T^-$.
We recall the definition of Gromov-Hausdorff distance $d_{\textrm{GH}}(X,Y)$ between two metric spaces $(X, d_X)$ and $(Y,d_Y)$, using the  characterization  given in \cite{F}.   Define  $d_{\textrm{GH}}(X,Y)$ to be the infimum of all $\varepsilon>0$ such that the following holds.   There exist maps $F: X \rightarrow Y$ and $G: Y \rightarrow X$   such that
\begin{equation} \label{GH1}
|d_X(x_1, x_2) - d_Y (F(x_1), F(x_2))|\leq \varepsilon, \quad \textrm{for all } x_1, x_2 \in X
\end{equation} and
\begin{equation} \label{GH2}
d_X(x, G\circ F(x)) \le \varepsilon, \quad \textrm{for all } x \in X
\end{equation}
and the two symmetric properties for $Y$ also hold.   We do not require the maps $F$ and $G$ to be continuous.

Fix $\varepsilon>0$.  We will show that for $t$ sufficiently close to $T$  the Gromov-Hausdorff distance between $(X, d_{\omega(t)})$ and $(Y, d_Y)$ is less than $\varepsilon$.
Let $F: X \rightarrow Y$ be the blow-down map $\pi$ and let $G: Y\rightarrow X$ be a map with $G=(\pi|_{X\setminus E})^{-1}$ on $Y\setminus \{y_0 \}$ and $G(y_0)$ chosen to be any point on $E$.

First observe that, by definition,
\begin{equation} \label{ghyy}
d_T (y, F \circ G(y))=0, \quad \textrm{for all } y \in Y,
\end{equation}
and
\begin{equation} \label{ghxx1}
d_{\omega(t)} (x, G \circ F(x))=0, \quad \textrm{for all } x \in X \setminus E.
\end{equation}
Moreover, from Lemma \ref{diameterE} we have, for $t$ sufficiently close to $T$,
\begin{equation} \label{ghxx2}
d_{\omega(t)} (x, G \circ F(x)) < \varepsilon, \quad \textrm{for all } x \in  E.
\end{equation}

We will now show that for $t$ sufficiently close to $T$,
\begin{equation} \label{g1}
| d_{\omega(t)} (x_1, x_2) - d_T(F(x_1), F(x_2)) | \le \varepsilon, \quad \textrm{for all } x_1, x_2 \in X.
\end{equation}
By Lemma \ref{diam}, there exist $\delta_0>0$ and $T_0\in [0,T)$ such that for all $x_1, x_2 \in \pi^{-1}(\overline{D_{\delta_0}})$ and all $t \in [T_0,T)$ we have
\begin{equation} \label{g2}
 d_{\omega(t)} (x_1, x_2) +  d_T(F(x_1), F(x_2))  \le \frac{\varepsilon}{2}.
\end{equation}
Hence we may assume without loss of generality that $x_1, x_2$ lie in $X \setminus \pi^{-1}(D_{\delta_0})$.  From the definition of $d_T$, there exists a smooth path $\gamma: [0,1] \rightarrow Y$ with $\gamma(0)= F(x_1)$ and $\gamma(1)= F(x_2)$, such that
\begin{equation}
\left| d_T (F(x_1), F(x_2)) - \int_0^1 \sqrt{ \tilde{g}_T (\gamma'(s), \gamma'(s))} ds \right| < \frac{\varepsilon}{4}.
\end{equation}
Let $s_1$ be the smallest value in $[0,1]$ such that $\gamma(s_1)$ lies in $\overline{D_{\delta_0}}$, and let $s_2$ be the largest value in $[0,1]$ such that $\gamma(s_2)$ lies in $\overline{D_{\delta_0}}$.  Then we have
\begin{eqnarray} \nonumber
d_{\omega(t)} (x_1, x_2) & \le & \int_0^{s_1} \sqrt{g(t)( \gamma'(s), \gamma'(s))} ds + \int_{s_2}^1 \sqrt{g(t)(\gamma'(s), \gamma'(s))}ds \\ \label{g3}
&& \mbox{} + d_{\omega(t)} (F^{-1}(\gamma(s_1)), F^{-1}(\gamma(s_2))),
\end{eqnarray}
From (\ref{g2}), we have for $t\in [T_0,T)$,
\begin{equation} \label{g35}
 d_{\omega(t)} (F^{-1}(\gamma(s_1)), F^{-1}(\gamma(s_2))) < \frac{\varepsilon}{2}.
\end{equation}
Since $g(t)$ converges to $g_T$ uniformly on compact subsets of $X\setminus E$, we can choose $T_0$ sufficiently close to $T$ (depending on $\delta_0$) such that
\begin{equation} \label{g4}
\int_0^{s_1} \sqrt{g(t)( \gamma'(s), \gamma'(s))} ds + \int_{s_2}^1 \sqrt{g(t)(\gamma'(s), \gamma'(s))}ds \le \frac{\varepsilon}{4} + \int_0^1 \sqrt{ \tilde{g}_T (\gamma'(s), \gamma'(s))} ds.
\end{equation}
Combining (\ref{g3}) with (\ref{g35}), (\ref{g4}) and (\ref{g1}) we obtain
\begin{equation} \label{g6}
d_{\omega(t)}(x_1,x_2) - d_T(F(x_1), F(x_2)) \le \varepsilon.
\end{equation}

For the lower bound of $d_{\omega(t)}(x_1,x_2) - d_T(F(x_1), F(x_2))$, choose a path $\tilde{\gamma}: [0,1] \rightarrow X$, depending on $t$, with $\tilde{\gamma}(0)=x_1$ and $\tilde{\gamma}(1)=x_2$ such that
\begin{equation}
\left| d_{\omega(t)} (x_1, x_2) - \int_0^1 \sqrt{g(t)(\tilde{\gamma}'(s), \tilde{\gamma}'(s))} ds \right| < \frac{\varepsilon}{4}.
\end{equation}
We can then argue in a similar way as above (considering the first and last points of the path $\tilde{\gamma}$ that lie in $\pi^{-1}(\ov{D_{\delta}})$) to obtain for $t$ sufficiently close to $T$,
\begin{equation}
d_{\omega(t)}(x_1,x_2) - d_T(F(x_1), F(x_2)) \ge - \varepsilon.
\end{equation}
Thus for $t$ sufficiently close to $T$ we obtain for all $x_1, x_2$ in $X$,
\begin{equation} \label{ghx}
\left| d_{\omega(t)}(x_1,x_2) - d_T(F(x_1), F(x_2)) \right| \le \varepsilon.
\end{equation}

A similar argument shows that  for all $y_1, y_2 \in Y$,
\begin{equation} \label{ghy}
\left| d_T(y_1, y_2) - d_{\omega(t)}(G(y_1), G(y_2) ) \right| <\varepsilon,
\end{equation}
for $t$ sufficiently close to $T$.
Combining (\ref{ghyy}), (\ref{ghxx1}), (\ref{ghxx2}), (\ref{ghx}), (\ref{ghy}) completes the proof of the Gromov-Hausdorff convergence.  This establishes part (iii) in the definition of canonical surgical contraction under the assumptions of Theorem \ref{thmblowup}.

\pagebreak[3]
\section{Higher order estimates for $\omega(t)$ as $t \rightarrow T^-$} \label{sectionhigher}

Under the assumption of Theorem \ref{thmblowup}, we have 
seen that $(X, g(t))$ converges to $(Y,d_T)$ in the sense of Gromov-Hausdorff as $t\rightarrow T^-$. Furthermore, $\omega(t)$ converges in $C^\infty$ on any compact subset of $X\setminus \cup_{i=1}^k E_i$ to a smooth K\"ahler metric $\omega_T$ on $X\setminus \cup_{i=1}^k E_i$.   In particular, we already have $C^{\infty}$ a priori estimates for $\omega(t)$ away from the divisors $E_i$ as $t$ approaches $T$.   However, to understand how the K\"ahler-Ricci flow can be continued past the singular time we need more precise estimates.

We assume as before that there is only one exceptional divisor $E$.
From Lemmas \ref{lemmack}, \ref{lemmacomp}   and  \ref{lemmaest1}, there exist positive constants $\delta$ and $C$ such that
\begin{equation} \label{so1}
\frac{|s|^2_h}{C}  \omega_0 \le \omega(t) \leq \frac{C}{|s|_h^{2(1-\delta)}} \omega_0.
\end{equation}
For convenience, we assume without loss of generality that $|s|_h \le 1$ on $X$.

We begin with an analogue of Yau's third order estimate \cite{Y1} (see also \cite{C1,Cao,PSS}).  Following \cite{PSS}, we
define an endomorphism $H=H(t)$ of the tangent bundle by $H^i_{\, \ell} = g_0^{i \ov{j}} g_{\ell \ov{j}}$ and consider the quantity $S=S(t)$ given by
\begin{equation}
S=  |\nabla H \, H^{-1}|^2,
\end{equation}
where, here and henceforth, the covariant derivative $\nabla$ and the norm $|\, \cdot \, |$ are taken with respect to the evolving metric $g$.

\begin{proposition} \label{propS} There exist positive constants $\alpha$ and $C$ such that for $t\in [0, T)$,

\begin{equation}
S \leq \frac{C}{|s|_h^{2\alpha}}.
\end{equation}

\end{proposition}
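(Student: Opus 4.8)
<br>

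The goal is to prove a weighted third-order estimate $S \le C/|s|_h^{2\alpha}$ for the Calabi-type quantity $S = |\nabla H \, H^{-1}|^2$, where $H^i_{\,\ell} = g_0^{i\bar j} g_{\ell\bar j}$ measures the evolving metric against the fixed background $g_0$. Let me think about how this would go.

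The standard approach (Yau, Phong–Sesum–Sturm) is to compute the evolution of $S$ under the Kähler–Ricci flow. One gets something like
$$\left(\frac{\partial}{\partial t} - \Delta\right) S \le -|\nabla \nabla H H^{-1}|^2 - |\bar\nabla \nabla H H^{-1}|^2 + (\text{curvature of } g_0) \cdot (\text{stuff}) \cdot S + \ldots$$
The problematic terms involve the curvature of the fixed metric $g_0$ contracted with the evolving metric $g$, and by the metric bounds (\ref{so1}), $g^{-1}$ can blow up like $|s|_h^{-2(1-\delta)}$ relative to $g_0^{-1}$. So the "bad" coefficient is a negative power of $|s|_h$.

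The plan:

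The plan is to run a maximum principle argument on a modified quantity on $X \setminus E$, using the section $s$ of $[E]$ as a cutoff-type weight. Concretely, I would consider something like
$$Q = |s|_h^{2\beta}\, S - A\varphi \quad \text{or} \quad Q = \log\left(|s|_h^{2\beta} S\right) + (\text{terms in } \varphi, \log\mathrm{tr}_{\omega_0}\omega),$$
for constants $\beta, A$ to be chosen large, following the template of Lemma \ref{lemmaest1}. Since $S$ is controlled away from $E$ (by Lemma \ref{lemmack}, $\omega(t)$ converges smoothly on compact subsets of $X\setminus E$, so $S$ is bounded there uniformly in $t$ near $T$ — actually one needs a local higher-order estimate too, but that's standard local parabolic theory), the quantity $Q$ tends to $-\infty$ near $E$ provided the weight $|s|_h^{2\beta}$ kills the possible blow-up of $S$. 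One should first establish a crude a priori bound $S \le C|s|_h^{-N}$ for some large $N$ (from (\ref{so1}) and interior Schauder-type estimates applied on shrinking balls), just to know $Q$ attains its max at an interior point of $X\setminus E \times (0,T)$, or at $t=0$.

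First I would record the evolution inequality for $S$: adapting the computation in \cite{PSS} (or \cite{Y1,Cao}) to the non-normalized flow with a background metric $g_0$ that is \emph{not} flat and not evolving, one gets
$$\left(\frac{\partial}{\partial t} - \Delta\right) S \le -\epsilon_1 |\nabla H H^{-1}|_{\text{2nd order}}^2 + C_1\, S \cdot \mathrm{tr}_{\omega}\omega_0 + C_1 (\mathrm{tr}_{\omega}\omega_0)\, S^{1/2} \cdot (\ldots),$$
where $C_1$ depends on the curvature and torsion (here zero, Kähler) of $g_0$, and the key point is that the bad term is $S\cdot \mathrm{tr}_\omega\omega_0$, and $\mathrm{tr}_\omega \omega_0 \le C|s|_h^{-2(1-\delta)}$ is NOT available directly — rather what Lemma \ref{lemmaest1}(ii) gives is $\mathrm{tr}_{\omega_0}\omega \le C|s|_h^{-2(1-\delta)}$, so I'd need the companion bound $\mathrm{tr}_\omega \omega_0 \le C|s|_h^{-2(1-\delta')}$, which follows by combining $\mathrm{tr}_{\omega_0}\omega$, the volume ratio bound $\omega^n \le C\Omega$ and the lower bound $\omega \ge c|s|_h^2\omega_0$ from (\ref{so1}), or directly: $\mathrm{tr}_\omega\omega_0 \le \frac{1}{(n-1)!}(\mathrm{tr}_{\omega_0}\omega)^{n-1}(\omega_0^n/\omega^n)$ and $\omega_0^n/\omega^n \le C|s|_h^{-2n(1-\delta)}$ by (\ref{so1}), giving a (worse but still polynomial) bound $\mathrm{tr}_\omega \omega_0 \le C|s|_h^{-M}$ for some $M>0$.

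Then I would compute $(\partial_t - \Delta)\log|s|_h^2 = \mathrm{tr}_\omega R(h)$, which by (\ref{Rh}) and Lemma \ref{lemmaest1}(i) is bounded: $|\mathrm{tr}_\omega R(h)| \le C |s|_h^{-2}\cdot|s|_h^2 = C$ near $E$ (since $R(h) \le C\omega_X/|s|_h^2$ type bound — more carefully, $-R(h)$ is roughly the Fubini–Study form on the exceptional $\mathbb P^{n-1}$ scaled by $|s|_h^{-2}$, and $\omega \le C|s|_h^{-2}\pi^*\omega_Y$ doesn't immediately dominate it, so I'd want to check $\mathrm{tr}_\omega(-R(h))$ is at worst a negative power of $|s|_h$). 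Putting $Q = \log S + \beta \log |s|_h^2 - A\varphi + (\text{if needed}) B\log\mathrm{tr}_{\omega_0}\omega$ and applying the maximum principle: at an interior max, $\Delta\log S \le \partial_t \log S$, and the bad term $C_1 \mathrm{tr}_\omega\omega_0$ gets absorbed by choosing the coefficient of $\Delta\varphi = n - \mathrm{tr}_\omega\hat\omega_t$ large, using $\mathrm{tr}_\omega\hat\omega_t \ge c(T-t)\,\mathrm{tr}_\omega\omega_0$... but $(T-t)\to 0$, so this is delicate — this is the template used in Lemma \ref{lemmaest1} where $A\hat\omega_{t_0} - \ldots \ge (C+1)\omega_0$ worked because $\hat\omega_t \ge \frac{T-t}{T}\omega_0$ AND there was the extra positive term from $\pi^*\omega_Y$. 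Here I expect the resolution is to use $\log\mathrm{tr}_{\omega_0}\omega$ or $\mathrm{tr}_{\pi^*\omega_Y}\omega$ as the dominant term rather than $\varphi$ alone, exactly as in the proof of Lemma \ref{lemmaest1}.

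The main obstacle I anticipate: controlling the bad terms in the evolution of $S$ — specifically the terms of the form $(\text{curvature of } g_0) * g^{-1} * g^{-1} * \partial g * \partial g$ and $g^{-1}*\partial g * \partial(\text{Christoffel of }g_0)$ which, because $g^{-1}$ degenerates like a negative power of $|s|_h$ near $E$, produce a coefficient $|s|_h^{-M}S$ (or $|s|_h^{-M}S^{1/2}\cdot(\text{good 2nd-order term})^{1/2}$) that must be beaten down by the weight $|s|_h^{2\beta}$ and the Monge–Ampère terms. The trick, as in Lemma \ref{lemmaest1}, will be to choose the weight exponent $\beta$ (hence $\alpha$) and the coefficients of the auxiliary terms $\varphi$, $\log\mathrm{tr}_{\omega_0}\omega$ large enough, exploiting that $\Delta\log\mathrm{tr}_{\omega_0}\omega \le C\mathrm{tr}_\omega\omega_0$ (from (\ref{tr1})) so that $-B\Delta\log\mathrm{tr}_{\omega_0}\omega$ can dominate $+C_1\mathrm{tr}_\omega\omega_0$, and the Cauchy–Schwarz splitting of the cross term lets the negative second-order term $-\epsilon_1(\ldots)$ absorb the half-powers. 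One has to be careful that the weight $|s|_h^{2\beta}$ does not itself generate uncontrollable terms: $\Delta\log|s|_h^2 = -\mathrm{tr}_\omega R(h)$ is bounded (or a controlled negative power), and there is no gradient term issue since we work with $\log$. Finally, once $Q \le C$ on $X\setminus E$ (using that $Q\to-\infty$ near $E$ and $Q$ bounded at $t=0$ by the smoothness of $g_0$), unwinding gives $S \le C|s|_h^{-2\beta}(\mathrm{tr}_{\omega_0}\omega)^{-B}\cdot e^{A\varphi} \le C|s|_h^{-2\alpha}$ for a suitable $\alpha$, since $\mathrm{tr}_{\omega_0}\omega \ge c|s|_h^{2}$ and $\varphi$ is bounded.
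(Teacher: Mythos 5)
Your overall shape (a maximum principle for a quantity weighted by powers of $|s|_h$, using (\ref{so1}) to convert curvature terms of $g_0$ measured in $g$ into negative powers of $|s|_h$) matches the paper's, but the mechanism you propose for absorbing the bad terms is not one that can work, and this is a genuine gap. After weighting, the dangerous term in the evolution of $|s|_h^{2\beta}S$ is still \emph{proportional to $S$ itself} with a degenerate coefficient (in the paper it survives as $C|s|_h^{2K}S$ in (\ref{S5})). None of your proposed auxiliary terms can dominate such a term: $-A\varphi$ contributes $-A\dot{\varphi}+A(n-\textrm{tr}_{\omega}\,\hat{\omega}_t)$, the weight $\beta\log|s|_h^2$ contributes $\beta\,\textrm{tr}_{\omega}\,R(h)$, and $\log\textrm{tr}_{\omega_0}\,\omega$ contributes at most $C\,\textrm{tr}_{\omega}\,\omega_0$ --- all trace-type quantities bounded independently of $S$, so they cannot beat a term growing linearly in $S$ with coefficient $|s|_h^{-K}$. (Your step ``$-B\Delta\log\textrm{tr}_{\omega_0}\,\omega$ dominates $C_1\textrm{tr}_{\omega}\,\omega_0$'' also has the signs backwards: (\ref{tr1}) gives only an \emph{upper} bound for $(\partial_t-\Delta)\log\textrm{tr}_{\omega_0}\,\omega$, and inside the logarithm the good second-derivative term is already cancelled against the gradient term via (\ref{S7}), so the log of the trace retains nothing that can absorb anything.) The paper's proof instead uses the Calabi--Yau coupling of the third-order quantity with the trace itself, in weighted form: $Q=|s|_h^{4K}S+A|s|_h^{K}\textrm{tr}_{\omega_0}\,\omega-Bt$. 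By Yau's second-order computation (\ref{S6}), the heat operator of $\textrm{tr}_{\omega_0}\,\omega$ contains the full second-derivative term $-g_0^{i\ov{j}}g^{k\ov{\ell}}g^{p\ov{q}}\nabla^0_k g_{i\ov{q}}\nabla^0_{\ov{\ell}}g_{p\ov{j}}$, which by (\ref{so1}) dominates $\tfrac{1}{C}|s|_h^{K}S$; after multiplying by the weight $|s|_h^{K}$ this produces the crucial good term $-\tfrac{1}{C}|s|_h^{2K}S$ in (\ref{S8}) that kills $C|s|_h^{2K}S$. This coupling is the heart of the argument and is missing from your proposal, whose template is Lemma \ref{lemmaest1}; but there the bad term was merely $C\,\textrm{tr}_{\omega}\,\omega_0$, not a multiple of $S$.

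The $\log S$ variant has a further problem: $(\partial_t-\Delta)\log S=S^{-1}(\partial_t-\Delta)S+|\nabla S|^2/S^2$, and by (\ref{S3}) the positive gradient term is bounded only by $2\left(|\overline{\nabla}(\nabla H\,H^{-1})|^2+|\nabla(\nabla H\,H^{-1})|^2\right)/S$, i.e.\ up to twice the available negative term, so it cannot simply be discarded. This is precisely why the paper works with $S$ linearly, weighted by $|s|_h^{4K}$, controlling the cross terms $\nabla|s|_h^{4K}\cdot\overline{\nabla}S$ through (\ref{S3})--(\ref{S4}) and Cauchy--Schwarz against the genuinely negative second-order terms. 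Finally, your preliminary ``crude bound $S\le C|s|_h^{-N}$'' to locate the maximum is unnecessary: for each $t<T$ the metric $g(t)$ is smooth on all of the compact manifold $X$, so the weighted quantity is smooth on $X$ and the maximum principle applies directly; the weights are needed only to make the resulting bound uniform as $t\to T^-$, not to push the maximum off $E$.
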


\begin{proof} From the computation in \cite{PSS} (see their equation (2.51)),  we have
\begin{equation} \label{S1}
\left( \ddt{}-\Delta \right) S \leq -|\overline{\nabla}(\nabla H \, H^{-1})|^2 - |\nabla(\nabla H \,  H^{-1})|^2 + C S + C| \nabla \textrm{Rm}(g_0)|^2,
\end{equation}
where $\textrm{Rm}(g_0)$ is the Riemannian curvature tensor of $g_0$.  Using (\ref{so1}) we can estimate:
\begin{equation} \label{S2}
| \nabla \textrm{Rm}(g_0) |^2 \le C |s|_h^{-K} (S+1),
\end{equation}
for a positive constant $K$.  Moreover, we have:
\begin{equation} \label{S3}
|\nabla S|\leq S^{1/2} ( |\overline{\nabla}(\nabla H \, H^{-1})| + |\nabla(\nabla H \, H^{-1})|),
\end{equation}
and, 
\begin{equation} \label{S4}
| \nabla |s|^{4K}_h | \le C|s|^{3K}_h, \quad |\Delta |s|^{4K}_h | \le C |s|^{3K}_h,
\end{equation}
where, here and henceforth, we are choosing $K$ sufficiently large.
 Combining (\ref{S1}), (\ref{S2}), (\ref{S3}) and (\ref{S4}), we obtain
\begin{eqnarray} \nonumber
\left( \ddt{}-\Delta \right) (|s|_h^{4 K } S) & = & |s|^{4K}_h \left( \ddt{} - \Delta \right) S - 2 \textrm{Re} ( \nabla |s|^{4K}_h \cdot \ov{\nabla} S) - (\Delta |s|_h^{4K})S  \\ \nonumber
&\leq& - |s|_h^{4K}( |\overline{\nabla}(\nabla H \, H^{-1})|^2 + |\nabla(\nabla H \, H^{-1})|^2) \\ \nonumber
&& \mbox{}  + C |s|^{3K}_h S^{1/2} ( |\overline{\nabla}(\nabla H \, H^{-1})| + |\nabla(\nabla H \, H^{-1})|) + C |s|^{2K}_h S +C  \\ \label{S5}
&\leq&  C (1+ |s|_h^{2K} S).
\end{eqnarray}

On the other hand, $\tr{\omega_0}{\omega}$ satisfies (cf. (\ref{tr1}))
\begin{eqnarray} \nonumber
\left(\ddt{} - \Delta \right) \tr{\omega_0}{\omega}  & = & - g^{k \ov{\ell}} R_{k \ov{\ell}}^{\ \ \, i \ov{j}}(g_0) g_{i \ov{j}} - g_0^{i \ov{j}} g^{k \ov{\ell}} g^{p\ov{q}} \nabla^0_k g_{i \ov{q}} \nabla^0_{\ov{\ell}} g_{p\ov{j}} \\
\label{S6}
& \le &  C|s|_h^{-K} - \frac{1}{C}|s|_h^K S  - \frac{1}{2}g_0^{i \ov{j}} g^{k \ov{\ell}} g^{p\ov{q}} \nabla^0_k g_{i \ov{q}} \nabla^0_{\ov{\ell}} g_{p\ov{j}} ,
\end{eqnarray}
where we have used again (\ref{so1}).   Here $\nabla^0$ denotes the covariant derivative with respect to $g_0$.  Recall the well-known estimate (see \cite{Y1} for example),
\begin{equation} \label{S7}
| \nabla \tr{\omega_0}{\omega}|^2 \le (\tr{\omega_0}{\omega}) g_0^{i \ov{j}} g^{k \ov{\ell}} g^{p\ov{q}} \nabla^0_k g_{i \ov{q}} \nabla^0_{\ov{\ell}} g_{p\ov{j}}.
\end{equation}
Compute
\begin{eqnarray} \nonumber
\lefteqn{\left(\ddt{} - \Delta\right) ( |s|_h^{K} \tr{\omega_0}{\omega} ) } \\ \nonumber
&\leq& -\frac{1}{C} |s|_h^{2K}S + C - 2 \textrm{Re}( \nabla |s|_h^{K} \cdot  \overline{\nabla} \tr{\omega_0}{\omega})- \frac{1}{2}|s|_h^K g_0^{i \ov{j}} g^{k \ov{\ell}} g^{p\ov{q}} \nabla^0_k g_{i \ov{q}} \nabla^0_{\ov{\ell}} g_{p\ov{j}}\\ \label{S8}
& \le & -\frac{1}{C} |s|_h^{2K}S + C,
\end{eqnarray}
where we have used the estimate
\begin{equation}
| 2 \textrm{Re}( \nabla |s|_h^{K} \cdot  \overline{\nabla} \tr{\omega_0}{\omega}) | \le C+\frac{1}{C} | \nabla |s|_h^K|^2 |\nabla \tr{\omega_0}{\omega} |^2\le  C + \frac{1}{2}|s|_h^K g_0^{i \ov{j}} g^{k \ov{\ell}} g^{p\ov{q}} \nabla^0_k g_{i \ov{q}} \nabla^0_{\ov{\ell}} g_{p\ov{j}},
\end{equation}
which follows from (\ref{S7}) and (\ref{so1}).

If we let $Q= |s|_h^{4K} S + A |s|_h^{K} \tr{\omega_0}{\omega} -Bt,$ for constants $A$ and $B$ then from (\ref{S5}) and (\ref{S8}) we obtain
\begin{equation}
\left( \ddt{} - \Delta \right) Q  <0,
\end{equation}
by choosing $A$ and then $B$ sufficiently large.
Applying the maximum principle gives a uniform upper bound for $Q$ and the proposition follows.
\qed
\end{proof}

We now show that the curvature of $g$ and all its covariant derivatives have similar bounds.

\begin{proposition} \label{propRm} For each integer $m \ge 0$ there exist $C_m, \alpha_m>0$ such that  for $t\in [0, T)$,
\begin{equation} \label{boundRm}
|\nablaR^m \emph{Rm}(g)| \leq \frac{C_m}{|s|_h^{2\alpha_m}},
\end{equation}
where we use $\nablaR= \frac{1}{2}(\nabla + \overline{\nabla})$ to denote the covariant derivative of $g$ as a Riemannian metric.
\end{proposition}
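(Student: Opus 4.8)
The plan is to argue by induction on $m$, exactly in the spirit of the Calabi--Yau higher-order estimates as adapted to the K\"ahler--Ricci flow (see \cite{C1,Cao,PSS}), but carrying through the degenerating weight $|s|_h^{-2\alpha_m}$. The base case $m=0$ is essentially Proposition \ref{propS}: from the two-sided bound (\ref{so1}) and the third-order estimate $S \le C|s|_h^{-2\alpha}$, the full curvature tensor $\textrm{Rm}(g)$ is controlled by $S$, the curvature of $g_0$, and the metric equivalence, so $|\textrm{Rm}(g)| \le C_0 |s|_h^{-2\alpha_0}$ for suitable $\alpha_0$. (One can also phrase the $m=0$ step directly via a parabolic inequality for $|\textrm{Rm}(g)|^2$ of Bando--Shi type, weighted by a large power of $|s|_h$.)

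For the inductive step, I would assume (\ref{boundRm}) for all orders up to $m-1$ and establish it for $m$. The key input is the standard evolution inequality for the norm-squared of the $m$-th covariant derivative of curvature under the Ricci/K\"ahler--Ricci flow,
\begin{equation}
\left( \ddt{} - \Delta \right) |\nablaR^m \textrm{Rm}(g)|^2 \le -2|\nablaR^{m+1}\textrm{Rm}(g)|^2 + C \sum_{j=0}^{m} |\nablaR^j \textrm{Rm}(g)|\,|\nablaR^{m-j}\textrm{Rm}(g)|\,|\nablaR^m \textrm{Rm}(g)|,
\end{equation}
(a Bando--Shi type estimate; see \cite{Cao}). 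By the inductive hypothesis the lower-order factors are bounded by negative powers of $|s|_h$, so the right-hand side is bounded by $C|s|_h^{-N}(|\nablaR^m\textrm{Rm}(g)|^2 + 1)$ for some large $N=N(m)$. One then runs the same weighted maximum-principle argument as in Proposition \ref{propS}: multiply $|\nablaR^m\textrm{Rm}(g)|^2$ by a sufficiently large power $|s|_h^{2K}$ (using the Chern--Lu / curvature-of-$h$ estimates $|\nabla|s|_h^{2K}| \le C|s|_h^{2K - \epsilon'}$ and $|\Delta|s|_h^{2K}|\le C|s|_h^{2K-\epsilon'}$, which follow from (\ref{Rh}), (\ref{so1}) and the explicit form of $h$, just as in (\ref{S4})), absorb the cross terms $\nabla|s|_h^{2K}\cdot\overline\nabla|\nablaR^m\textrm{Rm}(g)|^2$ using the good negative term $-|\nablaR^{m+1}\textrm{Rm}(g)|^2$ via Cauchy--Schwarz, and add a large multiple of the lower-order quantity $|s|_h^{2K'}|\nablaR^{m-1}\textrm{Rm}(g)|^2$ (which, by the inductive step's own estimate, satisfies a similar differential inequality with a favorable sign) together with a term linear in $t$. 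Choosing the constants in the right order makes the resulting test function a supersolution, and the maximum principle yields the uniform bound, hence (\ref{boundRm}) with $\alpha_m = K$ (say).

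The main obstacle — really the only delicate point — is bookkeeping the exponents: each differentiation both raises the order of curvature and forces a larger negative power of $|s|_h$ through (\ref{so1}) (since $g$ and $g_0$ are only equivalent up to the factor $|s|_h^{-2(1-\delta)}$), so one must verify that a single large power $|s|_h^{2K}$, with $K$ chosen depending on $m$ and growing with $m$, simultaneously dominates all the bad terms coming from the lower-order factors, the commutators between $\nabla$ and $\nablaR$, and the derivatives of the weight itself. This is routine but must be done carefully; once the exponent choice is pinned down, the maximum-principle argument is a direct copy of the proof of Proposition \ref{propS}. Since the weights $|s|_h^{2\alpha_m}$ vanish only along $E$, these estimates give uniform $C^\infty$ control of $g(t)$ on compact subsets of $X\setminus E$ up to time $T$, which is what is needed in Section \ref{sectionafterT}.
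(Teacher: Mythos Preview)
Your inductive step is essentially the paper's argument: weight $|\nablaR^m\textrm{Rm}|^2$ by a large power of $|s|_h$, add a large multiple of the weighted $(m-1)$st-order quantity whose evolution supplies the good $-|\nablaR^m\textrm{Rm}|^2$ term, and absorb cross terms by Cauchy--Schwarz. That part is fine.

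The gap is in your base case $m=0$. You assert that ``the full curvature tensor $\textrm{Rm}(g)$ is controlled by $S$, the curvature of $g_0$, and the metric equivalence.'' This is false as a pointwise inequality: $S = |\nabla H\,H^{-1}|^2$ measures only the \emph{first} derivatives of $g$ relative to $g_0$ (the Christoffel difference $\Gamma(g)-\Gamma(g_0)$), while $\textrm{Rm}(g)$ involves one more derivative, namely $\overline{\nabla}(\nabla H\,H^{-1})$. So the bound $S\le C|s|_h^{-2\alpha}$ from Proposition \ref{propS} does \emph{not} by itself give $|\textrm{Rm}(g)|\le C_0|s|_h^{-2\alpha_0}$. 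Your parenthetical alternative (``a parabolic inequality for $|\textrm{Rm}(g)|^2$ of Bando--Shi type, weighted by a large power of $|s|_h$'') does not close the argument either: the evolution of $|\textrm{Rm}|$ produces a bad $+C|\textrm{Rm}|^2$ term, and at order $m=0$ there is no ``$(m-1)$st'' curvature quantity whose evolution would donate a compensating $-|\textrm{Rm}|^2$.

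The paper resolves this by letting $S$ itself play the role of the ``order $-1$'' quantity. The crucial observation is the pointwise inequality
\[
|\textrm{Rm}(g)|^2 \;\le\; C\,|s|_h^{-K}\bigl(|\overline{\nabla}(\nabla H\,H^{-1})|^2 + |\nabla(\nabla H\,H^{-1})|^2\bigr),
\]
so the good negative term in the evolution (\ref{S1}) of $S$ absorbs the bad $|\textrm{Rm}|^2$ from the evolution of $|\textrm{Rm}|$. One then applies the maximum principle to $Q=|s|_h^{4K}|\textrm{Rm}|+A|s|_h^{2K}S$ (with an additional trick: at the maximum of $Q$ one substitutes for $\overline{\nabla}|\textrm{Rm}|$ using $\nabla Q=0$, rather than trying to bound it directly). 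Once the $m=0$ case is in hand, your inductive scheme coincides with the paper's.
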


\begin{proof}  From a computation of Hamilton's (Corollary 13.3 of \cite{H1}), we have
\begin{equation}  \label{evolveRm}
 \ddt{} |\nablaR^m \textrm{Rm}|^2 = \Delta |\nablaR^m \textrm{Rm}|^2 - 2|\nablaR^{m+1} \textrm{Rm}|^2 + \sum_{i+j=m} \nablaR^i \textrm{Rm} * \nablaR^j \textrm{Rm} * \nablaR^m \textrm{Rm},
 \end{equation}
 for $\textrm{Rm}=\textrm{Rm}(g)$ and
where $*$ is used to denote a linear combination of tensors formed by contraction with the metric $g$.  
We prove (\ref{boundRm}) by induction.  For the case $m=0$ we compute from (\ref{evolveRm}) that
\begin{equation} \label{1Rm1}
\left( \ddt{}- \Delta \right) |\textrm{Rm}| \leq  C |\textrm{Rm}|^2 + C.
\end{equation}
On the other hand we have from Proposition \ref{propS} and the inequality (\ref{S1}),
\begin{equation} \label{Ric2}
\left( \ddt{}- \Delta \right) S \leq  -|\overline{\nabla}(\nabla H \, H^{-1})|^2 - |\nabla(\nabla H \,  H^{-1})|^2 + C |s|^{-K}_h,
\end{equation}
where $K$ is a sufficiently large constant, which we take to be much larger than the constant $\alpha$ from Proposition \ref{propS}.
We compute the evolution of $Q=|s|_h^{4K} |\textrm{Rm}| + A|s|^{2K}_h S$ for a constant $A$  to be determined later.
\begin{eqnarray} \nonumber
 \left( \ddt{}- \Delta \right) Q
& = & |s|^{4K}_h \left( \ddt{}- \Delta \right) |\textrm{Rm}| + A |s|^{2K}_h \left( \ddt{}- \Delta \right) S \\ \nonumber
&&\mbox{} - 2\textrm{Re} ( \nabla |s|_h^{4K} \cdot \ov{\nabla} |\textrm{Rm}|) - 2A\, \textrm{Re} ( \nabla |s|^{2K}_h \cdot \ov{\nabla} S) \\ \label{Ric3}
&& \mbox{} - (\Delta |s|_h^{4K}) |\textrm{Rm}| - A(\Delta |s|_h^{2K}) S.
\end{eqnarray}
But at the maximum of $Q$ we have
\begin{equation} \label{Ric4}
\ov{\nabla} |\textrm{Rm}| = - |\textrm{Rm}| \, |s|^{-4K}_h \ov{\nabla} |s|^{4K}_h - A |s|^{-2K}_h \ov{\nabla} S  - A |s|^{-4K}_h S\,  \ov{\nabla} |s|_h^{2K}.
\end{equation}
Combining (\ref{1Rm1}), (\ref{Ric2}), (\ref{Ric3}), (\ref{Ric4}) we have, at a maximum point of $Q$,
\begin{eqnarray} \nonumber
\left( \ddt{}- \Delta \right) Q & \le & C |s|^{4K} |\textrm{Rm}|^2 -  A|s|_h^{2K} (|\overline{\nabla}(\nabla H \, H^{-1})|^2 + |\nabla(\nabla H \,  H^{-1})|^2)  \\
&& \mbox{}   + C |s|_h^{3K} |\textrm{Rm}| + CA |s|^{3K/2}_h | \nabla S| + C(1+A).
\end{eqnarray}
Using (\ref{S3}) and the estimate
\begin{equation}
|\textrm{Rm}|^2 \le C |s|_h^{-K} (|\overline{\nabla}(\nabla H \, H^{-1})|^2 + |\nabla(\nabla H \,  H^{-1})|^2),
\end{equation}
we see that choosing $A$ sufficiently large we obtain, at a maximum point of $Q$,
\begin{equation}
|\textrm{Rm}| \le \frac{C}{|s|_h^{2K}},
\end{equation}
and the case $m=0$ then follows from the maximum principle.

For general $m$ we assume inductively that for $\alpha_m$ and $C$ sufficiently large,
\begin{equation} \label{inductm}
\sum_{i=0}^m | \nablaR^i \textrm{Rm}|^2 \le \frac{C}{|s|_h^{2\alpha_m}}.
\end{equation}
We consider the quantity $Q_{m+1}= |s|_h^{2\alpha_{m+1}} | \nablaR^{m+1} \textrm{Rm}|^2 + A |s|_h^{2\alpha_m} |\nablaR^m \textrm{Rm}|^2$ for $\alpha_{m+1} >> \alpha_m$ and $A$ to be determined later.  Compute using (\ref{evolveRm}) and (\ref{inductm}),
\begin{eqnarray} \nonumber
\left( \ddt{} - \Delta \right) Q_{m+1} & \le & |s|_h^{2\alpha_{m+1}}(-2|\nablaR^{m+2} \textrm{Rm}|^2 + C |s|^{-2\alpha_m}_h |\nablaR^{m+1} \textrm{Rm}| ) \\ \nonumber
&& \mbox{} +  A |s|_h^{2\alpha_m} ( - 2 |\nablaR^{m+1} \textrm{Rm}|^2 + C|s|_h^{-3\alpha_m} ) \\ \nonumber
&& \mbox{} - 2 \textrm{Re} ( \nabla |s|_h^{2\alpha_{m+1}} \cdot \ov{\nabla} | \nablaR^{m+1} \textrm{Rm} |^2 ) - 2 A \textrm{Re} ( \nabla |s|_h^{2\alpha_m} \cdot \ov{\nabla} | \nablaR^m \textrm{Rm}|^2 ) \\
&& \mbox{} - (\Delta |s|_h^{2\alpha_{m+1}}) |\nablaR^{m+1} \textrm{Rm}|^2 - A (\Delta |s|_h^{2\alpha_m}) | \nablaR^m \textrm{Rm} |^2. \label{Qm}
\end{eqnarray}
But we have
\begin{equation}
\left| 2 \textrm{Re}(  \nabla |s|_h^{2\alpha_{m+1}} \cdot \ov{\nabla} | \nablaR^{m+1} \textrm{Rm} |^2) \right|   \le C \alpha_{m+1}^2 | \nablaR^{m+1} \textrm{Rm}|^2 |s|_h^{2\alpha_{m}} +  |s|_h^{2\alpha_{m+1}} | \nablaR^{m+2} \textrm{Rm}|^2, \quad  \label{Qm2}
\end{equation}
and
\begin{equation}
 \left| 2 A \textrm{Re}(  \nabla |s|_h^{2\alpha_m} \cdot \ov{\nabla} | \nablaR^m \textrm{Rm}|^2 ) \right|  \label{Qm3}
\le  A C \alpha_m^2 | \nablaR^m \textrm{Rm}|^2 |s|_h^{2\alpha_m -4} + \frac{A}{2}
 | \nablaR^{m+1} \textrm{Rm}|^2 |s|_h^{2\alpha_m},
\end{equation}
and, if $A$ is sufficiently larger than $\alpha_{m+1}$,
\begin{equation} \label{Qm4}
\left| (\Delta |s|_h^{2\alpha_{m+1}})   |\nablaR^{m+1} \textrm{Rm}|^2 \right|  \le \frac{A}{2} | \nablaR^{m+1} \textrm{Rm}|^2 |s|_h^{2\alpha_m}.
\end{equation}

Then from (\ref{inductm}), (\ref{Qm}), (\ref{Qm2}), (\ref{Qm3}), (\ref{Qm4}) choosing $\alpha_{m+1}$ and then $A$ sufficiently large, we see that at a maximum point of $Q_{m+1}$,
\begin{equation}
| \nablaR^{m+1} \textrm{Rm} |^2 \le \frac{C}{| s|_h^{5\alpha_m}}.
\end{equation}
since we may assume $2a_{m+1}>5\alpha_m$, the inductive step follows after applying the maximum principle to $Q_{m+1}$.
\qed
\end{proof}

Combining the above estimates with local elliptic estimates we have the following corollary.

\begin{corollary} \label{cormetricbd} For each integer $m \ge 0$ there exist $C_m, \alpha_m>0$ such that  for $t\in [0, T)$,
\begin{equation} \label{metricbound}
|(\nabla_{\mathbb{R}}^0)^m g(t)   |_{g_0} \leq \frac{C_m}{|s|_h^{2\alpha_m}},
\end{equation}
where $\nabla_{\mathbb{R}}^0$ denotes the real covariant derivative with respect to the fixed metric $g_0$.
\end{corollary}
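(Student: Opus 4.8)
The plan is to derive (\ref{metricbound}) from the estimates already established --- the metric equivalence (\ref{so1}), the third-order estimate of Proposition \ref{propS}, and the curvature estimates of Proposition \ref{propRm} --- by combining them with interior parabolic regularity applied to the K\"ahler--Ricci flow written as a scalar parabolic equation for the components $g_{i\ov{j}}$. The statement is local: on any fixed compact $K \subset X \setminus E$ it is immediate from Lemma \ref{lemmack}(ii), so it suffices to prove (\ref{metricbound}) at points $p$ near $E$, say with $\rho := |s|_h(p) \le 1/4$, working in the Euclidean coordinates $z^1, \dots, z^n$ on $D$ (identified via $\pi$ with a neighbourhood of $p$). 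On the Euclidean ball $B_{\rho/2}(p)$ one has $|s|_h = r \in [\rho/2, 3\rho/2]$, so on this ball (\ref{so1}) and the equivalence of $g_0$ with $\geuc$ give $c\,\rho^2\,\geuc \le g(t) \le C\,\rho^{-2(1-\delta)}\,\geuc$; in particular $g^{-1}$ and the ellipticity ratio of the linearization are bounded by a fixed negative power of $\rho$.

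First I would rescale. Set $w = 2(z-p)/\rho$, carrying $B_{\rho/2}(p)$ to the unit ball, and let $\hat g(w,t)$ be the pullback of $g(\cdot, t)$, so that $\hat g_{i\ov{j}}(w,t) = (\rho/2)^2 g_{i\ov{j}}(p + (\rho/2)w, t)$. Since the Ricci tensor is natural under this dilation, $\hat g$ again solves the K\"ahler--Ricci flow on $B_1(0)\times[0,T)$, and the metric ratios (hence the ellipticity) are unchanged, so they remain powers of $\rho$. Next I would use the K\"ahler identity $\partial_i\partial_{\ov{j}} g_{k\ov{\ell}} = \partial_k\partial_{\ov{\ell}} g_{i\ov{j}}$ to rewrite the flow $\ddt{g_{i\ov{j}}} = \partial_i\partial_{\ov{j}}\log\det g$ as
\[
\left( \ddt{} - \Delta_g \right) g_{i\ov{j}} = - g^{k\ov{q}} g^{p\ov{\ell}} \, \partial_i g_{p\ov{q}} \, \partial_{\ov{j}} g_{k\ov{\ell}}, \qquad \Delta_g := g^{k\ov{\ell}} \partial_k \partial_{\ov{\ell}},
\]
a uniformly parabolic scalar equation for each component $g_{i\ov{j}}$ whose leading coefficients are $g^{k\ov{\ell}}$. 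Proposition \ref{propS}, together with (\ref{so1}) to absorb the metric factors in the norm, bounds the first derivatives of $g$ --- and hence $g^{-1}$ and the right-hand side above --- by powers of $|s|_h$; after rescaling this says that on $B_1(0)$ the coefficients and inhomogeneous term of the equation for $\hat g$ lie in $C^{0,1}$ with norm bounded by a power of $\rho$.

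The third step is a standard bootstrap on unit-scale parabolic cylinders. Interior parabolic $L^p$ estimates give $\hat g$ bounded in $W^{2,p}$ in space (and $W^{1,p}$ in time), hence in $C^{1,\alpha}$, on $B_{1/2}(0) \times \{t\}$ with norm a power of $\rho$; here for $t$ bounded away from $0$ one uses the flow on $B_1(0)\times[t-\eta_0,t]$ with $\eta_0$ fixed, and for small $t$ one uses the estimates up to the initial slice $t=0$, on which $\hat g_0$ is equivalent to $\geuc$ with all rescaled derivatives uniformly bounded. Now the coefficients $g^{k\ov{\ell}}$ and the right-hand side lie in $C^{\alpha}$, so interior parabolic Schauder estimates give $\hat g \in C^{2,\alpha}$; iterating, $\hat g \in C^{m,\alpha}$ for every $m$, with norm at $w=0$ bounded by $C_m\rho^{-N_m}$. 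Undoing the rescaling turns this into $|\partial_z^m g(p)| \le C_m |s|_h(p)^{-(N_m+m+2)}$, and writing the ordinary coordinate derivatives in terms of $\nablaR^0$ and the bounded Christoffel symbols of $g_0$ yields (\ref{metricbound}) with a suitable $\alpha_m$.

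The main obstacle is keeping track of the degenerate ellipticity: the constants in the parabolic $L^p$ and Schauder estimates depend on the ellipticity ratio, which here degenerates like a fixed negative power of $|s|_h$ near $E$, so one must use that this dependence is at worst polynomial (equivalently, absorb the ratio into the rescaling) and then verify that the powers of $|s|_h$ accumulated over the finitely many bootstrap steps needed for a given $m$ remain finite --- which they do, producing the claimed $\alpha_m$. As a consistency check, Proposition \ref{propRm}, after the same rescaling, shows directly that the $\hat g$ have curvature and covariant derivatives of curvature bounded (up to powers of $\rho$) uniformly, an alternative input to the local regularity theory.
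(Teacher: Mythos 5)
Your proposal is correct in outline, but it takes a genuinely different route from the paper. The paper argues elliptically at each fixed time: it applies the \emph{fixed} Euclidean operator $L=\sum_k\partial_k\partial_{\ov{k}}$ to the weighted components $|s|_h^{2\alpha}g_{i\ov{j}}$, so that by the K\"ahler identity the leading term is $-|s|_h^{2\alpha}\sum_k R_{k\ov{k}i\ov{j}}$ plus first-order terms (equation (\ref{eqnLE})); the curvature bounds of Proposition \ref{propRm} together with Proposition \ref{propS} and (\ref{so1}) make the right-hand side uniformly bounded once $\alpha$ is large, and the $L^p$--Sobolev--Schauder bootstrap is then run for the standard Laplacian, whose constants are absolute. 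What that buys is that the degeneracy near $E$ is absorbed entirely into the weight $|s|_h^{2\alpha}$ and one never has to quantify how elliptic or parabolic constants depend on a degenerating ellipticity ratio. Your parabolic rescaling argument instead uses only (\ref{so1}) and Proposition \ref{propS} as a priori input (with Proposition \ref{propRm} relegated to a consistency check), which is an economy, but it shifts the burden onto two points you should make explicit: (1) the interior parabolic $W^{2,1}_p$ and Schauder constants must be shown to depend at most polynomially on the ellipticity ratio and on the coefficient norms, both of which degenerate like powers of $\rho=|s|_h(p)$ after your rescaling --- this is true but not a textbook statement, and it is exactly what the paper's weighted-Laplacian trick avoids; and (2) for the first $L^p$ step you need some regularity of the coefficients $g^{k\ov{\ell}}$ \emph{in time}, which does not follow from Proposition \ref{propS} alone; either invoke Krylov-type $L^p$ theory for coefficients measurable in $t$ and VMO in $x$, or simply borrow the $m=0$ case of Proposition \ref{propRm} (available at this point in the paper) to bound $\partial_t g=-\textrm{Ric}$ by a power of $|s|_h^{-1}$. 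Two smaller slips do not affect the argument but should be fixed: the inhomogeneous term is a priori only $L^\infty$, not $C^{0,1}$ (your bootstrap order, $L^p$ first and Schauder second, already accommodates this), and in the downstairs coordinates the Christoffel symbols of $g_0$ are not bounded near $E$ (by Lemma \ref{lemmacomp} they blow up like a power of $|s|_h^{-1}$), which is harmless since such factors are absorbed into $\alpha_m$.
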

\begin{proof}
We work in a fixed coordinate neighborhood of $y_0$ in $Y$.  
  We denote by $L$ the elliptic operator $L = \sum_k \partial_k \partial_{\ov{k}}$.  Let $\alpha>0$ be a sufficiently large constant to be determined later and compute for fixed $i,j$,
\begin{equation} \label{eqnLE}
L(|s|_h^{2\alpha} g_{i \ov{j}}) = - |s|_h^{2\alpha} \sum_k R_{k \ov{k} i \ov{j}} + E,
\end{equation}
where $E$ is an expression involving the metric $g$ only up to first derivatives, and large powers of $|s|_h$.  From Propositions  \ref{propS} and \ref{propRm}, the right hand side of (\ref{eqnLE}) is uniformly bounded if $\alpha$ is chosen to be sufficiently large.  Applying standard elliptic estimates for Sobolev spaces, we obtain bounds on $|s|_h^{2\alpha} g_{i \ov{j}}$ in $L^p_2$ for any $p$.  By the Sobolev embedding theorem this gives us a uniform estimate on $|s|_h^{2\alpha} g_{i \ov{j}}$ in the $C^{1+\gamma}$ norm for some $\gamma \in (0,1)$.  Here we are taking norms with respect to the Euclidean metric in our coordinate patch.  Replace $\alpha$ in (\ref{eqnLE}) with $\tilde{\alpha} >> \alpha$ and we see that the right hand side of  is bounded in $C^{\gamma}$.  Applying Schauder estimates we obtain bounds on $|s|_h^{2\tilde{\alpha}} g_{i \ov{j}}$ in $C^{2+\gamma}$.  Applying a bootstrap argument completes the proof of the corollary.  \qed
\end{proof}


\section{Continuing the K\"ahler-Ricci flow} \label{sectionafterT}

In this section, we will show how to continue the K\"ahler-Ricci flow past time $T$ on the manifold $Y$, thus completing the proof of Theorem \ref{thmblowup}.
  We use some of the techniques developed in \cite{SoT3}. 
  
  Under the assumptions of Theorem \ref{thmblowup}, we have seen that  $(X, g(t))$ converges to $(Y, d_T)$ in the sense of Gromov-Hausdorff as $t\rightarrow T^-$.  We explain how one can continue the K\"ahler-Ricci flow through the singularity, in the manner of \cite{SoT3} (see also the recent paper \cite{SzT}).  We replace $X$ with the manifold $Y$ at the singular time $T$.
Again, we assume for simplicity that we have only one exceptional divisor $E$.

Write $\hat{\omega}_T =  \pi^* \omega_Y$, where $\omega_Y$ is the smooth K\"ahler metric on $Y$.  Then
from Lemma \ref{Linfinity}, there is a closed positive (1,1) current $\omega_T$ and a bounded function $\varphi_T$  with
\begin{equation}
\omega_T  = \hat{\omega}_T + \frac{\sqrt{-1}}{2\pi} \partial \ov{\partial} \varphi_T \ge 0.
\end{equation}
Moreover from Lemma \ref{lemmack},  $\varphi(t)$ converges  to $\varphi_T$ pointwise on $X$ and smoothly on compact subsets of $X \setminus E$.   We have the following lemma.

\begin{lemma} We have
$$\varphi_T|_E = \emph{constant}.$$
Hence there exists a bounded function $\psi_T$ on $Y$, which is smooth on $Y \setminus \{ y_0 \}$,  with $\varphi_T = \pi^* \psi_T$.
\end{lemma}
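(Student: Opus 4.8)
The goal is to show that the limit potential $\varphi_T$ is constant along the exceptional divisor $E$, so that it descends via $\pi$ to a function $\psi_T$ on $Y$. The natural approach is to restrict the flow equation to $E$ and exploit the fact that $E$ is contracted to a point, so its volume with respect to $\omega(t)$ goes to zero. First I would recall that $E\cong\mathbb P^{n-1}$ and that $[\hat\omega_T]|_E = [\pi^*\omega_Y]|_E = 0$ in $H^{1,1}(E,\mathbb R)$, since $\pi$ maps $E$ to the single point $y_0$. Consequently, if $i\colon E\hookrightarrow X$ denotes the inclusion, the restricted reference form $i^*\hat\omega_T$ is cohomologous to zero on $E$, and one may write $i^*\hat\omega_T = \frac{\sqrt{-1}}{2\pi}\partial\ov\partial f$ for some smooth function $f$ on $E$ (indeed, with the explicit local model, $i^*\hat\omega_T$ extends the zero form from $D_{1/2}$, so one can take $f$ essentially constant near the relevant chart, but in general one just uses the $\partial\ov\partial$-lemma on the compact K\"ahler manifold $E$).

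**Key steps.** Restricting the identity $\omega_T = \hat\omega_T + \frac{\sqrt{-1}}{2\pi}\partial\ov\partial\varphi_T$ to $E$ gives, in the sense of currents on $E$,
\begin{equation}
i^*\omega_T = \frac{\sqrt{-1}}{2\pi}\partial\ov\partial\big( f + i^*\varphi_T \big) \ge 0.
\end{equation}
Here $i^*\varphi_T$ makes sense as a bounded function on $E$: by Lemma \ref{Linfinity} and Lemma \ref{lemmack}, $\varphi(t)$ is uniformly bounded and converges pointwise on all of $X$ to $\varphi_T$, so its restriction to $E$ is a well-defined bounded function, and in fact an $L^1$ (indeed bounded) limit of the smooth restrictions $i^*\varphi(t)$. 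Thus $f + i^*\varphi_T$ is a bounded $\hat{0}$-plurisubharmonic function on the compact K\"ahler manifold $E$ — that is, $\frac{\sqrt{-1}}{2\pi}\partial\ov\partial$ of it is a positive current but its cohomology class is zero. By the maximum principle for (quasi-)plurisubharmonic functions on a compact complex manifold — equivalently, since a function whose $\partial\ov\partial$ is $\ge 0$ as a current and whose class is zero must be constant — we conclude $f + i^*\varphi_T$ is constant on $E$. If $f$ itself is constant (which holds with the explicit local model, since $\pi^*\omega_Y$ pulls back from $Y$ and vanishes identically on $E$ as a $(1,1)$-form: $i^*\pi^*\omega_Y = (\pi\circ i)^*\omega_Y = (\text{const map})^*\omega_Y = 0$), then $\varphi_T|_E$ is constant directly. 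Otherwise subtract the constant $f$ first.

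**Finishing.** Once $\varphi_T|_E = c$ is established, the descent is routine: since $\varphi(t) = \pi^*(\text{something})$ modulo the reference forms — more precisely since $\omega_T - \hat\omega_T = \frac{\sqrt{-1}}{2\pi}\partial\ov\partial\varphi_T$ with $\hat\omega_T = \pi^*\omega_Y$ — and $\pi$ is a biholomorphism off $E$ contracting $E$ to $y_0$, the bounded function $\varphi_T$ on $X$ that is constant on the fibre $E = \pi^{-1}(y_0)$ descends set-theoretically to a bounded function $\psi_T$ on $Y$ with $\varphi_T = \pi^*\psi_T$; it is smooth on $Y\setminus\{y_0\}$ because $\varphi(t)\to\varphi_T$ smoothly on compact subsets of $X\setminus E \cong Y\setminus\{y_0\}$ by Lemma \ref{lemmack}.

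**Main obstacle.** The only real subtlety is justifying that $i^*\varphi_T$ is a legitimate bounded $\omega$-psh-type function on $E$ to which the compact maximum principle applies — i.e. controlling the restriction of the limit potential to the divisor, given that a priori $\varphi_T$ is only known to be a bounded pointwise limit and the convergence degenerates precisely along $E$. One resolves this by noting that $i^*\varphi(t)$ is a sequence of smooth functions on $E$ with a uniform $L^\infty$ bound and with $\frac{\sqrt{-1}}{2\pi}\partial\ov\partial(f + i^*\varphi(t)) = i^*\omega(t) \ge 0$; extracting a weak limit of these currents and identifying the potential with $i^*\varphi_T$ via the pointwise convergence gives the desired $\hat 0$-psh function, after which the rigidity statement (a bounded function on compact $E$ with $\partial\ov\partial$ a positive current in the trivial class is constant) closes the argument.
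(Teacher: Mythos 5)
Your argument is essentially the paper's proof: since $\hat{\omega}_T|_E = (\pi\circ i)^*\omega_Y = 0$, the restriction $\varphi_T|_E$ is a bounded potential of a positive $(1,1)$-current in the trivial class on the compact manifold $E$, hence constant, and the descent to $\psi_T$ follows from the structure of the blow-down map; your additional care in realizing $\varphi_T|_E$ as a limit of the smooth quasi-psh restrictions $\varphi(t)|_E$ is just a more detailed treatment of a point the paper takes as immediate. One small slip: for $t<T$ the correct identity is $\frac{\sqrt{-1}}{2\pi}\partial\ov{\partial}\bigl(\varphi(t)|_E\bigr) = i^*\omega(t) - \tfrac{T-t}{T}\, i^*\omega_0$ (since $i^*\hat{\omega}_t = \tfrac{T-t}{T}\, i^*\omega_0$, not $0$), but the error term tends to zero as $t \rightarrow T^-$, so your limiting argument is unaffected.
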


\begin{proof}
Since $\hat{\omega}_T|_E =0$, we have
\begin{equation}
\frac{\sqrt{-1}}{2\pi} \left( \partial \ov{\partial} \varphi_T\right)|_E = \omega_T|_E \ge 0,
\end{equation}
and thus $\varphi_T$ must be constant on $E$.
  The existence of $\psi_T$ follows immediately from the properties of the blow-down map $\pi$. \qed
\end{proof}

We now define a closed positive (1,1) current $\omega'$ on $Y$ by
\begin{equation}\label{omegaprime}
\omega' =  \omega_Y +  \frac{\sqrt{-1}}{2\pi} \partial \ov{\partial} \psi_T \ge 0.
\end{equation}
Thus $\omega'$ is the push-down of the current $\omega_T$ to $Y$ and is smooth on  $Y \setminus \{ y_0 \}$.

\begin{lemma}  \label{lemmaLp}
There exists  $p>1$ such that
$\displaystyle{\frac{{\omega'}^n}{ \omega^n_Y}  \in L^p(Y)}.$
It follows that $\psi_T$ is continuous on $Y$.
\end{lemma}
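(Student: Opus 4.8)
The plan is to show that the volume ratio $\omega'^n/\omega_Y^n$ lies in $L^p(Y)$ for some $p>1$ by reducing to a local statement near $y_0$ and combining the upper bound $\omega^n \le C\Omega$ from Lemma \ref{Linfinity}.(iii) with an understanding of how the reference volume form $\Omega$ and $\omega_Y^n$ compare near the exceptional divisor. First I would recall that on $X$ we have $\omega(t)^n \le C\Omega$, and passing to the limit $t\to T^-$ (using Lemma \ref{Linfinity}.(iv) and Lemma \ref{lemmack}, which give pointwise/weak convergence of $\omega(t)$ to $\omega_T$ on $X$ and smooth convergence on $X\setminus E$) one gets $\omega_T^n \le C\Omega$ as measures, and in particular the smooth function $\omega_T^n/\Omega$ on $X\setminus E$ is bounded. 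Pushing down via $\pi$, on $Y\setminus\{y_0\}$ we have $\omega'^n = \pi_*(\omega_T^n)$ and $\omega'^n/\omega_Y^n \le C\,\Omega/(\pi^*\omega_Y^n)$ wherever this makes sense. So the whole question becomes: what is the blow-up rate of $\Omega/(\pi^*\omega_Y)^n$ near $y_0$, i.e. near $E$ upstairs?

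The key point is that $\Omega$ is a smooth, strictly positive volume form on the manifold $X$ (it is defined by $\frac{\sqrt{-1}}{2\pi}\partial\bar\partial\log\Omega = \frac{1}{T}(\pi^*\omega_Y - \omega_0)$ with $\int_X\Omega=1$, hence $\Omega$ is comparable to $\omega_0^n$ on all of $X$), whereas $(\pi^*\omega_Y)^n$ degenerates along $E$. Working in the coordinate chart $\tilde D_i \subset \tilde D$ with coordinates $\tilde z(i)^1,\dots,\tilde z(i)^n$, where $E = \{\tilde z(i)^i = 0\}$ and $\pi(\tilde z) = z$ with $z^i = \tilde z(i)^i$ and $z^j = \tilde z(i)^i \tilde z(i)^j$ for $j\ne i$, the Jacobian of $\pi$ vanishes to order $n-1$ in $\tilde z(i)^i$, so $(\pi^*\omega_Y)^n \sim |\tilde z(i)^i|^{2(n-1)}\, (\text{smooth positive})$ in terms of the Euclidean volume form of the $\tilde z(i)$ coordinates, while $\omega_0^n$ (and hence $\Omega$) is comparable to that Euclidean volume form. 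Equivalently, in the downstairs coordinates $z$ on $D\subset Y$, $\omega_Y^n$ is comparable to the Euclidean volume form $\omega_{\textrm{Eucl}}^n$, and the push-forward $\pi_*\Omega$ behaves like $(\sum|z^i|^2)^{-(n-1)}\,\omega_{\textrm{Eucl}}^n = |s|_h^{-2(n-1)}\,\omega_{\textrm{Eucl}}^n$ near $0$. Therefore
\begin{equation}
\frac{\omega'^n}{\omega_Y^n} \le \frac{C}{|s|_h^{2(n-1)}} = \frac{C}{r^{2(n-1)}}
\end{equation}
near $y_0$, with $r^2 = |z^1|^2+\cdots+|z^n|^2$. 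Since $\int_{D_{1/2}} r^{-2p(n-1)}\,\omega_{\textrm{Eucl}}^n < \infty$ precisely when $2p(n-1) < 2n$, i.e. when $p < n/(n-1)$, choosing any $p$ with $1 < p < n/(n-1)$ (which is possible as $n\ge 2$) gives $\omega'^n/\omega_Y^n \in L^p(Y)$. Away from $y_0$ the ratio is a bounded smooth function, so there is no issue there.

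For the final assertion that $\psi_T$ is continuous on $Y$: $\psi_T$ solves, in the weak (pluripotential) sense, the complex Monge–Ampère equation $(\omega_Y + \frac{\sqrt{-1}}{2\pi}\partial\bar\partial\psi_T)^n = F\,\omega_Y^n$ with $0 \le F = \omega'^n/\omega_Y^n \in L^p(Y)$, $p>1$, and $\psi_T$ bounded; by the $L^p$ version of Kołodziej's theorem on the complex Monge–Ampère equation, the unique bounded $\omega_Y$-plurisubharmonic solution is continuous on $Y$, so $\psi_T$ is continuous. I expect the main obstacle to be the careful bookkeeping of the blow-up rate — verifying cleanly that $\pi_*\Omega$ (equivalently $\Omega$ read in the blow-up coordinates) really does blow up like $r^{-2(n-1)}$ and no faster, and that this is integrable to a power $p>1$; once that local computation is pinned down, the $L^p$ membership and the appeal to Kołodziej's regularity theorem are routine.
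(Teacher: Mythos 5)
Your proposal follows essentially the same route as the paper: both rest on the volume bound $\omega_T^n\le C\Omega$ from Lemma \ref{Linfinity}.(iii) together with the fact that $(\pi^*\omega_Y)^n$ vanishes to order $2(n-1)$ along $E$, and both finish with Kolodziej's theorem for continuity. The only cosmetic difference is that you push everything down to $Y$ and compute the explicit blow-up rate $r^{-2(n-1)}$ of the density near $y_0$, obtaining the explicit range $1<p<n/(n-1)$, whereas the paper keeps the integral upstairs on $X\setminus E$, writing
\begin{equation*}
\int_{X\setminus E}\Bigl(\frac{\omega_T^n}{(\pi^*\omega_Y)^n}\Bigr)^p(\pi^*\omega_Y)^n\le C\int_{X\setminus E}\Bigl(\frac{\Omega}{(\pi^*\omega_Y)^n}\Bigr)^{p-1}\Omega,
\end{equation*}
which is finite for $p-1$ small for exactly the reason you spell out; after the change of variables the two computations are identical.

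There is, however, one ingredient you leave implicit that the paper makes explicit and that your argument does need. The measure $\omega'^n=(\omega_Y+\frac{\sqrt{-1}}{2\pi}\partial\bar\partial\psi_T)^n$ is defined on all of $Y$ in the Bedford--Taylor sense, since $\psi_T$ is only a bounded $\omega_Y$-psh function; a priori it could carry an atom at $y_0$, in which case neither the statement $\omega'^n/\omega_Y^n\in L^p(Y)$ nor the global equation $(\omega_Y+\frac{\sqrt{-1}}{2\pi}\partial\bar\partial\psi_T)^n=F\,\omega_Y^n$ that you feed into Kolodziej's continuity theorem would follow from your density estimate on $Y\setminus\{y_0\}$ alone. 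You need to rule this out by the standard fact that the Monge--Amp\`ere measure of a bounded plurisubharmonic function puts no mass on pluripolar sets (here the single point $\{y_0\}$); the paper cites Bedford--Taylor for precisely this step before doing the $L^p$ estimate. With that one sentence added, your argument is complete.
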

\begin{proof} Since $\psi_T$ is bounded and smooth away from $y_0$,  $\omega'^n/\omega_Y^n$ has no mass at $y_0$ and hence is in $L^1(Y)$ .  Here we are using the fact that if $u$ is a bounded plurisubharmonic function then $(\ddbar u)^n$ takes no mass on pluripolar sets (see for example \cite{BT, B, Kol3}).   Applying part (iii) of Lemma \ref{Linfinity},
\begin{equation}
\int_{Y \setminus \{ y_0\}}  \left( \frac{{\omega'}^n}{ \omega^n_Y} \right)^p \omega_Y^n = \int_{X \setminus E} \left( \frac{\omega_T^n}{ (\pi^*\omega_Y)^n} \right)^p (\pi^*\omega_Y)^n \le C \int_{X \setminus E} \left( \frac{\Omega}{ (\pi^*\omega_Y)^n} \right)^{p-1} \Omega \le C,
\end{equation}
as long as $p-1>0$ is sufficiently small.   Hence $\omega'^n/\omega^n_Y$ belongs to $L^p(Y)$.

The fact that $\psi_T$ is continuous follows immediately from a theorem of Kolodziej (see Section 2.4 of \cite{Kol1}).
\qed
\end{proof}

Following \cite{SoT3}, we construct a solution of the K\"ahler-Ricci flow on $Y$ starting at the (possibly singular) metric $\omega'$.   Fix a smooth closed (1,1) form $\chi \in c_1(K_Y)$.  Then there exists $T'>T$ such that  for $t$ in $[T,T']$ the closed $(1,1)$ form
\begin{equation} \label{hatomegatY}
 \hat{\omega}_{t,Y} :=  \omega_Y + (t-T) \chi
\end{equation}
is K\"ahler.  Note that $T'$ is strictly less than the maximal time $T_Y$  in part (iv) of the definition of canonical surgical contraction.
We will use the metrics $\hat{\omega}_{t, Y}$ as our reference metrics for the K\"ahler-Ricci flow as it continues on $Y$.   For simplicity of notation, we will drop the subscript $Y$ and write $\hat{\omega}_{t, Y}$ as $\hat{\omega}_t$.  Fix a smooth volume form $\Omega_Y$ on $Y$ satisfying, for $t \in [T, T']$,
\begin{equation} \label{OmegaYdefn}
\ddbar \log \Omega_Y = \ddt{} \hat{\omega}_t = \chi \in c_1(K_Y).
\end{equation}
We will now construct a family of functions $\psi_{T, \ve}$ on $Y$ which converge to $\psi_T$, using the method of \cite{SoT3}.   For $\ve>0$ sufficiently small, and $K$ fixed and sufficiently large, define a family of volume forms $\Omega_{\ve}$ on $Y$ by
\begin{equation} \label{Omegaepsilon}
\Omega_{\ve} = (\pi|_{X\setminus E}^{-1})^* \left( \frac{ |s|_h^{2K} \omega^n(T-\ve)} {\ve + |s|_h^{2K}} \right)+ \ve \Omega_Y \quad \textrm{on } Y \setminus \{ y_0 \},
\end{equation}
and  $\Omega_{\ve}|_{y_0} = \ve \Omega_Y|_{y_0}$.  It follows from the definition of the map $\pi$ that, after choosing $K$ sufficiently large, the volume form  $\Omega_{\ve}$ lies in $C^{\ell}$ for a fixed large constant $\ell$.  Moreover, $\Omega_{\ve}$ converges to $\omega'^n$ in $C^{\infty}$ on compact subsets of $Y \setminus \{ y_0 \}$ as $\ve$ tends to zero.  We define our functions $\psi_{T, \ve}$ to be solutions of the complex Monge-Amp\`ere equations:
\begin{equation} \label{mapsi}
( \omega_Y + \ddbar \psi_{T, \ve})^n = C_{\ve} \Omega_{\ve}, ~~~~~\sup_Y (\psi_{T, \ve} - \psi_T) = \sup_Y (\psi_T - \psi_{T,\ve}),
\end{equation}
where the constants $C_{\ve}\in \mathbb{R}$ are chosen so that $C_{\ve} \int_Y \Omega_{\ve} = \int_Y  \omega_Y^n.$  Solutions $\psi_{T, \ve}$ to (\ref{mapsi}) exist by Yau's theorem \cite{Y1}, are unique by the well-known result of Calabi, and lie in $C^k(Y) \cap C^{\infty}(Y \setminus \{y_0 \})$ due to the regularity of $\Omega_{\ve}$.  Note that we are free to raise $k$ by increasing $\ell$ and $K$.

By the definition of $\Omega_{\ve}$ we have
\begin{equation} \label{kol}
 \|  \frac{C_\ve \Omega_\ve}{\Omega_Y} - \frac{\omega'^n}{\Omega_Y}  \|_{L^1(Y)} \rightarrow 0, \quad \textrm{as } \ve \rightarrow 0.
 \end{equation}
From Lemma \ref{lemmaLp}  and  Kolodziej's stability theorem  \cite{Kol2}, we have
\begin{equation} \label{psiconvergence}
 \| \psi_{T,\ve} - \psi_T\|_{L^{\infty}(Y)} \rightarrow 0, \quad \textrm{as } \ve \rightarrow 0.
\end{equation}
Now let $\varphi_{\ve}$ be solutions of the parabolic complex Monge-Amp\`ere equations
 \begin{equation}
 \ddt{\varphi_{\ve}}= \log \frac{(\hat{\omega}_t + \ddbar\varphi_{\ve})^n}{\Omega_Y}, \qquad \varphi_{\ve} |_{t=T} = \psi_{T, \epsilon},
 \end{equation}
 for $t \in [T, T']$.
 Then we have the following.

\begin{proposition} \label{propst}
There exists a function $\varphi$ in $C^0([T,T'] \times Y) \cap C^{\infty}((T,T'] \times Y)$ such that
\begin{enumerate}
\item[(i)]  $\varphi_{\ve} \rightarrow \varphi$ in $L^{\infty}([T,T'] \times Y)$.
\item[(ii)]  The convergence $\varphi_{\ve} \rightarrow \varphi$ is $C^{\infty}$ on compact subsets of $(T, T']\times Y$.
\item[(iii)]  $\varphi$ is the unique solution of
\begin{equation} \label{pmast}
\varphi|_{t=T} = \psi_T, \quad \ddt{\varphi}= \log \frac{(\hat{\omega}_t + \ddbar\varphi)^n}{\Omega_Y} \quad \textrm{for } t \in (T, T'],
 \end{equation}
in the space $C^0([T,T'] \times Y) \cap C^{\infty}((T,T'] \times Y)$.
\end{enumerate}
\end{proposition}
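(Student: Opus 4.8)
The plan is to realize $\varphi$ as the $\ve\to 0$ limit of the approximating solutions $\varphi_\ve$, controlled by two families of a priori estimates that are uniform in $\ve$: an $L^\infty$ bound on all of $[T,T']\times Y$, which yields (i), the continuity up to $t=T$, and the initial condition; and interior higher order bounds on $[t_1,T']\times Y$ for each $t_1\in(T,T']$, which yield (ii) and the smoothness for $t>T$. Throughout we are free to shrink $T'$ (keeping $\hat\omega_t$ K\"ahler), since afterwards the solution extends by the standard short-time theory.

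For the $L^\infty$ bound, note that $\hat\omega_t$ is uniformly equivalent to $\omega_Y$ for $t\in[T,T']$, so $c_0\Omega_Y\le\hat\omega_t^n\le C_0\Omega_Y$, while $\|\psi_{T,\ve}\|_{L^\infty}\le C$ uniformly in $\ve$ by (\ref{psiconvergence}); a maximum principle argument as in Lemma \ref{Linfinity}(i) (here simpler, since $\hat\omega_t$ is non-degenerate) gives $\|\varphi_\ve\|_{L^\infty([T,T']\times Y)}\le C$ independent of $\ve$. Moreover the parabolic complex Monge--Amp\`ere equation obeys a comparison principle: if $\varphi^{(1)},\varphi^{(2)}$ both solve $\partial_t\varphi=\log\big((\hat\omega_t+\ddbar\varphi)^n/\Omega_Y\big)$ with $\hat\omega_t+\ddbar\varphi^{(j)}>0$, then at a spatial maximum of $\varphi^{(1)}-\varphi^{(2)}$ one has $\ddbar\varphi^{(1)}\le\ddbar\varphi^{(2)}$, hence $\partial_t(\varphi^{(1)}-\varphi^{(2)})\le 0$ there, so $\sup_Y(\varphi^{(1)}-\varphi^{(2)})(t)$ is non-increasing in $t$. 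Applying this to $\varphi_\ve$ and $\varphi_{\ve'}$ gives $\|\varphi_\ve(t)-\varphi_{\ve'}(t)\|_{L^\infty}\le\|\psi_{T,\ve}-\psi_{T,\ve'}\|_{L^\infty}\to 0$ by (\ref{psiconvergence}); thus $(\varphi_\ve)$ is Cauchy in $C^0([T,T']\times Y)$ and converges to some $\varphi\in C^0([T,T']\times Y)$ with $\varphi|_{t=T}=\lim_{\ve\to 0}\psi_{T,\ve}=\psi_T$, which is continuous by Lemma \ref{lemmaLp}. This establishes (i) and the initial condition in (\ref{pmast}).

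For the interior estimates, fix $t_1\in(T,T']$ and abbreviate $\omega_\ve=\hat\omega_t+\ddbar\varphi_\ve$. Using $(\partial_t-\Delta)\dot\varphi_\ve=\tr{\omega_\ve}{\chi}$, the identity $\Delta\varphi_\ve=n-\tr{\omega_\ve}{\hat\omega_t}$, the bound $\tr{\omega_\ve}{\hat\omega_t}\ge c\,e^{-\dot\varphi_\ve/n}$ (arithmetic--geometric mean, with $\hat\omega_t^n/\Omega_Y\ge c$), and the smallness of $T'-T$ (so that $(t-T)|\tr{\omega_\ve}{\chi}|\le\tfrac12\tr{\omega_\ve}{\hat\omega_t}$), the maximum principle applied to $(t-T)\dot\varphi_\ve\mp\varphi_\ve$ gives $|\dot\varphi_\ve|\le C/(t-T)$ on $(T,T']\times Y$, uniformly in $\ve$. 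Feeding this into the second order estimate (\ref{tr1}) with a $(t-T)$-weighting, applied to $Q=(t-T)\log\tr{\omega_Y}{\omega_\ve}-A\varphi_\ve$ with $A$ large enough that $A\hat\omega_t$ absorbs the curvature term of $\omega_Y$, yields $\tr{\omega_Y}{\omega_\ve}\le e^{C/(t-T)}$; combined with $\omega_\ve^n=e^{\dot\varphi_\ve}\Omega_Y$ this gives a two-sided bound $c(t_1)\,\omega_Y\le\omega_\ve\le C(t_1)\,\omega_Y$ on $[t_1,T']\times Y$, uniform in $\ve$. A parabolic Calabi third order estimate (cf.\ \cite{Cao,PSS}) and then local parabolic Schauder estimates and bootstrapping bound $\|\varphi_\ve\|_{C^k([t_1',T']\times Y)}$ for every $k$ and every $t_1'\in(t_1,T']$, uniformly in $\ve$. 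By Arzel\`a--Ascoli and uniqueness of the $L^\infty$-limit, $\varphi_\ve\to\varphi$ in $C^\infty_{\mathrm{loc}}((T,T']\times Y)$, so $\varphi\in C^\infty((T,T']\times Y)$ solves the flow equation there, which together with $\varphi|_{t=T}=\psi_T$ gives (ii) and the existence half of (iii).

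Finally, for uniqueness in (iii), if $\varphi^{(1)},\varphi^{(2)}\in C^0([T,T']\times Y)\cap C^\infty((T,T']\times Y)$ both solve (\ref{pmast}), then $\hat\omega_t+\ddbar\varphi^{(j)}>0$ for $t>T$ and the comparison principle above shows $\sup_Y(\varphi^{(1)}-\varphi^{(2)})(t)$ is non-increasing on $(T,T']$; letting $t\to T^+$ and using continuity together with $\varphi^{(1)}|_{t=T}=\varphi^{(2)}|_{t=T}=\psi_T$ gives $\sup_Y(\varphi^{(1)}-\varphi^{(2)})\le 0$, and symmetrically $\varphi^{(1)}=\varphi^{(2)}$. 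The main difficulty is the interior second order estimate near $y_0$ for $t>T$: the approximating data $\psi_{T,\ve}$ has second derivatives that blow up near $y_0$ as $\ve\to 0$, so one cannot expect a bound uniform up to $t=T$ and must instead rely on the parabolic smoothing through the weight $(t-T)$ --- which is precisely why the conclusion asserts smoothness only on $(T,T']$ rather than on $[T,T']$.
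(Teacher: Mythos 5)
Your argument is correct in outline and supplies precisely what the paper outsources: the paper's own ``proof'' of Proposition \ref{propst} is a one-line citation of the general result of \cite{SoT3}, and your scheme --- uniform $L^\infty$ bounds, the comparison principle giving $\|\varphi_{\ve}(t)-\varphi_{\ve'}(t)\|_{L^{\infty}}\le\|\psi_{T,\ve}-\psi_{T,\ve'}\|_{L^{\infty}}$ so that (\ref{psiconvergence}) makes $(\varphi_{\ve})$ uniformly Cauchy, followed by $(t-T)$-weighted bounds on $\dot{\varphi}_{\ve}$ and on $\tr{\omega_Y}{\omega_{\ve}}$, Calabi third order estimates and parabolic Schauder bootstrapping --- is exactly the strategy of \cite{SoT3}, so this is the same route rather than a genuinely different one. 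Two small points to tighten. First, you shrink $T'$ so that $(t-T)|\tr{\omega_{\ve}}{\chi}|\le\tfrac12\tr{\omega_{\ve}}{\hat{\omega}_t}$, but the proposition is stated on the given interval $[T,T']$ and assertions (i)--(ii) concern convergence of $\varphi_{\ve}$ on all of it; you should either propagate the convergence from $[T,T'']$ to $[T'',T']$ (easy: $\varphi_{\ve}(T'')\to\varphi(T'')$ in $C^{\infty}$ and the same comparison principle and interior estimates apply with smooth, uniformly equivalent data on $[T'',T']$), or avoid shrinking altogether by replacing the factor $\tfrac12$ with a large constant $A$ in the barriers $(t-T)\dot{\varphi}_{\ve}\mp A\varphi_{\ve}$ and in $Q=(t-T)\log\tr{\omega_Y}{\omega_{\ve}}-A\varphi_{\ve}$, using only that $\hat{\omega}_t\ge c\,\omega_Y$ and $|\chi|\le C\omega_Y$ uniformly on $[T,T']$. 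Second, in the uniqueness step you should state explicitly that a solution of (\ref{pmast}) is by definition one with $\hat{\omega}_t+\ddbar\varphi(t)>0$ for $t>T$ (positivity of the form is not formally forced by the equation alone); with that convention your monotonicity of $\sup_Y(\varphi^{(1)}-\varphi^{(2)})(t)$ on $(T,T']$ and the limit $t\to T^{+}$, using continuity up to $t=T$, do give uniqueness.
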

\begin{proof}  This is a special case of a general result proved in \cite{SoT3} and so we omit the proof.  \qed
\end{proof}

In order to prove sharper bounds on $\varphi_{\ve}$ and $\varphi$, we will need estimates on $\psi_{T, \varepsilon}$.  Write 
\begin{equation}
\omega_{T, \ve} := T\omega_Y + \ddbar \psi_{T, \ve},
\end{equation} 
and write $g_{T, \ve}$ for the corresponding K\"ahler metric.  For convenience we will write the function $(\pi|_{X\setminus E}^{-1})^* (|s|_h^2)$ on $Y\setminus \{ y_0\}$ as $|s|_h^2$. Making use of the estimates of Section \ref{sectionhigher}, we prove:

\begin{lemma} \label{lemmabdpsi}  
There exist positive constants $C, \alpha$, independent of $\ve$, such that on $Y \setminus \{ y_0 \}$,
\begin{equation} \label{otep1}
\displaystyle{\frac{|s|_h^{2\alpha}}{C} \omega_Y \le \omega_{T, \ve} \le \frac{C}{|s|_h^{2\alpha}} \omega_Y}.
\end{equation}
Fix a large positive integer $N$.  Then for each integer $0 \le m \le N$ there exist $C_m, \alpha_m>0$ such that
\begin{equation} \label{metricbound2}
|(\nabla_{\mathbb{R}}^Y)^m g_{T, \ve}   |_{g_Y} \leq \frac{C_m}{|s|_h^{2\alpha_m}},
\end{equation}
where $\nabla_{\mathbb{R}}^Y$ denotes the real covariant derivative with respect to the fixed metric $g_Y$.
\end{lemma}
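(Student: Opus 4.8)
The plan is to run the usual a priori estimate machinery for the elliptic complex Monge--Amp\`ere equation (\ref{mapsi}) — the $C^0$ bound is already in (\ref{psiconvergence}) — carrying along a weight $|s|_h^{2\alpha}$ exactly as in Section \ref{sectionhigher}, the essential new input being uniform-in-$\ve$ control of the right-hand side $C_\ve\Omega_\ve$ near $y_0$. Away from $y_0$ nothing is needed: on any fixed $\{|s|_h\ge\rho\}$ the $\Omega_\ve$ converge to ${\omega'}^n$ in $C^\infty$, and by (\ref{psiconvergence}) and local elliptic theory for (\ref{mapsi}) the $\psi_{T,\ve}$ converge to $\psi_T$ in $C^\infty$; so all the estimates are automatic there, and the work is concentrated near $y_0$, where I identify $\pi^{-1}(D_{1/2}\setminus\{0\})$ with $D_{1/2}\setminus\{0\}$ and use the estimates of Section \ref{sectionhigher} at time $T-\ve$.

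\textbf{Step 1: bounds on $C_\ve\Omega_\ve$.} First, $C_\ve$ is bounded above and away from zero uniformly in $\ve$: $C_\ve\int_Y\Omega_\ve=\int_Y\omega_Y^n$, while $\int_Y\Omega_\ve\le\int_X\omega^n(T-\ve)+\ve\int_Y\Omega_Y=[\hat\omega_{T-\ve}]^n+\ve\int_Y\Omega_Y$ is bounded and $\int_Y\Omega_\ve\gtrsim\int_{\{|s|_h\ge 1/2\}}\omega^n(T-\ve)$ is bounded below. Writing $C_\ve\Omega_\ve=e^{F_\ve}\omega_Y^n$ on $Y\setminus\{y_0\}$, I claim
\[
\frac{|s|_h^{2K}}{C}\le e^{F_\ve}\le\frac{C}{|s|_h^{2\beta}},\qquad \big|(\nabla_{\mathbb{R}}^Y)^mF_\ve\big|\le\frac{C_m}{|s|_h^{2\beta_m}}\ \ (0\le m\le N+2),
\]
with constants independent of $\ve$. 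The pointwise bounds follow by combining (\ref{so1}) for $\omega(T-\ve)$ with Lemma \ref{lemmacomp} (to compare $\omega_0$ with $\pi^*\omega_Y$) and the fact that the cutoff factor lies in $[0,1]$, handling the regions $|s|_h^{2K}\ge\ve$ and $|s|_h^{2K}\le\ve$ separately (in the latter, $\ve\,\Omega_Y\ge\ve\,\omega_Y^n/C\ge|s|_h^{2K}\omega_Y^n/C$). For the derivative bounds the key elementary fact is that, by $\ve+t^K\ge2\sqrt{\ve\,t^K}$,
\[
\Big|\tfrac{d^j}{dt^j}\tfrac{t^K}{\ve+t^K}\Big|\le\frac{C_j}{t^j},\qquad t=|s|_h^2>0,
\]
\emph{uniformly in $\ve$}, so the cutoff factor has coordinate derivatives of order $m$ bounded by $C_m/|s|_h^{2m}$ independently of $\ve$; combined with the observation that the proof of Corollary \ref{cormetricbd}, being carried out in coordinates near $y_0$, in fact bounds the coordinate derivatives of the components of $g(T-\ve)$ — hence of $\omega^n(T-\ve)/\omega_Y^n$ — by $C_m/|s|_h^{2\alpha_m}$ uniformly in $\ve$, and with the smoothness of $\Omega_Y$, this gives the claimed bound on $F_\ve$.

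\textbf{Step 2: weighted $C^2$ estimate; Step 3: higher orders.} Apply the Aubin--Yau / Chern--Lu second-order estimate to $Q=\log\!\big(|s|_h^{2\alpha}\tr{\omega_Y}{\omega_{T,\ve}}\big)-A\psi_{T,\ve}$, following the weighted maximum-principle argument of Lemma \ref{lemmaest1} with the flow replaced by (\ref{mapsi}): for fixed $\ve$, $Q\in C^0(Y)$ and $Q\to-\infty$ at $y_0$, so $Q$ has an interior maximum, where — using $\Delta_{\omega_{T,\ve}}\log|s|_h^2=\tr{\omega_{T,\ve}}{(-R(h))}$ (from (\ref{Rh})), the bound on $\psi_{T,\ve}$ from (\ref{psiconvergence}), and $|\ddbar F_\ve|_{\omega_Y}\le C/|s|_h^{2\beta_1}$ from Step 1 (entering via $\textrm{Ric}(\omega_{T,\ve})=\textrm{Ric}(\omega_Y)-\ddbar F_\ve$) — one absorbs all negative powers of $|s|_h$ for $\alpha$ large and gets $\tr{\omega_Y}{\omega_{T,\ve}}\le C/|s|_h^{2\alpha}$; the matching lower bound follows from $\omega_{T,\ve}\ge(n-1)!\,(\det_{\omega_Y}\omega_{T,\ve})(\tr{\omega_Y}{\omega_{T,\ve}})^{-(n-1)}\omega_Y$ together with the lower bound on $\det_{\omega_Y}\omega_{T,\ve}=C_\ve\Omega_\ve/\omega_Y^n$, which is (\ref{otep1}). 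Granted (\ref{otep1}) the equation is uniformly elliptic away from $y_0$ up to powers of $|s|_h$, so Yau's third-order estimate — run on $|s|_h^{4K'}S_\ve+A|s|_h^{2K'}\tr{\omega_Y}{\omega_{T,\ve}}$ with $S_\ve$ defined as $S$ in Proposition \ref{propS} but with $g_0,g$ replaced by $g_Y,g_{T,\ve}$ — gives $|\nabla_{\mathbb{R}}^Y g_{T,\ve}|_{g_Y}\le C/|s|_h^{2\alpha_1}$, and the inductive weighted elliptic estimates of Proposition \ref{propRm} / Corollary \ref{cormetricbd} (local Schauder and $L^p$ theory for the differentiated equation, with weights $|s|_h^{2\alpha_m}$ and the derivative bounds on $F_\ve$ from Step 1) then yield (\ref{metricbound2}) for all $0\le m\le N$; only finitely many orders are reachable because $\Omega_\ve\in C^\ell$ only, but $N$ is fixed first and $K$ (hence $\ell$) taken large accordingly, cf.\ the remarks after (\ref{mapsi}).

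\textbf{Expected main obstacle.} The genuinely new point is Step 1 — controlling $\Omega_\ve$ and its derivatives near $y_0$ with bounds that do not degenerate as $\ve\to0$ — which rests on the AM--GM estimate for the derivatives of $t^K/(\ve+t^K)$ and on reading Corollary \ref{cormetricbd} as a coordinate estimate near $y_0$. Once that is done, Steps 2 and 3 are the weighted maximum-principle and bootstrap arguments of Sections \ref{sectex} and \ref{sectionhigher}, transcribed from the parabolic to the elliptic setting.
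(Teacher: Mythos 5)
Your proposal follows essentially the same route as the paper's proof: a weighted Yau-type maximum principle applied to $\log\left(|s|_h^{2\alpha}\tr{\omega_Y}{\omega_{T,\ve}}\right)-B\psi_{T,\ve}$ (using the uniform $C^0$ bound on $\psi_{T,\ve}$ from Kolodziej stability and the bound $|\Delta_{\omega_Y}F_\ve|\le C|s|_h^{-2\beta}$ coming from the definition of $\Omega_\ve$, (\ref{so1}) and Corollary \ref{cormetricbd}), followed by the weighted third-order estimate and the elliptic bootstrap of Section \ref{sectionhigher}. The paper compresses your Step 1 into a one-line citation, so your explicit uniform-in-$\ve$ control of the cutoff $|s|_h^{2K}/(\ve+|s|_h^{2K})$ and of $C_\ve$ is a correct elaboration of the same argument rather than a different method.
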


\begin{proof}
Write $\tilde{\omega} = \omega_{T, \ve}$ and $\hat{\omega} = \hat{\omega}_T = T \omega_Y$ and observe that $\tilde{\omega}$ satisfies the equation (\ref{mapsi}), which can be written
\begin{equation}
\tilde{\omega}^n = (\hat{\omega} + \ddbar \psi_{T,\ve})^n = e^{F_{\ve}} \hat{\omega}^n, \quad \textrm{where }\  F_{\ve}=  \log \left( \frac{C_{\ve}\Omega_{\ve}}{\hat{\omega}^n} \right).
\end{equation}
We define
\begin{equation}
Q = \log ((\tr{\hat{\omega}}{\tilde{\omega}}) |s|_h^{2\alpha}) - B \psi_{T, \ve},
\end{equation}
where $B$ is  a large constant to be determined later.
Observe that  $Q(x)$ tends to negative infinity as $x$ tends to $y_0$.  Moreover, $\psi_{T, \ve}$ is uniformly bounded by (\ref{psiconvergence}).
Following the well-known second order estimate of Yau \cite{Y1}, we compute at a point in $Y \setminus \{y_0\}$,
\begin{equation} \label{lapL}
\Delta_{\tilde{\omega}} Q \ge - C\tr{\tilde{\omega}}{\hat{\omega}} - C + \Delta_{\hat{\omega}} F_{\ve} - \alpha \tr{\tilde{\omega}}{(R(h))} + B \tr{\tilde{\omega}}{\hat{\omega}},
\end{equation}
where $C$ is a uniform constant and we recall that $R(h)$ is the curvature of the Hermitian metric $h$.  From the definition of $\Omega_{\ve}$ together with (\ref{so1}) and Corollary \ref{cormetricbd} we have
\begin{equation} \label{estimateF}
|\Delta_{\hat{\omega}} F_{\ve} | \le \frac{C}{|s|^{2\beta}_h},
\end{equation}
for uniform constants $C$ and $\beta$, which are independent of $\ve$.  Then at a maximum point of $Q$ we have from (\ref{lapL}) and (\ref{estimateF}),
\begin{equation}
\tr{\tilde{\omega}}{\hat{\omega}} \le \frac{C}{|s|_h^{2\beta}},
\end{equation}
by choosing $B$ sufficiently large.
Then at this maximum point we have
\begin{eqnarray} \nonumber
(\tr{\hat{\omega}}{\tilde{\omega}}) |s|_h^{2\alpha} & \le & \frac{1}{(n-1)!}   \left(\tr{\tilde{\omega}}{\hat{\omega}}\right)^{n-1}  \, \left(\frac{\tilde{\omega}^n}{\hat{\omega}^n}\right)|s|_h^{2\alpha} \\
& \le & C |s|_h^{2(\alpha-\beta')}
\end{eqnarray}
for some constants $\beta'$ and $C$.  Choosing $\alpha$ sufficiently large we see that $Q$ is uniformly bounded from above and (\ref{otep1}) follows.

For (\ref{metricbound2}), we first observe that by  
 Yau's third order estimate (cf. equation (2.43) of \cite{PSS}) we have for constants $C$ and $\beta$,
\begin{equation}
\Delta_{\tilde{\omega}} S \ge - \frac{C}{|s|_h^{2\beta}} \left( | \nabla_{\tilde{\omega}} \textrm{Ric}(\tilde{\omega})|_{\tilde{\omega}}^2 + S +1\right),
\end{equation}
where $S= | \nabla H_{T, \ve} \, H_{T, \ve}^{-1}|^2$ where $(H_{T, \ve})^i_{\ell} = g_Y^{i \ov{j}} (g_{T, \ve})_{\ell \ov{j}}$.  Applying Corollary \ref{cormetricbd} again, we have 
\begin{equation} \label{derivRicbd}
| \nabla_{\tilde{\omega}} \textrm{Ric}(\tilde{\omega})|_{\tilde{\omega}}^2 \le \frac{C}{|s|_h^{2\beta'}}.
\end{equation}

Hence, after possibly increasing $\beta$ and $C$, we have
\begin{equation}
\Delta_{\tilde{\omega}} S \ge - \frac{C}{|s|_h^{2\beta}} \left( S +1\right),
\end{equation}
and from Yau's second order estimate,
\begin{equation}
\Delta_{\tilde{\omega}} \tr{\tilde{\omega}}{\hat{\omega}} \ge \frac{|s|_h^{2\beta} S}{C} - \frac{C}{|s|_h^{2\beta}}.
\end{equation}
Applying the maximum principle to the quantity $|s|^{2\alpha_1}_h S + A |s|^{2\alpha_0} \tr{\tilde{\omega}}{\hat{\omega}}$, where we choose $\alpha_0$, $\alpha_1$ and $A$ sufficiently large, we obtain
$|S| \le \frac{C}{|s|^{2\alpha}}.$

It is now a straightforward matter to obtain (\ref{metricbound2}) using a similar method to that of Section \ref{sectionhigher}.  Indeed, we can obtain estimates on all  covariant derivatives of the curvature of $\omega_{T, \ve}$ by an elliptic analogue of Proposition \ref{propRm} (for the sake of brevity, we omit the proof).   Then (\ref{metricbound2}) follows by the same argument as in Corollary \ref{cormetricbd}.
 \qed
\end{proof}

We now prove estimates on the evolving metric on $Y$ after time $T$.
Define for $t \in [T, T']$,
\begin{equation} \label{omegave}
\omega_{\ve}(t) = \hat{\omega}_t + \ddbar\varphi_{\ve}.
\end{equation}
We first obtain an estimate on the volume form of $\omega_{\ve}=\omega_{\ve}(t)$ for $t \in [T, T']$.

\begin{lemma} \label{lemmavomegave} There exist positive constants $\alpha$ and $C$, independent of $\ve$, such that
\begin{equation} \label{vomegave}
\frac{ \omega_{\ve}^n }{\Omega_Y} \leq \frac{C}{  |s|_h^{2\alpha}}
\end{equation}
on $[T, T'] \times (Y \setminus \{ y_0 \})$.
\end{lemma}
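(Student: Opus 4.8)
The plan is to control $\omega_{\ve}^n/\Omega_Y = e^{\dot{\varphi}_{\ve}}$ by producing an upper bound $\dot{\varphi}_{\ve} \le C + \alpha \log(1/|s|_h^2)$, uniform in $\ve$, via a maximum principle argument on $[T,T']\times(Y\setminus\{y_0\})$ in which $\alpha\log|s|_h^2$ serves as a barrier pushing the test function to $-\infty$ at $y_0$.  This is in the same spirit as the bound on $\dot{\varphi}$ in Lemma \ref{Linfinity}(ii) and the weighted estimates of Section \ref{sectionhigher}.

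\textbf{Step 1: the bound at $t=T$.}  The initial condition $\varphi_{\ve}|_{t=T}=\psi_{T,\ve}$ together with (\ref{mapsi}) gives $\omega_{\ve}(T)^n = C_{\ve}\Omega_{\ve}$, hence $\dot{\varphi}_{\ve}|_{t=T} = \log(C_{\ve}\Omega_{\ve}/\Omega_Y)$.  From the formula (\ref{Omegaepsilon}) for $\Omega_{\ve}$, the trivial inequality $|s|_h^{2K}/(\ve+|s|_h^{2K})\le 1$, the estimate $\omega^n(T-\ve)\le C\Omega$ of Lemma \ref{Linfinity}(iii), the uniform boundedness of $C_{\ve}$ in $\ve$, and the elementary fact that $\Omega/\pi^*\Omega_Y \le C|s|_h^{-2(n-1)}$ near $E$ (immediate from the local description of the blow-down in Section \ref{sectex}: $\pi^*\Omega_Y$ vanishes to order $2(n-1)$ along $E$ while $|s|_h^2$ vanishes to order $2$), I obtain
\[
\dot{\varphi}_{\ve}|_{t=T} \le \alpha_0 \log \frac{1}{|s|_h^2} + C, \qquad \alpha_0 := n-1.
\]

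\textbf{Step 2: propagation by the maximum principle.}  Fix $\alpha \ge \alpha_0$ and consider, on $[T,T']\times(Y\setminus\{y_0\})$, the quantity
\[
Q = \dot{\varphi}_{\ve} - A\varphi_{\ve} + \alpha \log|s|_h^2,
\]
with $A$ to be chosen large depending on $\alpha$, and $\Delta$ the Laplacian of $\omega_{\ve}$.  Using $\left( \ddt{}-\Delta\right)\dot{\varphi}_{\ve} = \tr{\omega_{\ve}}{\chi}$, $\left(\ddt{}-\Delta\right)\varphi_{\ve} = \dot{\varphi}_{\ve} - n + \tr{\omega_{\ve}}{\hat{\omega}_t}$, and $\ddbar \log|s|_h^2 = -R(h)$ on $Y\setminus\{y_0\}$ so that $\left(\ddt{}-\Delta\right)\log|s|_h^2 = \tr{\omega_{\ve}}{R(h)}$, together with $-C_0\omega_Y \le \chi \le C_0\omega_Y$, $R(h)\le C_0\omega_Y$ (valid since $R(h)\le 0$ near $E$ by (\ref{Rh}) and $R(h)$ is a bounded smooth form elsewhere), and $\hat{\omega}_t \ge c_1 \omega_Y$ on $[T,T']$ after shrinking $T'$, one computes
\[
\left( \ddt{}-\Delta\right)Q \le \big(C_0(1+\alpha) - Ac_1\big)\tr{\omega_{\ve}}{\omega_Y} - A\dot{\varphi}_{\ve} + An.
\]
Choosing $A$ with $Ac_1 \ge C_0(1+\alpha)+1$ makes the right-hand side at most $-\tr{\omega_{\ve}}{\omega_Y} - A\dot{\varphi}_{\ve}+An$.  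For each fixed $\ve>0$, $\dot{\varphi}_{\ve}$ and $\varphi_{\ve}$ are bounded on $[T,T']\times Y$ while $\alpha\log|s|_h^2 \to -\infty$ at $y_0$, so $Q$ attains its maximum at some $(x_0,t_0)$.  If $t_0>T$, then $\left( \ddt{}-\Delta\right)Q\ge 0$ at $(x_0,t_0)$ forces $\dot{\varphi}_{\ve}(x_0,t_0)\le n$, hence (using $\log|s|_h \le 0$ and $\|\varphi_{\ve}\|_{L^{\infty}}\le C$ from Proposition \ref{propst}(i)) $Q(x_0,t_0)\le C$.  If $t_0=T$, then Step 1, $\alpha\ge\alpha_0$, $\log|s|_h\le 0$ and $\|\psi_{T,\ve}\|_{L^{\infty}}\le C$ (from (\ref{psiconvergence})) again give $Q(x_0,t_0)\le C$.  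In either case $Q\le C$ with $C$ independent of $\ve$, so $\dot{\varphi}_{\ve} = Q + A\varphi_{\ve} - \alpha\log|s|_h^2 \le C + \alpha\log\frac{1}{|s|_h^2}$, which exponentiates to (\ref{vomegave}).

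The main obstacle is Step 1: verifying the uniform boundedness of $C_{\ve}$ and pinning down the precise degeneration rate of $\Omega/\pi^*\Omega_Y$ along $E$.  There is also a mild technical point in Step 2, namely that the supremum of $Q$ is attained although $Y\setminus\{y_0\}$ is noncompact (this uses that $\dot{\varphi}_{\ve}$ is bounded on $[T,T']\times Y$ for each fixed $\ve$ by standard parabolic theory); the interior computation and the choice of constants are routine.
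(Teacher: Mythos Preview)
Your proof is correct and follows essentially the same approach as the paper: the test quantity $Q=\dot{\varphi}_{\ve}+\alpha\log|s|_h^2-A\varphi_{\ve}$, the initial bound at $t=T$ coming from the explicit form of $\Omega_{\ve}$ together with the volume bound $\omega^n\le C\Omega$, and the parabolic maximum principle using $\chi+\alpha R(h)-A\hat{\omega}_t<0$ for $A$ large. Your write-up is in fact more explicit than the paper's in two respects: you identify the precise exponent $\alpha_0=n-1$ needed at $t=T$ (the paper simply says ``$\alpha$ sufficiently large''), and you spell out why $R(h)\le C_0\omega_Y$ holds on $Y\setminus\{y_0\}$ and why the maximum of $Q$ is attained despite the noncompactness.
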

\begin{proof}  We use the maximum principle.  Consider the quantity
\begin{equation} \label{Q1}
Q = \dot{\varphi}_{\ve} + \alpha \log |s|_h^2 - A \varphi_{\ve}
\end{equation}
on $[T, T'] \times (Y \setminus \{ y_0\})$ for positive constants $\alpha$ and $A$ to be determined later.  Note that by Proposition \ref{propst}, $\varphi_{\ve}$ is uniformly bounded on $[T, T'] \times Y$, independent of $\ve$. We have
\begin{equation}
Q|_{t=T} = \frac{C_{\ve} \Omega_{\ve}}{\Omega_Y} + \alpha \log |s|_h^2 - A \psi_{T, \ve}.
\end{equation}
From the definition of $\Omega_{\ve}$ and Lemma \ref{lemmaest1} we see that $Q$ is uniformly bounded from above, independent of $\ve$, at time $T$ if we choose $\alpha$ sufficiently large.  Moreover, the quantity $Q(x,t)$ tends to negative infinity as $x$ tends to $y_0$, for any $t \in [T, T']$.

Compute
\begin{eqnarray} \nonumber
\left( \ddt{} - \Delta_{\omega_{\ve}} \right) Q & = & \textrm{tr}_{\omega_{\ve}} ( \chi + \alpha R(h) + A(\omega_{\ve} - \hat{\omega}_{t})) - A \dot{\varphi}_{\ve} \\
& = & \textrm{tr}_{\omega_{\ve}}( \chi  + \alpha R(h) - A \hat{\omega}_t) + An - A(Q - \alpha \log |s|_h^2 + A \varphi_{\ve}).
\end{eqnarray}
Choosing $A$  large enough so that $\chi + \alpha R(h) - A \hat{\omega}_t <0$ and using the fact that $\varphi_{\ve}$ is uniformly bounded, we obtain
\begin{equation}
\left( \ddt{} - \Delta_{\omega_{\ve}} \right) Q  \le  C' - AQ,
\end{equation}
for some uniform constant $C'$,
and a standard maximum principle argument shows that $Q$ is uniformly bounded from above for $t$ in  $[T,T']$.  The estimate (\ref{vomegave}) follows immediately. \qed
\end{proof}

We will make use of the volume bound of Lemma \ref{lemmavomegave}, as well as the estimates of Lemma \ref{lemmabdpsi}  to bound the metric $\omega=\omega(t)$ from above.

\begin{lemma}  \label{afterT}
There exist positive constants $\alpha$ and $C$, independent of $\ve$, such that on $[T, T'] \times (Y \setminus \{ y_0 \})$, we have
\begin{equation} \label{oe1}
\displaystyle{\frac{|s|_h^{2\alpha}}{C} \omega_Y \le \omega_{\ve} \le \frac{C}{|s|_h^{2\alpha}} \omega_Y}.
\end{equation}
Fix a large positive integer $N$.  Then for each integer $0 \le m \le N$ there exist $C_m, \alpha_m>0$ such that  for $t \in [T, T']$,
\begin{equation} \label{metricbound3}
|(\nabla_{\mathbb{R}}^Y)^m g_{\ve}   |_{g_Y} \leq \frac{C_m}{|s|_h^{2\alpha_m}}.
\end{equation}
\end{lemma}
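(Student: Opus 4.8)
The plan is to run maximum principle arguments for the K\"ahler--Ricci flow $\omega_\ve(t)$ on $Y$ over $[T,T']$, modelled on Sections \ref{sectex} and \ref{sectionhigher}, but now tracking the data on the slice $t=T$ through Lemma \ref{lemmabdpsi} and the volume bound of Lemma \ref{lemmavomegave}. Recall that on $(T,T']$ each $\omega_\ve$ is a genuine K\"ahler metric on all of $Y$ (Proposition \ref{propst}), that $\varphi_\ve$ is uniformly bounded on $[T,T']\times Y$ independently of $\ve$ (also Proposition \ref{propst}), that $\hat\omega_t$ is uniformly equivalent to $\omega_Y$ for $t\in[T,T']$, that $R(h)\le 0$ near $y_0$ and is otherwise a fixed smooth form on a region where $|s|_h$ is bounded away from $0$, and that $\omega_\ve$ in fact solves $\ddt{}\omega_\ve=-\textrm{Ric}(\omega_\ve)$ on $[T,T']$ (since $\ddbar\log\Omega_Y=\chi=\ddt{}\hat\omega_t$), so the parabolic identities $(\ddt{}-\Delta_{\omega_\ve})\dot\varphi_\ve=\tr{\omega_\ve}{\chi}$, $\Delta_{\omega_\ve}\varphi_\ve=n-\tr{\omega_\ve}{\hat\omega_t}$, inequality (\ref{tr1}), the parabolic Schwarz lemma of Lemma \ref{lemmack}(i), and Hamilton's evolution equation (\ref{evolveRm}) are all available.

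For the upper half of (\ref{oe1}) I would apply the maximum principle on $[T,T']\times Y$ to
$$Q=\log\tr{\omega_Y}{\omega_\ve}+\alpha\log|s|_h^2-A\varphi_\ve,$$
using (\ref{tr1}) with fixed metric $\omega_Y$, i.e. $(\ddt{}-\Delta_{\omega_\ve})\log\tr{\omega_Y}{\omega_\ve}\le C_0\tr{\omega_\ve}{\omega_Y}$, together with $(\ddt{}-\Delta_{\omega_\ve})(\alpha\log|s|_h^2)=\alpha\tr{\omega_\ve}{R(h)}\le \alpha C\tr{\omega_\ve}{\omega_Y}$ and $-A(\ddt{}-\Delta_{\omega_\ve})\varphi_\ve=-A\dot\varphi_\ve+An-A\tr{\omega_\ve}{\hat\omega_t}$. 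I would fix first $\alpha$ large enough that $Q|_{t=T}\le C$ (by the upper bound on $\omega_{T,\ve}$ in Lemma \ref{lemmabdpsi}, provided $\alpha$ exceeds the exponent there, and using that $\log|s|_h^2\le 0$) and that $Q\to-\infty$ as $x\to y_0$ when $t>T$, and then $A$ large depending on $\alpha$ so that the curvature terms are dominated by $A\tr{\omega_\ve}{\hat\omega_t}\ge cA\tr{\omega_\ve}{\omega_Y}$, leaving $(\ddt{}-\Delta_{\omega_\ve})Q\le -\tr{\omega_\ve}{\omega_Y}-A\dot\varphi_\ve+An$. At an interior maximum, inserting $\tr{\omega_\ve}{\omega_Y}\ge c'(\Omega_Y/\omega_\ve^n)^{1/n}=c'e^{-\dot\varphi_\ve/n}$ forces $\dot\varphi_\ve$, hence $\omega_\ve^n/\omega_Y^n$, to be pinched between uniform constants at that point; combining with $\tr{\omega_Y}{\omega_\ve}\le C(\tr{\omega_\ve}{\omega_Y})^{n-1}(\omega_\ve^n/\omega_Y^n)$ and the volume bound of Lemma \ref{lemmavomegave} (requiring $\alpha$ large) gives $Q\le C$ at that point, hence everywhere, which is the upper bound. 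For the lower half of (\ref{oe1}) I would argue identically with $Q=\log\tr{\omega_\ve}{\omega_Y}+\alpha\log|s|_h^2-A\varphi_\ve$, replacing (\ref{tr1}) by the parabolic Schwarz lemma as used in Lemma \ref{lemmack}(i) for the identity map $(Y,\omega_\ve(t))\to(Y,\omega_Y)$; the slice $t=T$ is again controlled by Lemma \ref{lemmabdpsi}.

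Given the two-sided bound (\ref{oe1}), hence a two-sided bound on $\omega_\ve^n/\omega_Y^n$, the higher order bounds (\ref{metricbound3}) follow exactly as in Section \ref{sectionhigher}: first the parabolic third order (Calabi--Yau) estimate $S\le C|s|_h^{-2\alpha}$ for $S=|\nabla H\,H^{-1}|^2$, $H^i_{\ \ov{j}}$ wait $H^i_{\ \ell}=g_Y^{i\ov{j}}(g_\ve)_{\ell\ov{j}}$, by the maximum principle applied to $|s|_h^{4K}S+A|s|_h^{2K}\tr{\omega_Y}{\omega_\ve}$ as in Proposition \ref{propS}; then $|\nablaR^m\textrm{Rm}(g_\ve)|\le C_m|s|_h^{-2\alpha_m}$ by induction on $m$ using (\ref{evolveRm}) as in Proposition \ref{propRm}; and finally (\ref{metricbound3}) via the local elliptic bootstrap of Corollary \ref{cormetricbd}. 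In each of these the data on the slice $t=T$ is the metric $g_{T,\ve}$, whose first $N$ derivatives are controlled by (\ref{metricbound2}), which is precisely why the integer $N$ enters both statements; and every constant produced is independent of $\ve$ because every input --- Lemmas \ref{lemmabdpsi} and \ref{lemmavomegave}, and the boundedness of $\varphi_\ve$ --- is.

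The main obstacle is the lower half of (\ref{oe1}). Before time $T$ one has the two-sided bound (\ref{so1}) essentially for free from Lemmas \ref{lemmack}, \ref{lemmacomp}, \ref{lemmaest1}; here one must produce an upper bound on $\tr{\omega_\ve}{\omega_Y}$ --- equivalently a lower bound on the smallest eigenvalue of $\omega_\ve$, which also forces a lower bound on the volume --- on $[T,T']$ from scratch, and the barrier quantity must be arranged so that the degenerate, and only $C^N$-regular, data on the slice $t=T$, where the work of Lemma \ref{lemmabdpsi} is used, is absorbed by the weight $|s|_h^{2\alpha}$. The subsidiary nuisance is keeping track of the various exponents ($\alpha$ versus those arising from Lemmas \ref{lemmabdpsi} and \ref{lemmavomegave}) so that each weighted quantity is genuinely bounded on the parabolic boundary $(\{T\}\times Y)\cup([T,T']\times\{y_0\})$.
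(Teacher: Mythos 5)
Your proposal is correct and follows essentially the same route as the paper: the same weighted barrier $\log\bigl((\tr{\omega_Y}{\omega_{\ve}})|s|_h^{2\alpha}\bigr)-A\varphi_{\ve}$ with Lemma \ref{lemmabdpsi} controlling the slice $t=T$ and Lemma \ref{lemmavomegave} the volume for the upper bound, a parabolic Schwarz argument as in Lemma \ref{lemmack} for the lower bound, and then the Calabi-type third order estimate, Hamilton's curvature-derivative induction, and the elliptic bootstrap of Corollary \ref{cormetricbd} for (\ref{metricbound3}). The only differences are cosmetic (e.g.\ the exact powers of $|s|_h$ in the auxiliary quantities), so there is nothing to correct.
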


\begin{proof}
Define
\begin{equation} \label{Q2}
Q = \log \left( ( \tr{\omega_Y}{\omega_{\ve}})  |s|_h^{2\alpha} \right) - A \varphi_{\ve},
\end{equation}
for positive constants $\alpha$ and $A$ to be determined later.  Observe that the quantity $Q(x,t)$ tends to negative infinity as $x$ tends to $y_0$, for any $t \in [T, T']$. Moreover
from Lemma \ref{lemmabdpsi} we see that, choosing $\alpha$ sufficiently large, $Q$ is uniformly bounded from above at time $T$.

Compute, using (\ref{tr1}),
\begin{equation}
\left( \ddt{} - \Delta_{\omega_{\ve}} \right) Q \le  \textrm{tr}_{\omega_{\ve}} (C \omega_Y + \alpha R(h) + A(\omega_{\ve}-\hat{\omega}_t)) - A\dot{\varphi}_{\ve}
\end{equation}
for some uniform constant $C$.  Hence, choosing $A$ sufficiently large, we obtain
\begin{equation}
\left( \ddt{} - \Delta_{\omega_{\ve}} \right) Q  \le  - \tr{\omega_{\ve}}{\omega_Y} - A \log \frac{\omega_{\ve}^n}{\Omega_Y} + C,
\end{equation}
and thus at a point where $Q$ achieves its maximum we obtain
\begin{equation}
 \tr{\omega_{\ve}}{\omega_Y} + A \log \frac{\omega_{\ve}^n}{\Omega_Y} \le C.
\end{equation}
Then using the argument of Lemma \ref{lemmaest1} together with the volume bound of  Lemma \ref{lemmavomegave}, we see that, if $\alpha$ is sufficiently large, $Q$ is uniformly bounded from above independent of $\ve$.  Since $\varphi_{\ve}$ is uniformly bounded, the upper bound of $\omega_{\ve}$  follows.  The lower bound of $\omega_{\ve}$ follows from an argument similar to the proof of Lemma \ref{lemmack}.  

For (\ref{metricbound3}), we first bound
$S_{\ve} = | \nabla H_{\ve} \, H^{-1}_{\ve} |^2$, where  $(H_{\ve})^i_{\, \ell} = g_Y^{i \ov{j}} (g_{\ve})_{\ell \ov{j}}$.  The estimate $\displaystyle{|S_{\ve}| \le \frac{C}{|s|
_h^{2\alpha}}}$ follows by a similar argument to that of Proposition \ref{propS}, making use of the bound on $S$ at time $T$ provided by Lemma \ref{lemmabdpsi}.

We bound the curvature of $\omega_{\ve}$ and its covariant derivatives  by applying the argument of Proposition \ref{propRm} together with the estimates of Lemma \ref{lemmabdpsi}.  Then (\ref{metricbound3}) follows from the same proof as Corollary \ref{cormetricbd}.
\qed
\end{proof}

Recall from Proposition \ref{propst} that $\varphi(t)$ for $t \in [T,T']$ is the limit of $\varphi_{\ve}$ as $\ve$ tends to zero.  
The metric $\omega = \hat{\omega}_t + \ddbar \varphi$ for $t \in [T,T']$ is a solution of the K\"ahler-Ricci flow on $Y$:
\begin{equation}
\ddt{} \omega = - \textrm{Ric}(\omega), \quad \textrm{for } \ t \in (T, T'],
\end{equation}
and has $\omega|_{t=T} = \omega'$.   Lemma \ref{afterT} gives estimates on $\omega(t)$ for $t \in [T,T']$ on  $Y \setminus \{ y_0\}$, and Corollary \ref{cormetricbd} gives us estimates on $\omega(t)$ for $t \in [0,T)$ on $X \setminus E$.

We can now prove that the K\"ahler-Ricci flow can be smoothly connected at time $T$ between $[0, T) \times X$ and $(T,T'] \times Y$, outside $T \times \{ y_0\} \cong T \times E$ via the map $\pi$.  To make this precise, define
\begin{equation}
Z = ([0, T) \times X) \cup (T \times Y \setminus \{ y_0 \}) \cup ((T, T'] \times Y).
\end{equation}
Consider a family of metrics $\omega(t,x)$, for $(t,x) \in Z$.  We will define what it means to say that $\omega$ is smooth on $Z$.
  If $t \in [0,T)$ or $t \in (T, T']$ then we demand $\omega$ to be smooth at $t$ in the usual sense, in $X$ or $Y$ respectively.  On the other hand, if $(t,x) = (T,x) \in T \times Y \setminus \{ y_0 \} \cong T \times X \setminus E$ then we take a small neighborhood $U$ of $x$ in $X \setminus E$ and, via the map $\pi$, consider $\omega$ as a metric on $(T-\delta, T+\delta) \times U$, for some $\delta>0$.  We say $\omega$ is smooth at $(T,x)$ if $\omega$ is smooth at $(T,x)$ in $(T-\delta, T+\delta) \times U$.  In the same way, we can define what it means for $\omega(t)$ to satisfy a PDE (such as the K\"ahler-Ricci flow) at an arbitrary point of $Z$.

\begin{theorem}
The solution $\omega=\omega(t)$ is a smooth solution of the K\"ahler-Ricci flow in the space-time region $Z$.
\end{theorem}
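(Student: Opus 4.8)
The plan is to work locally near the hypersurface $\{t=T\}$ in $Z$. Since $\omega(t)$ is already a smooth solution of the K\"ahler-Ricci flow on $[0,T)\times X$ and on $(T,T']\times Y$, the only points at which smoothness must be checked are of the form $(T,x)$ with $x\in Y\setminus\{y_0\}$, which we identify via $\pi$ with a point of $X\setminus E$. Fix such a point and a relatively compact holomorphic coordinate ball $U\ni x$ with $\overline{U}\subset X\setminus E$; via $\pi$ we view $U$ at the same time as a coordinate ball in $Y\setminus\{y_0\}$, so that $\omega$ is a family of metrics on $(T-\delta,T+\delta)\times U$ for $\delta>0$ small (and $\delta\le T'-T$). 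On $\overline{U}$ the function $|s|_h$ is bounded away from zero and all the fixed metrics $g_0,\pi^*\omega_Y,\omega_Y,\chi$ and volume forms $\Omega,\Omega_Y$ are smooth and mutually equivalent.

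First I would assemble the one-sided behaviour on $U$. For $t\in[0,T)$, Lemma \ref{lemmack} gives $\omega(t)\to\omega_T$ in $C^\infty(U)$ as $t\to T^-$, while Corollary \ref{cormetricbd}, together with Lemma \ref{lemmack}.(i) and the fact that $|s|_h$ is bounded below on $\overline U$, gives uniform-in-$t$ two-sided bounds on $g(t)$ and on all its spatial derivatives on $\overline U$; in particular every spatial derivative of $g(t)$ converges uniformly on compact subsets of $U$ as $t\to T^-$. For $t\in(T,T']$, Lemma \ref{afterT} (with $N$ arbitrarily large) gives the same uniform spatial control of $g(t)$ on $\overline U$, and Proposition \ref{propst}.(iii) gives $\varphi(t)\to\psi_T$ in $C^0$. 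Combining these with Arzel\`a--Ascoli and $\ddbar\varphi(t)=\omega(t)-\hat{\omega}_{t,Y}$, any sequence $t_j\to T^+$ has a subsequence along which $\omega(t_j)$ converges in $C^\infty_{\mathrm{loc}}(U)$, necessarily to $\omega_Y+\ddbar\psi_T=\omega'$; hence $\omega(t)\to\omega'$ in $C^\infty(U)$ as $t\to T^+$. Finally, on $U$ one has $\omega'=\omega_T$, since by construction $\omega_T=\pi^*\omega_Y+\ddbar\varphi_T=\pi^*(\omega_Y+\ddbar\psi_T)=\pi^*\omega'$ on $X\setminus E$. Setting $\omega(T):=\omega_T=\omega'$ on $U$, we conclude that $\omega$ is a family of K\"ahler metrics on $(T-\delta,T+\delta)\times U$ which is jointly continuous and $C^\infty$ in the spatial variables, with all spatial derivatives continuous up to and across $t=T$, and with the $\omega(t)$ uniformly equivalent on $\overline U$.

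The key step is then to upgrade this to joint smoothness by trading time derivatives for spatial derivatives via the flow equation. For $t\neq T$, in the chosen coordinates on $U$ we have $\partial_t g_{i\bar j}=-R_{i\bar j}(g)=\partial_i\partial_{\bar j}\log\det g$, and iterating this identity expresses $\partial_t^k g$ as a fixed universal expression $P_k$ in $g$, its inverse, and the spatial derivatives of $g$ of order at most $2k$. By the previous paragraph, each such expression and each of its further spatial derivatives converges uniformly on compact subsets of $U$ as $t\to T^\pm$, and the two one-sided limits agree (both equal $P_k$ evaluated at $g_T$). Thus every mixed partial derivative of $\omega$ extends continuously across $\{t=T\}$; by the elementary fact that a function smooth on each side of a hyperplane, all of whose partial derivatives extend continuously across it, is itself smooth across it (obtained by iterating the fundamental theorem of calculus in the $t$-variable), we get $\omega\in C^\infty((T-\delta,T+\delta)\times U)$. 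Moreover $\partial_t\omega|_{t=T}=\lim_{t\to T}(-\textrm{Ric}(\omega(t)))=-\textrm{Ric}(\omega_T)$, so $\omega$ solves the K\"ahler-Ricci flow at $t=T$ as well. Since $x$ was an arbitrary point of $Y\setminus\{y_0\}$, this shows $\omega$ is a smooth solution of the K\"ahler-Ricci flow on all of $Z$.

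I expect the main obstacle to be organizational rather than analytic: one must carefully check that the estimates of Sections \ref{sectionhigher} and \ref{sectionafterT} really deliver uniform control of \emph{all} spatial derivatives of $g(t)$ on $\overline U$ from both sides, and — more essentially — that the limiting metric $\omega_T$ obtained as $t\to T^-$ pushes forward under $\pi$ to exactly the metric $\omega'$ from which the continued flow is started, so the two pieces match as tensors, not merely as distances, on $U$. Once the matching and the spatial regularity are in hand, no parabolic Schauder theory across $\{t=T\}$ is needed; alternatively one could introduce a local K\"ahler potential $\eta$ on $(T-\delta,T+\delta)\times U$ solving a single uniformly parabolic complex Monge-Amp\`ere equation valid across $t=T$ and invoke interior parabolic regularity, but the direct argument above seems cleanest.
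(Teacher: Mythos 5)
Your proposal is correct and follows essentially the same route as the paper, which deduces the theorem directly from Corollary \ref{cormetricbd} and Lemma \ref{afterT}; you simply spell out the details the paper leaves implicit (the identification $\pi^*\omega' = \omega_T$ away from $E$, and the standard conversion of time derivatives into spatial derivatives via the flow equation to obtain joint smoothness across $\{t=T\}$). No changes needed.
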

\begin{proof} We only need to check that $\omega$ satisfies the K\"ahler-Ricci flow and is smooth at time $T$ in the sense above.  This is immediate from  Corollary \ref{cormetricbd} and Lemma \ref{afterT}. \qed \end{proof}

This completes the proof that part (iv) in the definition of canonical surgical contraction holds under the assumptions of Theorem \ref{thmblowup}.


\section{Backwards Gromov-Hausdorff convergence} \label{sectionproof}

In this section, we complete the proof of Theorem \ref{thmblowup}.  It remains to show that $(Y, \omega(t))$ converges in the Gromov-Hausdorff sense to $(Y, d_T)$ as $t\rightarrow T^+$.  We have an analogue of Lemma \ref{lemmaest1}.

\begin{proposition} \label{extendest}
There exist $\delta>0$ and a uniform constant $C$ such that the following estimates hold for $\omega= \omega(t)$ with $t \in [T, T']$,
\begin{enumerate}
\item[(i)] $\displaystyle{
 \omega \le C \frac{\omega_Y}{(\pi|_{X\setminus E}^{-1})^* |s|_{h}^{2}} }$,
\item[(ii)] $\displaystyle{
 \omega \le C (\pi|_{X \setminus E}^{-1})^* \left( \frac{\omega_0 }{ |s|_{h}^{2(1- \delta)}} \right)}$,
\end{enumerate}
where we recall that $\omega_0$ is the initial metric on $X$.
\end{proposition}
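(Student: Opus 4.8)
The plan is to carry over the maximum‑principle argument proving Lemma~\ref{lemmaest1}, now run on $[T,T']\times(Y\setminus\{y_0\})$ with the reference metrics $\hat\omega_t=\omega_Y+(t-T)\chi$ of \eqref{hatomegatY} and the potential $\varphi=\varphi(t)$ of Proposition~\ref{propst} (so $\varphi|_{t=T}=\psi_T$ is bounded), using as inputs the estimates of Sections~\ref{sectionhigher}--\ref{sectionafterT}. Identify $Y\setminus\{y_0\}$ with $X\setminus E$ via $\pi$ and set $\Theta:=(\pi^{-1})^*\omega_X=\omega_Y-\ve_0 R(h)$; by Lemma~\ref{lemmaomegaX} this is a K\"ahler metric on $Y\setminus\{y_0\}$, uniformly equivalent to $(\pi^{-1})^*\omega_0$, with bisectional curvature bounded below (the curvature tensor is invariant under the biholomorphism $\pi$, and $\omega_X$ has bounded curvature on the compact manifold $X$). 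From Lemma~\ref{lemmacomp} one gets $(\pi^{-1})^*\omega_0\le C\Theta$ and the key comparison $|s|_h^2\,\tr{\omega_Y}{\omega}\le C\,\tr{\Theta}{\omega}$, the analogue of \eqref{oYX}. Finally, the Jacobian computation behind the proof of Lemma~\ref{lemmavomegave} shows $c\,\Omega_Y/|s|_h^{2(n-1)}\le\Theta^n\le C\,\Omega_Y/|s|_h^{2(n-1)}$, and the same proof in fact gives Lemma~\ref{lemmavomegave} with $\alpha=n-1$ (since $C_\ve\Omega_\ve/\Omega_Y\le C/|s|_h^{2(n-1)}$ by the definition of $\Omega_\ve$ and Lemma~\ref{Linfinity}.(iii)); hence $\omega(t)^n/\Theta^n\le C$ on $[T,T']$. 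As established in Section~\ref{sectionafterT}, $\omega(t)$ is a smooth solution of the K\"ahler--Ricci flow on $(Y\setminus\{y_0\})\times[T,T']$, joined smoothly across $t=T$ (via $\pi$) to the flow on $[0,T)$; so Lemma~\ref{lemmaest1} on $[0,T)$ together with Lemma~\ref{lemmack}.(iv), pushed forward, gives
\[
\tr{\omega_Y}{\omega(T)}\le\frac{C}{|s|_h^2},\qquad \tr{\Theta}{\omega(T)}\le\frac{C}{|s|_h^{2(1-\delta_0)}},\qquad \tr{\omega_0}{\omega(T)}\le\frac{C}{|s|_h^{2(1-\delta_0)}}
\]
for some $\delta_0>0$, where $\tr{\omega_0}{\cdot}$ denotes the trace with respect to $(\pi^{-1})^*\omega_0$.

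For small $\sigma>0$ I would apply the maximum principle to
\[
Q_\sigma=\log\!\big(|s|_h^{2(1-\delta_0)}\,\tr{\Theta}{\omega}\big)+A\log\!\big(|s|_h^{2+2\sigma}\,\tr{\omega_Y}{\omega}\big)-A^2\varphi,
\]
with $A$ a large constant, independent of $\sigma\in(0,1]$, fixed below. The weights are chosen exactly so that $Q_\sigma$ is bounded above at $t=T$ uniformly in $\sigma$ (by the data above) and $Q_\sigma(\cdot,t)\to-\infty$ near $y_0$ for each $t$ (by Lemma~\ref{afterT}), so the supremum is attained at an interior point $(x_0,t_0)$ with $x_0\ne y_0$, $t_0>T$. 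Applying \eqref{tr1} to the fixed metrics $\Theta$ and $\omega_Y$, and using $\Delta_\omega\log|s|_h^2=-\tr{\omega}{R(h)}$, $\Delta_\omega\varphi=n-\tr{\omega}{\hat\omega_t}$ and $\dot\varphi=\log(\omega^n/\Omega_Y)$, one gets at $(x_0,t_0)$ an inequality of the form
\[
0\le C\,\tr{\omega}{\Theta}+CA\,\tr{\omega}{\omega_Y}+\lambda\,\tr{\omega}{R(h)}-A^2\log\tfrac{\omega^n}{\Omega_Y}+A^2n-A^2\tr{\omega}{\hat\omega_t},
\]
where $\lambda\ge cA>0$.

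Here is the point that is new relative to Lemma~\ref{lemmaest1}: on $X$ the reference metric was comparable to $\omega_0$, but on $Y$ the \emph{smooth} metric $\hat\omega_t$ does not dominate $\Theta$ near $y_0$, so the term $C\,\tr{\omega}{\Theta}$ cannot be absorbed by $-A^2\tr{\omega}{\hat\omega_t}$ alone. Instead one uses that by \eqref{Rh} the curvature $R(h)=-\ddbar\log|s|_h^2$ is \emph{negative} semidefinite near $y_0$, and that there $\tr{\omega}{\Theta}=\tr{\omega}{\omega_Y}+\ve_0\,\tr{\omega}{(-R(h))}$ with $\tr{\omega}{(-R(h))}\ge0$; the term $\lambda\,\tr{\omega}{R(h)}=-\lambda\,\tr{\omega}{(-R(h))}$ supplies exactly the needed negativity. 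Choosing $A$ so large that $A^2\hat\omega_t\ge(C+CA+1)\omega_Y$ (possible since $\hat\omega_t\ge\frac1C\omega_Y$) and that the coefficient of $\tr{\omega}{(-R(h))}$ is very negative, and bounding the contribution of $R(h)$ away from $y_0$ by $C\,\tr{\omega}{\omega_Y}$ (its positive part being $\le C\omega_Y$ there), one obtains $\tr{\omega}{\omega_Y}+M\tr{\omega}{(-R(h))}+A^2\log(\omega^n/\Omega_Y)\le A^2n$ at $(x_0,t_0)$ for large $M$. Since $\tr{\omega}{\omega_Y}\ge c(\omega^n/\Omega_Y)^{-1/n}$ by arithmetic--geometric means and $\omega_Y^n/\Omega_Y\ge c$, and $y\mapsto cy^{1/n}-A^2\log y$ is bounded below and tends to $+\infty$, this forces $\omega^n/\Omega_Y\ge c'>0$, hence $\tr{\omega}{\omega_Y}(x_0,t_0)\le C$ and $\tr{\omega}{(-R(h))}(x_0,t_0)\le C$, hence $\tr{\omega}{\Theta}(x_0,t_0)\le C$. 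With $\omega^n/\Theta^n\le C$ and the trace--determinant inequality $\tr{\Theta}{\omega}\le\frac{1}{(n-1)!}(\tr{\omega}{\Theta})^{n-1}(\omega^n/\Theta^n)$ this gives $\tr{\Theta}{\omega}(x_0,t_0)\le C$, whence $(|s|_h^2\,\tr{\omega_Y}{\omega})(x_0,t_0)\le C$ and $Q_\sigma(x_0,t_0)\le C$, uniformly in $\sigma$. So $Q_\sigma\le C$ on $[T,T']\times(Y\setminus\{y_0\})$; exponentiating, using $|s|_h^2\,\tr{\omega_Y}{\omega}\le C\,\tr{\Theta}{\omega}$ and boundedness of $\varphi$, and letting $\sigma\to0$ gives (i) up to a loss $2(1-\delta_0)/(A+1)$ in the exponent of $|s|_h$; this loss is removed by re‑running the estimate with the improved bound on $\tr{\Theta}{\omega}$ now available along the flow and the sharp bound $\tr{\omega_Y}{\omega(T)}\le C/|s|_h^2$ at $t=T$, iterating until the exponent is $2$. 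Statement (ii) follows by the same argument, using $(\pi^{-1})^*\omega_0\le C\Theta$ and $\tr{\omega_0}{\omega}\le C\,\tr{\Theta}{\omega}$, exactly as (ii) was deduced from the $\tr{\omega_0}{\omega}$‑bound in Lemma~\ref{lemmaest1}.

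The main obstacle is precisely this absorption near $y_0$: because the K\"ahler--Ricci flow reference metric $\hat\omega_t$ on $Y$ is a fixed smooth metric which is small relative to the (blowing‑up) metric $\omega(t)$ near the exceptional point, the usual device of dominating the second‑order error terms by $-A^2\tr{\omega}{\hat\omega_t}$ fails; one must exploit the favorable sign of $R(h)=-\ddbar\log|s|_h^2$ --- equivalently, work with the singular reference metric $\Theta=(\pi^{-1})^*\omega_X$ --- and control the volume‑form blow‑up $\omega(t)^n\le C\,\Omega_Y/|s|_h^{2(n-1)}$ sharply enough that the trace--determinant inequality still closes the estimate and, after the iteration, produces the optimal exponent $2$ in (i).
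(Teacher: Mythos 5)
Your route is genuinely different from the paper's: you run the maximum principle directly on $(Y\setminus\{y_0\})\times[T,T']$ against the singular comparison metric $\Theta=(\pi^{-1})^*\omega_X$, using the sign of $R(h)$ to make up for the lack of positivity of $\hat\omega_{t,Y}$ near $y_0$, whereas the paper avoids the noncompact setting altogether: it approximates the flow on $Y$ by a family of Monge--Amp\`ere flows (\ref{maX1}) \emph{upstairs on $X$}, with reference forms $\pi^*(\hat\omega_{t,Y}-\frac{\ve}{T}\omega_Y)+\frac{\ve}{T}\omega_0$ (K\"ahler on the compact manifold $X$), degenerating densities $\pi^*f_\ve$ coming from the Jacobian of $\pi$, and initial data $\varphi(T-\ve)$, so that the argument of Lemma \ref{lemmaest1} applies with only ``minor modifications'' and with time-$T$ bounds that are automatically $\ve$-uniform (they are exactly the bounds of Lemma \ref{lemmaest1} at time $T-\ve$); the limit is then identified with the flow on $Y$ by a uniqueness theorem. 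Unfortunately your direct argument has two genuine gaps. First, the maximum principle is not justified: you claim $Q_\sigma(\cdot,t)\to-\infty$ near $y_0$ ``by Lemma \ref{afterT}'', but that lemma only gives $\omega\le C|s|_h^{-2\alpha}\omega_Y$ for some unspecified, possibly large $\alpha$, which your fixed weights $|s|_h^{2+2\sigma}$ and $|s|_h^{2(1-\delta_0)}$ cannot compensate; smoothness of $\omega(t)$ across $y_0$ for each fixed $t>T$ does give the decay for fixed $t$, but not uniformly as $t\downarrow T$, so the supremum over the noncompact space-time region may be approached at the corner $(y_0,T)$ and never attained. Repairing this by working with the approximants $\omega_\ve(t)$ of Section \ref{sectionafterT} would require the \emph{sharp}, $\ve$-uniform bounds $\tr{\omega_Y}{\omega_{T,\ve}}\le C|s|_h^{-2}$ and $\tr{\Theta}{\omega_{T,\ve}}\le C|s|_h^{-2(1-\delta_0)}$ at $t=T$, which Lemma \ref{lemmabdpsi} does not provide and which you do not prove.

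Second, the exponent in (i): even granting the maximum principle, your choice of companion term forces a loss. In Lemma \ref{lemmaest1} the clean exponent $2$ comes out because the companion $\log\tr{\omega_0}{\omega}$ carries \emph{no} weight (it is bounded at $t=0$ since $\omega(0)=\omega_0$), and (\ref{oYX}) converts it into another copy of $\log(|s|_h^2\tr{\pi^*\omega_Y}{\omega})$ with the same clean weight. On $Y$ the initial data at $t=T$ only satisfies $\tr{\Theta}{\omega(T)}\le C|s|_h^{-2(1-\delta_0)}$, so you are forced to weight the companion by $|s|_h^{2(1-\delta_0)}$, and, as you compute, the conclusion is $\tr{\omega_Y}{\omega}\le C|s|_h^{-2-2(1-\delta_0)/(A+1)}$. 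Your proposal to ``iterate until the exponent is $2$'' does not close: the weight on the companion is dictated by the time-$T$ bound, which is fixed (it comes from Lemma \ref{lemmaest1}(ii) as $t\to T^-$ and never improves along the iteration), so each pass reproduces the same loss $2(1-\delta_0)/(A+1)$; taking $A$ large only gives $2+\ve$ for every $\ve>0$ with constants depending on $\ve$, not the stated estimate (i). (Your subsidiary claims --- negativity of $R(h)$ near $y_0$, the volume bound $\omega^n\le C\Omega_Y|s|_h^{-2(n-1)}$ via Lemma \ref{lemmavomegave} with $\alpha=n-1$, and the comparison $\Theta^n\sim\Omega_Y|s|_h^{-2(n-1)}$ --- are correct, but they do not repair these two points.) The paper's device of lifting the $\ve$-flows to $X$ is precisely what removes both difficulties at once, at the cost of handling the $f_\ve$ terms and proving a uniqueness statement identifying the limit with the flow of Proposition \ref{propst}.
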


\begin{proof}  Since the proof involves is similar to the arguments given in the earlier sections, we give here just an outline of the proof.  For more details, we refer the reader to \cite{SW2}, where this result is proved in a more general setting.

We prove the existence of a solution $\tilde{\rho}$ of the parabolic complex Monge-Amp\`ere equation
\begin{equation} \label{6maY}
\ddt{\tilde{\rho}} = \log \frac{ ( \hat{\omega}_{t,Y} + \ddbar \tilde{\rho} )^n }{ \Omega_Y}, ~~~\tilde{\rho} |_{t=T}= \psi_T,
\end{equation}
on $[T, T'] \times (Y \setminus \{ y_0 \})$ by considering a family of Monge-Amp\`ere flows on $X$.   First, let $f_{\ve}$ be a family of positive smooth functions on $Y$ of the form
\begin{equation}
f_{\ve} (z) = (\ve + r^2)^{n-1}, \quad \textrm{on } D,
\end{equation}
which converge to a function $f$ which is of the form $f(z) = r^{2(n-1)}$ on the unit ball $D$ and is positive on $X \setminus D$.  By the definition of the blow-down map there is a smooth volume form $\Omega_X$ on $X$ with $\pi^* \Omega_Y = (\pi^* f) \Omega_X$.

 Note that if $\ve>0$ is sufficiently small then $\hat{\omega}_{t,Y} - \frac{\ve}{T} \omega_Y$ is K\"ahler on $Y$ for  $t \in [T,T']$.  We consider for $\ve>0$ sufficiently small
 the following family of  Monge-Amp\`ere flows on X:
\begin{equation} \label{maX1}
\ddt{\rho_{\ve}} = \log \frac{(  \pi^* (\hat{\omega}_{t,Y}- \frac{\ve}{T} \omega_Y) + \frac{\ve}{T} \omega_0  + \ddbar \rho_{\ve})^n}{(\pi^*f_{\ve}) \Omega_X }, ~~~ \rho_{\ve} |_{t=T} = \varphi(T-\ve).
\end{equation}
Observe that at $t=T$, the (1,1) form $\pi^* (\hat{\omega}_{t,Y}- \frac{\ve}{T} \omega_Y) + \frac{\ve}{T} \omega_0$ is equal to $\hat{\omega}_{T-\ve}$.

A standard argument gives an uniform  bound for $| \rho_{\ve}|$ independent of $\ve$.  Making minor modifications to the proof of Lemma \ref{lemmaest1} to deal with the extra terms coming from $f_{\ve}$ we obtain the estimates 
\begin{equation}
\omega_{\ve} \le \frac{C}{|s|^2_h} \pi^* \omega_Y \quad \textrm{and} \quad \omega_{\ve} \le \frac{C}{|s|_h^{2(1-\delta)}} \omega_0, \quad \textrm{ on } (X\setminus E) \times [T, T']
\end{equation}
for $\omega_{\ve} =  \pi^* (\hat{\omega}_{t,Y}- \frac{\ve}{T} \omega_Y) + \frac{\ve}{T} \omega_0 +  \ddbar \rho_{\ve}$
and $C^{\infty}$ estimates for $\omega_{\ve}$ on compact subsets away from $E$.  Letting $\ve \rightarrow 0$ and pushing forward to $Y$ we get a smooth solution $\tilde{\rho}$ of (\ref{6maY}) on $[T, T'] \times (Y \setminus \{ y_0 \})$ with $\hat{\omega}_{t, Y} + \ddbar \tilde{\rho}$ satisfying the estimates (i) and (ii).  

On the other hand, $\tilde{\rho}$ is equal to the solution $\varphi$ on $Y$ we constructed in Proposition \ref{propst} (this follows from a slightly more general uniqueness theorem which can be proved similarly).  Hence (i) and (ii) hold for $\omega(t)$ as required.
\qed
\end{proof}

It then follows by the arguments of Section \ref{sectionGH} that:

\begin{theorem}
$(Y, \omega(t))$ converges in the Gromov-Hausdorff sense to $(Y, d_T)$ as $t\rightarrow T^+$.
\end{theorem}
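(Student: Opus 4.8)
The plan is to repeat the Gromov--Hausdorff argument of Section \ref{sectionGH} almost verbatim, now with $X$ replaced by $Y$ and the blow-down map $\pi$ replaced by the identity $\mathrm{id}_Y \colon Y \to Y$; Proposition \ref{extendest} plays the role that Lemma \ref{lemmaest1} played there. Two ingredients are needed. First, that $\omega(t)$ converges to $g_T$ in $C^\infty$ on compact subsets of $Y \setminus \{ y_0 \}$ as $t \to T^+$: this is immediate from Proposition \ref{propst}, since $\varphi(t) \to \psi_T$ in $C^\infty_{\mathrm{loc}}(Y \setminus \{ y_0 \})$ forces $\omega(t) = \hat\omega_t + \ddbar \varphi(t) \to \omega_Y + \ddbar \psi_T = \omega'$, and $\omega'$ is exactly the K\"ahler form $\omega_T$ on $Y \setminus \{ y_0 \}$ associated to $g_T$. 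Second, a uniform smallness statement near $y_0$: for every $\ve > 0$ there is $\delta_0 > 0$ with $\operatorname{diam}_{\omega(t)} \overline{D_{\delta_0}} < \ve$ for all $t \in [T, T']$ and $\operatorname{diam}_{d_T} \overline{D_{\delta_0}} < \ve$.

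For the second ingredient, the bound on $\operatorname{diam}_{d_T} D_{\delta_0}$ is already (\ref{smallballdT}) of Lemma \ref{diam} (and then also on $\overline{D_{\delta_0}}$ after shrinking $\delta_0$, by continuity of $d_T$). For $\operatorname{diam}_{\omega(t)} \overline{D_{\delta_0}}$ I would argue as in Lemma \ref{lemmakey}(ii): by Proposition \ref{extendest}(ii) one has $\omega(t) \le C |s|_h^{-2(1-\delta)} (\pi|_{X\setminus E}^{-1})^* \omega_0$ on $(Y \setminus \{ y_0 \}) \times [T, T']$, and a radial tangent vector at a point $x \in D_{1/2} \setminus \{ 0 \}$ has $(\pi|_{X\setminus E}^{-1})^* \omega_0$-length comparable to $|x|$ (the observation from the proof of Lemma \ref{lemmaradial}, which follows from the explicit formula (\ref{omegaX})). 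Hence the $\omega(t)$-length of the radial path $\lambda \mapsto \lambda x$, $\lambda \in (0,1]$, is at most $C \int_0^1 \lambda^{-(1-\delta)} |x|^{\delta}\, d\lambda = C' |x|^{\delta}$, uniformly in $t \in [T, T']$; in particular $d_{\omega(t)}(x, y_0) \le C' |x|^{\delta}$, so $\operatorname{diam}_{\omega(t)} \overline{D_{\delta_0}} \le 2 C' \delta_0^{\delta}$ and it suffices to take $\delta_0$ small. (If the sharper exponent $|x|^{1/2}$ is wanted, one first proves the evident analogue of Lemma \ref{lemmaradial} on $[T, T']$, but this is not needed here.)

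With these two ingredients the Gromov--Hausdorff convergence follows exactly as in Section \ref{sectionGH}. Fixing $\ve > 0$ and choosing $\delta_0$ as above, one takes $F = G = \mathrm{id}_Y$, so that (\ref{GH1})--(\ref{GH2}) and their symmetric counterparts all reduce to the single estimate
\begin{equation}
\left| d_{\omega(t)}(y_1, y_2) - d_T(y_1, y_2) \right| \le C \ve \qquad \text{for all } y_1, y_2 \in Y,
\end{equation}
valid for $t$ close enough to $T$. When $y_1, y_2 \in \overline{D_{\delta_0}}$ this is immediate from the diameter bounds and the triangle inequality, and the case of exactly one point in $\overline{D_{\delta_0}}$ reduces to $y_1, y_2 \notin D_{\delta_0}$. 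In that case one takes a near-minimizing path for one of the two metrics joining $y_1$ to $y_2$, deletes the sub-arc between its first entry into and last exit from $\overline{D_{\delta_0}}$, replaces it by a path of length $< \ve$ in the other metric (using the diameter bounds), and estimates the two surviving sub-arcs --- which lie in a fixed compact subset of $Y \setminus \{ y_0 \}$ --- using the $C^\infty$ convergence $\omega(t) \to g_T$; carrying this out in both directions yields the displayed inequality, just as in (\ref{g1})--(\ref{ghx}). This gives $d_{\mathrm{GH}}\big( (Y, d_{\omega(t)}), (Y, d_T) \big) \to 0$ as $t \to T^+$.

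The only point not already present in Section \ref{sectionGH} is the uniform-in-$t$ radial length bound of the second ingredient on the whole interval $[T, T']$, which I expect to be the main (and only) thing requiring care: it relies on Proposition \ref{extendest} being valid up to and including $t = T$, together with the radial-direction estimate borrowed from the proof of Lemma \ref{lemmaradial}. Once Proposition \ref{extendest} is granted, the distance comparison is a line-by-line transcription of the earlier argument, made simpler by $F$ and $G$ now being the identity.
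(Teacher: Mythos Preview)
Your proposal is correct and follows exactly the approach the paper takes: the paper's own proof is the single sentence ``It then follows by the arguments of Section~\ref{sectionGH}'', and you have correctly filled in those details, using Proposition~\ref{extendest} in place of Lemma~\ref{lemmaest1} and noting that $F=G=\mathrm{id}_Y$ simplifies the comparison. One small point: the $C^\infty_{\mathrm{loc}}(Y\setminus\{y_0\})$ convergence of $\omega(t)$ as $t\to T^+$ is supplied by Lemma~\ref{afterT} (equivalently, the smoothness theorem at the end of Section~\ref{sectionafterT}) rather than directly by Proposition~\ref{propst}, which only asserts $C^0$ regularity at $t=T$.
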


This completes the proof of Theorem \ref{thmblowup}.

\section{The K\"ahler-Ricci flow for algebraic surfaces}  \label{sectionapply}

In this subsection, we give the proof of Theorem \ref{theoremclass}.  The argument is purely algebraic, and follows a line of reasoning well-known to experts in the field.  However, for the sake of completeness, we provide here some details of the proof.
We first recall some basic definitions from algebraic geometry.

Let $L$ be a holomorphic line bundle over an algebraic variety $X$ of dimension $n$.  We say  that $L$ is \emph{nef} if $L\cdot C := \int_C c_1(L)  \ge 0$ for all irreducible curves $C$ in $X$.   If $L$ is nef then we say that $L$ is \emph{big} if $L^n := \int_X c_1(L)^n>0$.
Finally $L$ is \emph{semi-ample} if there exists an integer $m>0$ such that the line bundle $L^m$ is globally generated (meaning that for each $x \in X$, there exists a holomorphic section $s$ of $L^m$ with $s(x)\neq 0$).
The definitions of nef and big naturally extend to formal linear $\mathbb{R}$ combinations of line bundles.

We impose the condition now that the cohomology class of $[\omega_0]$ is rational.  Then  we may assume without loss of generality that $[\omega_0] = c_1(L)$ where $L$ is an ample line bundle over $X$.
Define  $\tilde{T}$ by
\begin{equation}
\tilde{T} = \sup \{ t \in \mathbb{R} \ | \ L+tK_X  \ \textrm{is nef} \, \}.
\end{equation}
Then we have the following lemma.

\begin{lemma}  Assume that $\tilde{T}<\infty$.  Then:
\begin{enumerate}
\item[(i)] $\tilde{T}$ is rational.
\item[(ii)] For any integer $r \ge 1$ such that $r\tilde{T} \in \mathbb{Z}$, the line bundle $M=r(L+\tilde{T}K_X)$ is  nef and semi-ample.
\item[(iii)] $\tilde{T}= T$, where $T$ is given by (\ref{T}).
\end{enumerate}
\end{lemma}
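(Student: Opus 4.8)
The plan is to derive all three parts from the geometry of the nef cone of the surface $X$, using two standard theorems of Mori theory: the Cone Theorem and the Base-Point-Free Theorem. First I would record the elementary observation that $\{t \in \mathbb{R}_{\ge 0} : L + tK_X \text{ is nef}\}$ is a closed interval — closed because the nef cone is closed, convex because any convex combination of nef classes is nef — and, since $\tilde T<\infty$ while $L$ is ample (so $L + tK_X$ is ample, hence nef, for small $t>0$), this interval is exactly $[0,\tilde T]$. In particular $L + \tilde T K_X$ is itself nef, and it cannot be ample, for otherwise by openness of the ample cone $L + tK_X$ would remain ample for $t$ slightly larger than $\tilde T$, contradicting the definition of $\tilde T$. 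Hence there is a nonzero class $z$ in the closure $\overline{NE}(X)$ of the cone of effective curves with $(L + \tilde T K_X)\cdot z = 0$; since $L$ is ample and $\tilde T>0$, this forces $K_X\cdot z<0$, so $z$ lies in the $K_X$-negative part of the Mori cone.

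For part (i) I would invoke the Cone Theorem for surfaces: $\overline{NE}(X) = \overline{NE}(X)_{K_X\ge 0} + \sum_i \mathbb{R}_{\ge 0}[C_i]$, where the $C_i$ are rational curves with $0 < -K_X\cdot C_i \le 3$. Writing $z = w + \sum_i a_i[C_i]$ accordingly ($w\in \overline{NE}(X)_{K_X\ge 0}$, $a_i\ge 0$) and pairing with the nef class $L + \tilde T K_X$: every summand is $\ge 0$ and the sum is $0$, so each summand vanishes; because $K_X\cdot z<0$ while $K_X\cdot w\ge 0$, some $C_i$ has $a_i>0$ and $K_X\cdot C_i<0$, and then $(L + \tilde T K_X)\cdot C_i = 0$ yields $\tilde T = (L\cdot C_i)/(-K_X\cdot C_i)\in\mathbb{Q}$. (Alternatively one may simply quote Kawamata's Rationality Theorem.) This finiteness/discreteness statement for the $K_X$-negative extremal rays is really the only nontrivial input; the rest is formal manipulation of cones.

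For part (iii), $T\le\tilde T$ is trivial since ample classes are nef. Conversely, for any real $t<\tilde T$ one writes
\begin{equation}
L + tK_X = \tfrac{t}{\tilde T}\bigl(L + \tilde T K_X\bigr) + \bigl(1 - \tfrac{t}{\tilde T}\bigr)L,
\end{equation}
a sum of a nef class and an ample class, hence ample; so $L + tK_X$ is a K\"ahler class for every $t<\tilde T$, giving $\tilde T\le T$ and therefore $\tilde T = T$.

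For part (ii), nefness of $M = r(L + \tilde T K_X)$ is immediate from part (i), and $M$ is an honest line bundle because $r\tilde T\in\mathbb{Z}$. For semi-ampleness I would apply the Base-Point-Free Theorem to the nef $\mathbb{Q}$-divisor $D = L + \tilde T K_X$ on the smooth surface $X$: for any $a>1/\tilde T$,
\begin{equation}
aD - K_X = \bigl(a - \tfrac{1}{\tilde T}\bigr)D + \tfrac{1}{\tilde T}L
\end{equation}
is a nef class plus an ample class, hence ample, in particular nef and big, so $|mD|$ is base-point-free for all sufficiently divisible $m$. Since $M = rD$, some tensor power of $M$ is globally generated, i.e. $M$ is semi-ample. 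I expect the only genuine obstacle to be part (i); in part (ii) one should note that the Base-Point-Free Theorem (with its $aD-K_X$ hypothesis) is really needed, since "nef and big $\Rightarrow$ semi-ample" fails on surfaces in general.
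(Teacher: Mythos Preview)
Your proposal is correct and follows the same overall strategy as the paper: part (i) via the Rationality Theorem (the paper simply cites Kawamata--Shokurov, whereas you additionally sketch a derivation from the Cone Theorem), part (ii) via Kawamata's Base-Point-Free Theorem applied to $D = L + \tilde T K_X$, and part (iii) via the fact that a nef class plus an ample class is ample. Your argument for (iii) is in fact slightly cleaner than the paper's: you write $L + tK_X$ directly as a positive combination of the nef class $L + \tilde T K_X$ and the ample class $L$, whereas the paper argues by contradiction and appeals to Nakai--Moishezon for the same ``nef $+$ ample $=$ ample'' step.
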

\begin{proof}
Part (i) is a direct consequence of the Rationality Theorem of Kawamata and Shokurov (see \cite{KMM, KM}).

The Base-Point-Free Theorem of Kawamata \cite{Ka} states that if $L$ is a nef line bundle over a projective manifold such that $aL-K_X$ is nef and big for some $a>0$ then $L$ is semi-ample.  Let $T$ be given by (\ref{T}).  Then $T \le \tilde{T}$ and $L+(T-\varepsilon)K_X$ is nef and big for $\varepsilon>0$ sufficiently small.
  Since $L+\tilde{T}K_X$ is nef it follows that $M=r(L+\tilde{T}K_X)$ is semi-ample, giving (ii).

Finally we show that $T=\tilde{T}$.    Suppose for a contradiction that $T<\tilde{T}$.  Then for any $T_0 \in (T, \tilde{T})$,  $L+ T_0 K_X$ is nef.  But if $N$ is any nef line bundle and $A$ any ample line bundle then $kN+  A$ is ample for $k \ge 0$ by the Nakai-Moishezon criterion (see for example Proposition 6.2 of \cite{De}).  Thus $(1+\delta) L + T_0 K_X$ is ample for $\delta>0$ sufficiently small.  This contradicts the definition of $T$.
\qed
\end{proof}

We now give the proof of Theorem \ref{theoremclass}.   The behavior of the flow at a singular time $T<\infty$ depends on whether the line bundle $M$ from the lemma above is big or not.

We show that if  $M$ is big then the K\"ahler-Ricci flow performs a canonical surgical contraction at $T$ with respect to a finite number of disjoint exceptional curves.  Since $M$ is big, nef and semi-ample, applying Theorem 2.1.27 in \cite{L} we obtain a map $\pi : X \rightarrow Y$ where $Y$ is a normal  projective variety of complex dimension two.  The exceptional locus of $\pi$ is a sum of irreducible curves $C=\sum_i  C_i$ with $C_i \cdot M=0$ and hence $K_X \cdot C_i<0$.  Since $M^2>0$ we can apply the index theorem (see for example \cite{GH}, page 471) to see that $C_i^2 < 0$, and hence  by the adjunction formula, $C_i$ is a smooth rational curve with $C_i^2 =-1$.  Moreover, the curves $C_i$ are disjoint.  Indeed, suppose distinct irreducible curves $C_1$ and $C_2$ satisfy $C_1 \cdot M =0 = C_2 \cdot M$ and $C_1^2=-1=C_2^2$.  Then by the index theorem again, $(C_1 +C_2)^2 < 0$ and hence $C_1 \cdot C_2=0$.

 Thus $\pi$ is a map blowing down the exceptional curves $C_i$ (see for example Theorem 4.2 of \cite{BHPV}) and $Y$ is smooth. Since $M$ is the pull-back of an ample line bundle over $Y$, we obtain (\ref{assumption1}) for some K\"ahler metric $\omega_Y$ on $Y$.  Thus we can apply Theorem \ref{thmblowup} to see that the K\"ahler-Ricci flow performs a contraction with respect to a finite number of disjoint exceptional curves.  

On the other hand, if the line bundle $M$ is not big then $c_1(M)^2=0$ and hence
\begin{equation} \label{collapse}
\textrm{Vol}_{\omega(t)}X = \int_X \omega(t)^2 \rightarrow c_1(M)^2 =0, \quad \textrm{as} \quad t\rightarrow T^-.
\end{equation}
In this case, we cannot have a canonical surgical contraction.

In this way we obtain a sequence of canonical surgical contractions $g(t)$ on the manifolds $X_0=X, X_1,  X_2, \ldots, X_k$ on the time intervals $[0,T_0), (T_1, T_2), \ldots, (T_{k-1}, T_k)$  of the type described above.  We define  $M_0, \ldots, M_k$ by $M_0=L$ and $M_i = M_{i-1} + T_i K_{X_i}$ for $i=1, \ldots, k$.  Then the surgical contractions end when  either $T_k=\infty$ or  $M_k$ on $X_k$ is not big, in which case we have (\ref{collapse}).  We can only have a finite number of canonical surgical contractions, since with each one the second Betti number strictly decreases.

We now  show that in the case $T_k<\infty$, the manifold $X_k$ is either Fano (that is, $K^{-1}_{X_k}$ is ample) or a ruled surface.   Since $M_k$  is not big we can apply Theorem 2.1.27 in \cite{L} to see that for $m$ sufficiently large, the morphism
\begin{equation}
\pi: X_k \rightarrow Y_m \subset \mathbb{P} (H^0(X_k, M_k^m)),
\end{equation}
has image $Y=Y_m$ either a point or a normal (hence nonsingular) curve and a multiple of $M_k$ over $X_k$ is the pull-back of an ample line bundle on $Y$.  If $Y$ is a point then $K_{X_k}<0$ and thus $X_k$ is Fano.  If $Y$ is a non-singular curve then a generic fiber $C$ of $\pi$ is a smooth curve satisfying $M_k\cdot C =0$ and hence $K_{X_k} \cdot C<0$.  It follows from the adjunction formula  that $C$ is a rational curve.  Hence $X_k$ admits a ruled structure over $Y$.   

If $T_{k}=\infty$ then for some ample line bundle $A$ we have that $A+tK_{X_k}$ is nef for all $t$ and hence $K_{X_k}$ is nef.  
It follows that $X_k$ contains no exceptional  curves of the first kind since from Proposition 2.2 in \cite{BHPV}, such a curve $C$ must have $K_X \cdot C<0$, whereas we have just shown that $K_X$ is nef.    This completes the proof of Theorem \ref{theoremclass}.

\begin{remark} \label{negkod}
In case (ii), since $K_X$ is nef, $X$ must have nonnegative Kodaira dimension \cite{BHPV} and in this case there is a unique minimal model of $X$.
\end{remark}

\bigskip

\bigskip
\noindent
{\bf Acknowledgements} \ The authors express their gratitude to D.H. Phong for his support and advice. The first-named author also thanks G. Tian for many helpful discussions.   In addition, the authors are grateful to G. Sz\'ekelyhidi, V. Tosatti and Y. Zhang for a number of very useful conversations.   The first-named author carried out part of this work while visiting Columbia University, and thanks the department for its kind hospitality.

\small

\bigskip
\bigskip

$^{*}$ Department of Mathematics \\
Rutgers University, Piscataway, NJ 08854\\

$^{\dagger}$ Department of Mathematics \\
University of California San Diego, La Jolla, CA 92093

\end{document}